\newcommand{\lowerromannumeral}[1]{\romannumeral#1\relax}
\theoremstyle{plain}
\newtheorem{thmx}{Theorem} 
\newtheorem{thm}{Theorem}[section]  
\newtheorem{cor}[thm]{{Corollary}} 
\newtheorem{corx}[thmx]{Corollary}
\newtheorem{lem}[thm]{{Lemma}}
\newtheorem{prop}[thm]{Proposition}
\newtheorem{conj}[thm]{{Conjecture}}
\newtheorem{ques}[thm]{{Question}}
\newtheorem{defi}[thm]{Definition}
\newtheorem*{claim}{Claim}
\theoremstyle{remark}
\newtheorem{rmk}[thm]{Remark}
\numberwithin{equation}{section}
\def\C{\mathbb C}
\def\log{\mathrm{log}\,}
\def\d{\mathrm{d}}
\def\ep{\varepsilon} 
\def\hess{\mathrm{d}\mathrm{d}^c}
\def\sn{\sqrt{-1}}
\def\bD{\mathbb{D}}
\begin{document} 
\title[Picard theorems for moduli of polarized varieties]{Picard theorems for moduli spaces of polarized varieties} 
	\author{Ya Deng}
	 \address{
		Institut des Hautes \'Etudes Scientifiques,
		Universit\'e  Paris-Saclay, 35 route de Chartres, 91440, Bures-sur-Yvette, France}
	\email{deng@ihes.fr}
	 
	\author{Steven Lu}
		 \address{D\'epartement de math\'ematiques
		 	Universit\'edu Qu\'ebec \`a Montr\'eal
		 	Case postale 8888, succursale centre-ville
		 	Montréal (Québec) H3C 3P8}
	\email{lu.steven@uqam.ca}
	\author{Ruiran Sun}
		 \address{Institut fur Mathematik, Universit\"at Mainz, Mainz, 55099, Germany}
	\email{ruirasun@uni-mainz.de}
        \author{Kang Zuo}
        	 \address{Institut fur Mathematik, Universit\"at Mainz, Mainz, 55099, Germany}
        \email{zuok@uni-mainz.de}

\begin{abstract} {As a result of our study of the hyperbolicity of the moduli space of polarized manifold,
we give a general big Picard theorem for a holomorphic curve on a log-smooth pair $(X,D)$ such that $W=X\setminus D$ admits a Finsler pseudometric that is strongly negatively curved when pulled back to the curve. We show, by some refinements of the classical Viehweg-Zuo construction, that this latter condition holds for the base space $W$, if nonsingular, of any algebraic family of polarized complex projective manifolds with semi-ample canonical bundles whose induced moduli map $\phi$ to the moduli space of such manifolds is generically finite and any $\phi$-horizontal holomorphic curve in $W$. This yields the big Picard theorem for any holomorphic curves in the base space $U$ of such an algebraic family by allowing this base space to be singular but with generically finite moduli map. An immediate and useful corollary is that any holomorphic map from an algebraic variety to such a base space $U$ must be algebraic, i.e., the corresponding holomorphic family must be algebraic. We also show the related algebraic hyperbolicity property of such a base space $U$, {which generalizes previous Arakelov inequalities and weak boundedness results for moduli stacks and offers, in addition to the Picard theorem above, another evidence in favor of the hyperbolic embeddability of such an $U$.}}
\end{abstract}

\subjclass[2010]{32Q45, 32A22, 53C60}
\keywords{big Picard theorem, logarithmic derivative lemma, Higgs bundles, negatively curved Finsler metric,  moduli of polarized manifolds}

\maketitle
\tableofcontents 
\section{Introduction}
{The paper of Viehweg and Zuo \cite{VZ-1} pioneered the investigation of  the analytic hyperbolicity of the moduli space of polarized varieties by showing that, for any algebraic family of canonically polarized complex manifolds with quasi-finite moduli map to the moduli space, the base space $U$ of the family is Brody hyperbolic, i.e., $U$ admits no non-constant holomorphic maps from $\mathbb C$.\footnote{The term ``moduli space'' used above should more properly be written as ``moduli stack'' though we will not belabour this point in this paper. Hence hyperbolicity of moduli space is taken to mean in the sense of stacks. } \\[0.5mm] 
\indent
For compact spaces, Brody hyperbolicity is equivalent by Brody's reparametrization theorem to being hyperbolic in the sense of Kobayashi. However, most parameter spaces considered above are not compact. In the non-compact setting, the strongest notion of complex hyperbolicity (short of the existence of a hyperbolic compactification) is that of hyperbolic embeddability, meaning that there exists a compactification in which the space hyperbolically embeds.\footnote{Hyperbolic embedding characterizes the Bailey-Borel compactifications of the moduli spaces of abelian varieties, a classical theorem of Borel. Since we are mainly interested in the more algebraic aspects of hyperbolicity in this paper, we do not give the definition of Kobayashi hyperbolicity, nor of hyperbolic embedding. The interested reader should consult standard references such as the books of Lang and of Kobayashi on complex hyperbolic geometry.}\\[0.5mm] 
\indent
This paper offers two evidences of this hyperbolic embeddability in the moduli setting: we obtain for the said parameter spaces that (I) the Picard extension property of holomorphic maps from curves to them holds, generalizing the Brody hyperbolicity result of \cite{VZ-1}, and that (II) they are algebraically hyperbolic in the sense of Demailly, see Definition~\ref{def:DC}. In fact, we generalize these results, after a suitable recall and recasting of the construction in the above paper of Viehweg and Zuo, from the setting of canonically polarized manifolds to the setting of manifolds with semi-ample canonical bundle. 
It is fitting to remark here that hyperbolic embeddability generalizes Kobayashi hyperbolicity and that property (I) and (II) are implied by it though the converses corresponding to (I) and (II) are open, see e.g. \cite[II\S 2]{Lang}, \cite[Theorem~(6.3.7)]{Kobayashi} and \cite{PR07}. 
}\\[-3.7mm]

More precisely, {consider an algebraic family of polarized complex projective manifolds with semi-ample canonical divisors and Hilbert polynomial $h$ given by {an equivalence class of pairs} $(f:\, V \to U,\mathcal{L})\in \mathcal{M}_h(U)$, i.e. $f$ is a proper and smooth morphism between algebraic varieties $V$ and $U$ with a relatively ample line bundle $\mathcal{L}$ over $V$ having $h$ as the fibrewise Hilbert polynomial {(modulo the obvious equivalence)}}. Suppose that the induced map from the parameter space $U$ to the coarse moduli space $M_h$, {also called the} moduli map, is quasi-finite. Let $\bar U$ be an algebraic  compactification of $U$, {$C\subset \bar C$ an open subset of a complex curve with codimension one complement and $\gamma: C \to U$ a holomorphic map.  We show that $\gamma$ has a holomorphic extension $\bar\gamma: \bar C \to \bar U$ and, as a well-known consequence, that {the result generalizes} to the case when $C$ is replaced by a complex space and holomorphic by  the word meromorphic, obtaining in particular that $\gamma$ is algebraic when $C$ is an algebraic variety.} Our proof of this proceeds in two major steps, comprising the two major parts of substance of the paper, the first extracts the necessary curvature conditions via a detailed recall {and some necessary extensions} of the Viehweg-Zuo paper \cite{VZ-1} while the second uses Nevanlinna theory to prove our key technical theorem of independent interest, {Theorem~\ref{thm:Big Picard}.} 
We offer two proofs of this technical theorem. {The first offers a simplification and strengthening of some techniques developed by and used in Griffiths-King \cite{Gri-King-73} to generalize a classical big Picard theorem in the setting of moduli spaces and follows a metric approach to Nevanlinna theory as espoused by Chern and  Ahlfors.  The second is inspired by the proof of the fundamental vanishing theorem of Siu-Yeung and Demailly (\cite{S-Y97, Dem97b}) for jet-differentials on holomorphic curves and proceeds by a reduction to the  logarithmic derivative lemma. {As a natural consequence of the first part of the paper},} we show in the last section the algebraically hyperbolicity of $U$,  generalizing previous Arakelov inequalities and weak boundedness results of moduli stacks (see Remark \ref{rem:Arakelov}). 
\\[-3.5mm]

{The Shafarevich problem from the 60's and its higher dimensional generalizations (cf.\ \cite{Vieh-01,Kov-03} for an introduction), motivated the very extensive modern study of the hyperbolicity properties of the moduli space of polarized manifolds with many major advances. We will only touch upon a couple of the recent avances that generalize the result of \cite{VZ-1} below.}\\[-3.5mm]
%

{By refining a part of Viehweg-Zuo's construction of negatively curved pseudometrics,  Popa-Taji-Wu in \cite{PTW-18} proved the Brody hyperbolicity for the case of the moduli space of polarized manifold with nef and big canonical bundle.} {In \cite{To-Yeung-15} and \cite{To-Yeung-18}, To and Yeung proved that the moduli spaces of canonically polarized and  Ricci flat manifolds are Kobayashi hyperbolic by constructing negatively curved Finsler metrics on $U$. This is stronger than Brody hyperbolicity for a noncompact $U$, but still weaker than having a hyperbolic embedding (cf. \cite[Chapter~2]{Lang} or {\cite[Chapter~3, \S3]{Kobayashi}}). More recently, the first named author in \cite{Deng-18b} proved in addition that the moduli space of polarized varieties whose canonical divisors are semi ample and big is Kobayashi hyperbolic and that the moduli space without the bigness condition is Brody hyperbolic, both of which were conjectured by Viehweg and the fourth named author (\cite[Question 0.2]{VZ-1}). This was done via negatively curved Finsler metrics on $U$ 
obtained by adding, at the end of the Viehweg-Zuo construction of Finsler pseudo-metrics in \cite{VZ-1}, two natural ingredients in the hyperbolic context: a pointwise argument plus a convex linear combination of components of the Viehweg-Zuo Finsler pseudo-metrics. {This allowed the Viehweg-Zuo construction to extend to the semi-ample case and and is adapted here for some necessary improvements of the Viehweg-Zuo construction in this paper.}}  \\[-3.5mm] 

{Another important motivation for our current investigation comes from the fact that many well-studied moduli spaces such as the moduli space of  abelian varieties suitably rigidified and that of marked K3 surfaces are actually locally symmetric of non-compact type and such locally symmetric varieties admit the famous Baily-Borel compactifications (cf.\cite{Baily-Borel}).  Borel showed in \cite{Borel-72} that, given such a locally symmetric variety, this compactification yields a hyperbolic embedding of the variety. In particular, the big Picard theorem holds on these moduli spaces.}

\begin{thm}[Borel]\label{borel}
Let $X$ be a torsion-free arithmetic quotient of a bounded symmetric domain. Denote by $X^*$ the Baily-Borel compactification of $X$. Then $X$ is hyperbolically embedded in $X^*$.  
\end{thm}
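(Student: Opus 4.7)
The plan is to reduce Theorem \ref{borel} to the big Picard extension property via the Kiernan--Kobayashi characterization of hyperbolic embeddings (as sketched in the earlier discussion of the K$^2$ theorem), namely that it suffices to show that every holomorphic map $\gamma:\mathbb{D}^*\to X$ extends to a holomorphic map $\bar\gamma:\mathbb{D}\to X^*$. Write $X=\Gamma\backslash\Omega$ for a torsion-free arithmetic lattice $\Gamma$ acting on the bounded symmetric domain $\Omega$. The Bergman metric on $\Omega$ descends to a complete K\"ahler metric on $X$ with holomorphic sectional curvature bounded above by some $-\kappa<0$, and the Ahlfors--Schwarz lemma then makes $\gamma$ distance-decreasing from the Poincar\'e metric on $\mathbb{D}^*$.

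First I would lift $\gamma$ through the universal cover $\pi:\mathbb{H}\to\mathbb{D}^*$, $z\mapsto e^{2\pi iz}$, to obtain $\tilde\gamma:\mathbb{H}\to\Omega$ equivariant under a monodromy element $T\in\Gamma$: $\tilde\gamma(z+1)=T\cdot\tilde\gamma(z)$. Since $\Gamma$ is arithmetic and torsion-free, Borel's monodromy theorem gives that $T$ is quasi-unipotent; passing to a finite cyclic cover of $\mathbb{D}^*$ (harmless for the extension property) I may assume $T$ is unipotent.

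Next I would invoke the theory of rational boundary components underlying the Baily--Borel construction: the unipotent element $T$ lies in the unipotent radical of a unique minimal rational parabolic $P\subset\mathrm{Aut}(\Omega)$, and this parabolic singles out a rational boundary component $F\subset\partial\Omega$ whose image in $X^*$ is the candidate limit point $[F]\in X^*\setminus X$. The main step is then to establish the pointwise limit $\tilde\gamma(z)\to F$ as $\mathrm{Im}\,z\to\infty$, uniformly in $\mathrm{Re}\,z$. For this, I would combine the Siegel-set decomposition near $F$ with the observation that the Poincar\'e length of the circle $\{|w|=r\}\subset\mathbb{D}^*$ tends to zero as $r\to 0$: by the distance-decreasing property this forces $\tilde\gamma$ restricted to a fundamental domain for $T$ in each horizontal strip to have diameter tending to zero in the Bergman metric, and together with the asymptotic behavior of the Bergman metric along the unipotent direction in the Siegel coordinates this pins the image down to a neighborhood of $F$.

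Once that uniform convergence is in hand, descending through $\pi$ and the quotient $\Omega\to X$ gives continuity of $\bar\gamma$ at $0$; holomorphicity then follows from Riemann's extension theorem applied in local analytic coordinates of $X^*$ at $[F]$, completing the big Picard extension and hence the hyperbolic embedding. The hard part of the argument is the uniform-in-$\mathrm{Re}\,z$ asymptotic convergence to $F$: it rests on a fine analysis of the Bergman metric on Siegel sets near a rational boundary component and on the reduction theory of arithmetic groups, and is the technical heart of Borel's original paper \cite{Borel-72} that one cannot really avoid.
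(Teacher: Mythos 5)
The paper does not prove Theorem~\ref{borel}: it is stated as background and cited to Borel's 1972 paper \cite{Borel-72}, so there is no in-text argument to compare against. Your outline (lifting $\gamma$ to $\mathbb{H}\to\Omega$, Borel's quasi-unipotence of monodromy, the rational parabolic and Siegel-set geometry near the rational boundary component, convergence of $\tilde\gamma$ forced by the collapse of the Poincar\'e length of circles in $\mathbb{D}^*$) is indeed a faithful sketch of the skeleton of Borel's original extension argument, and you are candid that the Siegel-set estimates are the genuine technical content one cannot shortcut.

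However, there is a real gap at the very first step, and it is structural rather than cosmetic. You reduce the assertion \enquote{$X$ is hyperbolically embedded in $X^*$} to the big Picard extension property for maps $\mathbb{D}^*\to X$, invoking a \enquote{Kiernan--Kobayashi characterization of hyperbolic embeddings.} What the K$^2$ theorem (the one the paper quotes from \cite[Theorem~(6.3.7)]{Kobayashi}) gives is the \emph{forward} implication: hyperbolic embedding of $X$ in $X^*$ implies that every holomorphic $\mathbb{D}^*\to X$ extends holomorphically to $\mathbb{D}\to X^*$. The converse is not a theorem as you state it. The standard criteria for hyperbolic embedding (Kiernan, Kobayashi, Joseph--Kwack) are either intrinsic distance estimates near pairs of boundary points, a uniform Landau--Schottky bound for $\mathrm{Hol}(\mathbb{D},X)$ with respect to a length function on $X^*$, or relative compactness of $\mathrm{Hol}(\mathbb{D},X)$ in an appropriate space of maps; and the extension-theoretic characterizations that do exist require extension of maps from $W\setminus A\to X$ for \emph{all} manifolds $W$ and divisors $A$, or at least from higher-dimensional punctured polydisks, not merely from $\mathbb{D}^*$. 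So after you finish proving big Picard for $\mathbb{D}^*$, you have not yet proved the theorem. To close the gap you would need either to run Borel's Siegel-set estimates so as to produce a length function on $X^*$ giving a uniform bound on $\mathrm{Hol}(\mathbb{D},X)$ (which is essentially what Borel's metric comparison near the cusps accomplishes), or to upgrade the $\mathbb{D}^*$ extension to the family/multi-variable extension that the Kiernan-type criterion actually requires. As written, the reduction is asserted but not available.
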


{For moduli spaces of canonically polarized varieties, the KSBA compactifications (see the survey \cite{Kollar-13}) are natural candidates for the hyperbolic embeddings and Borel's theorem leads us to pose:}
\begin{ques}
Let ${\bar f}:\,X \to Y$ be a KSBA stable family over a projective variety $Y$. Denote by $U\subset Y$ the open subset  over which $f$ is smooth. Is $U$ hyperbolically embedded in $Y$?  
\end{ques}

{Also inspired by this theorem, Javanpeykar and Kucharczyk in \cite{Java-Kuch} formulated the following:}
\begin{defi}
A finite type scheme $X$ over $\mathbb{C}$ is \emph{Borel hyperbolic} if, for every finite type reduced scheme $S$ over $\mathbb{C}$, any holomorphic map from $S$ to $X$ is algebraic.
\end{defi}

{It is easy to see that Borel hyperbolicity implies Brody hyperbolicity and that hyperbolic embeddability implies Borel hyperbolicity. As observed in \cite{Gri-King-73}, Borel hyperbolicity can be verified by restricting to the case $C$ is one dimensional. The following is a natural question in our context:\\[-4mm]
%
\begin{ques} Is every {Brody hyperbolic} moduli space of polarized manifolds Borel hyperbolic?
\end{ques}
}
{Very recently, Bakker, Brunebarbe and Tsimerman have obtained some sweeping partial results in this direction, see {\cite[Corollary~7.1]{BBT-18}}.} For a family with local Torelli injectivity, i.e. the period map is quasi-finite, the Borel hyperbolicity is a direct corollary of a conjecture of Griffiths. 
More precisely, let $B$ be {an algebraic variety} with a polarized variation of Hodge structures (PVHS) and $\Phi:\, B \to \Gamma \backslash D$ the induced period map. Here $D$ is the period domain (namely the classifying space of Hodge structures with fixed Hodge numbers) and $\Gamma$ is the monodromy group of the PVHS on $B$. In \cite{Gri-70}, Griffiths conjectured that the image $\Phi (B) \subset \Gamma \backslash D$ is quasi-projective. Note that the quotient space $\Gamma \backslash D$ is in general a highly transcendental object. {The paper \cite{BBT-18} confirms this conjecture assuming that $\Gamma$ is arithmetic as a corollary of its deep results on the o-minimal GAGA theorem.} 
We remark that this conjecture with arbitrary monodromy group $\Gamma$ was established {when} $\mathrm{dim}\,\Phi (B)=1$ (cf. \cite{Som-73}, \cite{CDK-95}) and $\mathrm{dim}\, B=2$ (cf. {\cite[Theorem~1.2.6]{GGRC-19}}). \\[-3mm]

Nevertheless, families of polarized varieties where the local Torelli injectivity fails abounds. 


\subsection{The big Picard theorem and Borel hyperbolicity}
Let ${\bar f}:\, X \to Y$ be an analytic family of projective manifolds over a projective base $Y$ with degeneration locus $S\subset Y$. {Hence ${\bar f}$ is a compactification of the smooth family $f:V\to U$ with $U = Y \setminus S$ and $V=\bar f^{-1}(U)$.} Griffiths introduced in \cite{Gri-1,Gri-2,Gri-3} the notion of polarized variation of Hodge structure on $U = Y \setminus S$. Schmid, Deligne and Cattani-Kaplan-Schmid  (\cite{Sch-73,Del-84,CKS-86}) have studied the asymptotic behavior of the Hodge structures and the Hodge metric near the degeneration locus. 
Their results are of fundamental importance in the study of the geometry of 
families. Kawamata and Viehweg's positivity theorems on the direct image  $f_* \,\omega^\nu_{X/Y} $ of powers of the relative dualizing sheaf are examples that play crucial roles in the investigation of the Iitaka conjecture. Another is Viehweg's work on constructing the moduli space of varieties with semi-ample dualizing sheaves.\\[-3mm]

As mentioned, the Torelli-type theorem fails in general for such a family.
As a substitute, Viehweg and the fourth named author constructed in \cite{VZ-1} a non-trivial comparison map between the usual Kodaira-Spencer map and the Kodaira-Spencer map on the Hodge bundles associated to some new family built from certain cyclic coverings of $X$. Consequently, using the semi-negativity of the kernels of the Kodaira-Spencer maps on the Hodge bundles (proven in \cite{Zuo-00}) and the positivity results on the direct image sheaves, the maximal non-zero iteration of Kodaira-Spencer map yields the ``\emph{bigness}" of the so-called \emph{Viehweg-Zuo subsheaves} in symmetric powers of $\Omega^1_Y(\log S)$.  
These subsheaves give rise analytically to negatively curved complex Finsler pseudometrics on $U=Y\setminus S$.\\[-3mm]

{In the paper \cite{Gri-King-73}, Griffiths and King studied the higher dimensional generalization of value distribution theory.  
{With it, they obtained a Nevanlinna-theoretic proof of Borel's theorem via negative curvature (cf.{\cite[Corollary~(9.22)]{Gri-King-73}}}). 
Our negatively curved Finsler pseudometrics 
led us naturally to this approach of Griffiths-King in generalizing Borel's theorem, which we manage to simplify and strenthen to yield the first proof of the key technical theorem of this paper below.} 
{\begin{thmx}[Criterion for the big Picard theorem]\label{thm:Big Picard}
	Let $X$ be a projective manifold, $\omega$ a K\"ahler metric on $X$  and $D$ a simple normal crossing divisor on $X$. Let $\gamma:{\mathbb D}^*\to X\setminus D$ be a  holomorphic map. 
Assume that there is a Finsler pseudometric $h$ on $T_X(-\log D)$ (in the sense of Definition \ref{def:Finsler}) such that $ |\gamma'(z)|_h^2\not\equiv 0$ 
and that the following inequality holds in the sense of currents
		\begin{align}\label{eq:condition}
		\hess \log |\gamma'(z)|_h^2\geq \gamma^*\omega.
		\end{align}
  Then $\gamma$ extends to a holomorphic map $\bar{\gamma}:{\mathbb D}\to X$.
\end{thmx}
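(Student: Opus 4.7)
The plan is to reduce the extension problem to a classical result on removable singularities for holomorphic maps of finite area into compact K\"ahler manifolds, using the curvature hypothesis to establish that $\gamma(\mathbb{D}^*)$ has finite $\omega$-area in a punctured neighborhood of $0$.

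Setting $u(z) = |\gamma'(z)|_h^2$, the hypothesis $\hess \log u \geq \gamma^*\omega \geq 0$ makes $\log u$ subharmonic on $\mathbb{D}^*$; together with the local integrability of $\log u$, this extends $\log u$ to a subharmonic function on the full disk $\mathbb{D}$. I would then integrate the current inequality over an annulus $\{r<|z|<R\}$ with $0 < r < R < 1$ and apply Stokes' formula. Writing $M(t) = \frac{1}{2\pi}\int_0^{2\pi}\log u(te^{i\theta})\,d\theta$ for the circular mean and $\nu(t) := dM/d\log t$, this produces
$$\int_{r<|z|<R} \gamma^*\omega \;\leq\; \int_{r<|z|<R} \hess \log u \;=\; \nu(R) - \nu(r).$$
The function $\nu(t)$ is non-decreasing in $t$ by convexity of $M$ as a function of $\log t$ (a reformulation of the subharmonicity of $\log u$), and the local integrability of $\log u$ ensures that the limit $\nu_0 := \lim_{r\to 0^+}\nu(r)$ is finite and non-negative (it coincides with the Lelong number of $\log u$ at $0$). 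Letting $r \to 0$ yields the bound
$$\int_{\mathbb{D}_R^*} \gamma^*\omega \;\leq\; \nu(R) - \nu_0 \;<\; \infty,$$
so $\gamma$ has finite $\omega$-area in a deleted neighborhood of $0$.

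With this finite-area property and $X$ projective (in particular compact K\"ahler), a classical extension theorem applies: the closure of the graph of $\gamma$ in $\mathbb{D}\times X$ is an analytic subset (Bishop's theorem on removable singularities for graphs of locally finite volume, or equivalently via Skoda--El Mir combined with Remmert--Stein). Since the source is one-dimensional, the indeterminacy locus of the resulting meromorphic extension has codimension $\geq 2$ and is therefore empty, yielding a holomorphic extension $\bar\gamma: \mathbb{D} \to X$ as required.

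The main obstacle will be the rigorous handling of the current equality from Stokes when $\log u$ is singular (at zeros of $\gamma'$ or as $z \to 0$). One expects to regularize by replacing $u$ with $u + \epsilon$ and then passing to the limit via standard convergence theorems for subharmonic functions; the local-integrability hypothesis is precisely what renders $\nu_0$ finite and hence the area bound non-vacuous. Note that $\bar\gamma(0)$ may well lie in $D$, which is consistent with---indeed permitted by---the theorem's conclusion that the extension lands in $X$ rather than in $X\setminus D$.
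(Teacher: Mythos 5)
Your approach is genuinely different from both of the paper's proofs: you try to deduce finite $\omega$-area near the puncture by a Green--Jensen computation from the subharmonicity of $\log u$ alone, and then conclude by Bishop's extension theorem. The concluding step is sound (finite area near $0$ is indeed equivalent to $T_{\gamma^*\omega}(r)=O(\log r)$, which is the criterion of Lemma~\ref{lem:criteria}), but the crucial intermediate claim --- that $\nu_0:=\lim_{r\to 0^+}\nu(r)$ is finite, equivalently that $\log u$ admits a subharmonic extension across $0$ --- is not established, and proving it is essentially the content of the theorem. The hypothesis that $\log|\gamma'|_h^2$ is locally integrable refers to $L^1_{\mathrm{loc}}(\mathbb{D}^*)$, a mild condition (automatic for a subharmonic function $\not\equiv-\infty$) whose only role is to make $\hess\log|\gamma'|_h^2$ a well-defined current on $\mathbb{D}^*$; it says nothing about behaviour across $z=0$. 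Even if one grants integrability across the origin, the conclusion still fails: $\phi(z)=1/|z|$ is subharmonic on $\mathbb{D}^*$, integrable near $0$, yet has $\nu(t)=-1/t\to-\infty$ and admits no subharmonic extension through $0$. Invoking a ``Lelong number of $\log u$ at $0$'' presupposes precisely the subharmonic extension you have not proven, so the argument as written is circular at this step.

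The paper supplies the missing quantitative control in two distinct ways. In Section~\ref{sect_BP}, the Finsler pseudometric $h$ is replaced by the boundary modification $h_\alpha=h\cdot l_D^{-\alpha}$ of Proposition~\ref{abstract-curv-bound}, which has holomorphic sectional curvature bounded above by a negative constant; this stronger, Ahlfors--Schwarz-type inequality makes a self-improving estimate on the characteristic function possible via concavity of $\log$, the Green--Jensen formula, and two applications of the calculus lemma (Lemma~\ref{lem:Borel}), yielding $T_{\omega_\gamma}(r)=O(\log r)$ and hence $T_{\gamma^*\omega_{FS}}(r)=O(\log r)$. In Section~\ref{sect_BP2}, inequality \eqref{eq:condition} is instead fed through the explicit local basis of $T_X(-\log D)$ into Noguchi's lemma on logarithmic derivatives for the punctured disk (Lemma~\ref{lem:Noguchi}), which carries the needed asymptotic estimate on its own. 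Either route provides the bound on $\nu(r)$ (equivalently, on the area) that a bare Green--Jensen identity cannot, and it is precisely this ingredient your proposal lacks.
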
} 
We give a second perhaps more modern proof of this theorem in section 4, see also the remark just before that section. {We mention that this criterion 
is also used by the first named author in \cite{Den20}  to prove the big Picard theorem for varieties having a quasi-finite period map. {In addition, although the Nevanlinna theoretic tools involved are quite standard, our approaches in the two proofs of Theorem~\ref{thm:Big Picard} are not and have further implications beyond the criterion proper.} In particular, our proof can be modified to yield  Corollary~\ref{cor:extension} below, omitted for simplicity as it follows from a deep theorem of Siu (the Borel hyperbolicity part there being a simple corollary of Theorem~\ref{big-picard new}).\\[3mm] 
\indent
As indicated above, the construction of \cite{VZ-1} generalizes  from the case of canonically polarized manifolds to the  case of manifolds with semiample canonical bundle. This includes recent key observations from the paper of Popa-Taji-Wu \cite{PTW-18} and from the first named author in \cite{Deng-18b} and is worked out in section 2, where we construct the required metric $h$ over a desingularization of the base space of the family, i.e., one  that  satisfies (\ref{eq:condition}) of Theorem~\ref{thm:Big Picard}.  
This yields our main theorem.}\\[-4mm]
{\begin{thmx}[Big Picard theorem]\label{big-picard new}
		Let $(f:\,V \to U, \mathcal{L})\in \mathcal{M}_h(U)$ be an algebraic family of polarized projective manifolds of Hilbert polynomial  $h$ and semi-ample canonical bundle. Suppose that the moduli map $U\to M_h$ from $U$ to the coarse moduli space $M_h$ is quasi-finite. Given  a completion $\bar U$ of $U$,  any holomorphic map $\gamma:\, {\mathbb D}^* \to U$  extends to a holomorphic map $\bar \gamma:\, {\mathbb D} \to \bar U$. {In particular,  Any holomorphic map from an algebraic curve $T$ to $U$ is necessarily algebraic.}
\end{thmx}}
\vspace{-1mm}
%
\begin{corx}\label{cor:extension}
	Let $(f:\,V \to U, \mathcal{L})\in \mathcal{M}_h(U)$ be as given in Theorem \ref{big-picard new}.  Let $Y$ be a projective compactification of $U$. Then  any  holomorphic map $\gamma: \mathbb{D}^p\times (\mathbb{D}^*)^q\to U$ extends to a meromorphic map $\overline{\gamma}:\mathbb{D}^{p+q}\dashrightarrow Y$.  {In particular, $U$ is Borel hyperbolic: Any holomorphic map from an algebraic variety $T$ to $U$ is necessarily algebraic;} I.e., over an algebraic variety, any such holomorphic family of polarized projective manifolds is actually algebraic.
\end{corx} 
\vspace{-1mm}
{
\begin{proof}[Proof of part 1 of \em{Corollary~\ref{cor:extension}}] By \cite[Theorem 1]{Siu75}, any meromorphic map to a compact K\"ahler manifold extends across  a subvariety of   codimension 2. As $U$ embeds in ${\mathbb CP}^N,\exists\, N$, it suffices to prove the extension property for a holomorphic map of the form $\gamma: \mathbb{D}^{r}\times \mathbb{D}^*\to U$.  Now, the key result \cite[p.442,  ($\ast$)]{Siu75} of Siu states that $\gamma$ extends to a meromorphic map  $\overline{\gamma}:\mathbb{D}^{r+1}\dashrightarrow Y$ if for all $z$ in a subset of $\mathbb{D}^r$ of nonzero Lebesque measure, the holomorphic map  $\gamma|_{\{z\}\times \mathbb{D}^*}:\{z\}\times \mathbb{D}^*\to U$ can be extended to a holomorphic map from $\{z\}\times \mathbb{D}$ to $Y$. But this latter follows from Theorem~\ref{big-picard new}.
\end{proof}
\noindent
Note that, by  the first part of Theorem~\ref{big-picard new}, respectively that of Corollary~\ref{cor:extension}, the holomorphic map  $\gamma:T\to U$ extends to a meromorphic map between any of their projective compactifications, which is thus a rational map by Chow's theorem. The last parts of  Theorem~\ref{big-picard new} and Corollary~\ref{cor:extension} follow. \\[2mm]
	In fact Theorem~\ref{big-picard new} can be modified to a Lang conjecture type statement: for a family of polarized manifolds $f:\, V \to U$ with maximal variation in moduli, i.e. with generically finite moduli map, there is a proper subvariety of the base $U$ so that any punctured disk whose image is not contained in this proper subvariety satisfies the big Picard theorem. See Remark~\ref{relaxation} for details.}\\[0mm]
%
%

  \subsection{Algebraic hyperbolicity} \emph{Algebraic hyperbolicity} for a compact complex manifold $X$ was  introduced by Demailly in \cite[Definition 2.2]{Dem97}. {It is shown} in \cite[Theorem 2.1]{Dem97} that $X$ is algebraic hyperbolic if it is  Kobayashi hyperbolic.  The notion of algebraic hyperbolicity was generalized to the case of {smooth} log-pairs $(X,D)$ by Chen \cite{Che04}. {It naturally generalizes further to the case of arbitrary  singular pairs of (reduced) projective varieties:}
	\begin{defi}[Algebraic hyperbolicity]\label{def:DC}
		Let $X$ be a projective {variety} and $\Delta$ an algebraic subset.  For a reduced irreducible curve $C\subset X$ with $C\not\subset \Delta$ and  $\nu:\tilde{C}\to C$
		its normalization,  let $i_X(C,\Delta)$ be the number of 
		points in $\nu^{-1}(\Delta)$.  The pair $(X,\Delta)$ is \emph{algebraically hyperbolic}  if there is a K\"ahler metric $\omega$ on $X$ such that, for all curves $C\subset X$ as above,
\begin{align}\label{ineq}
		2g(\tilde{C})-2+i(C,\Delta)\geq {\rm deg}_{\omega}C:=\int_C\omega.
\end{align}	
	\end{defi} 
\vspace{1mm}
{It is easily seen, just as the fore-mentioned observation} by Demailly in the case $\Delta=\emptyset$, that if $X\setminus \Delta$ is hyperbolically embedded into $X$,
the pair $(X,\Delta)$ is  algebraically hyperbolic (c.f. \cite{PR07}). \\[-3mm]	
	
	Note that   $2g(\tilde{C})-2+i(C,\Delta)$    depends only on the complement $X\setminus\Delta$.
	Hence the above notion of hyperbolicity also makes sense for quasi-projective varieties: we say that a quasi-projective variety $U$ is algebraically hyperbolic if it is birational to $Y\setminus D$ for a smooth log pair $(Y,D)$   
{with a K\"ahler metric $\omega$ on $Y$ satisfying the inequality \ref{ineq} for all curves $C\subset Y$ such that $C\setminus D$ is finite over $U$.} \\[-3mm]

The last main theorem in this paper is the algebraic hyperbolicity of the moduli spaces considered.
	
	\begin{thmx}[Algebraic hyperbolicity]\label{thm:algebraic hyperbolicity}
		Let $(f:\,V \to U, \mathcal{L})\in \mathcal{M}_h(U)$ be a  polarized family as that given in Theorem \ref{big-picard new}. Then the base $U$  is  algebraically hyperbolic.\\[-3mm]
\end{thmx}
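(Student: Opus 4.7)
I would follow a standard Ahlfors--Schwarz/Chern class argument with the negatively curved Finsler pseudometric that underlies Theorem~\ref{big-picard new} as input. First, fix a log-smooth compactification $(Y,D)$ of $U$ for which the Viehweg--Zuo construction (in the semiample extension recalled in the earlier sections of this paper) produces a Finsler pseudometric $F$ on $T_Y(-\log D)$ satisfying the curvature inequality~\eqref{eq:condition} along every non-degenerate holomorphic map into $U$. The quasi-finiteness of the moduli map $U\to M_h$ combined with the refinement of \cite{VZ-1} in \cite{Deng-18b} ensures that $F$ is pointwise positive definite on $T_U$.

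Next, let $C\subset Y$ be an irreducible reduced curve with $C\not\subset D$ such that $C\setminus D$ is finite over $U$, and let $\nu:\tilde{C}\to C$ be the normalization with $B:=\nu^{-1}(D)$ of cardinality $i(C,D)$. The differential $d\nu$ gives a morphism $T_{\tilde{C}}(-\log B)\to \nu^{*}T_Y(-\log D)$; pulling back $F$ via $d\nu$ yields a singular Hermitian (pseudo-)metric $\widetilde{F}$ on the line bundle $T_{\tilde{C}}(-\log B)$, equivalently a singular Hermitian metric $\widetilde{F}^{-1}$ on $K_{\tilde{C}}(B)$. The pointwise positivity of $F$ on $T_U$ together with the non-constancy of $\nu|_{\tilde{C}^{\circ}}$, where $\tilde{C}^{\circ}:=\tilde{C}\setminus B$, makes $\log\|d\nu\|^{2}_{F}$ locally integrable and plurisubharmonic with at worst analytic singularities. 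Applied to $\gamma=\nu$, the curvature inequality~\eqref{eq:condition} reads, as currents on $\tilde{C}$,
\[
\hess\,\log\|d\nu\|^{2}_{F} \;\geq\; \nu^{*}\omega,
\]
and the left-hand side is, up to a positive numerical constant, the curvature current of $\widetilde{F}^{-1}$. Integrating over the compact Riemann surface $\tilde{C}$, the left-hand side gives (up to that same constant) $\deg K_{\tilde{C}}(B) = 2g(\tilde{C})-2+i(C,D)$, while the right gives $\deg_{\omega}C$. Absorbing the constant into the rescaling of $\omega$ (permitted in Definition~\ref{def:DC}) yields~\eqref{ineq}.

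\textbf{Main obstacle.} The essential hyperbolicity input has already been extracted for Theorem~\ref{big-picard new}, so the remaining work is primarily careful bookkeeping. The delicate points are (i) pointwise non-degeneracy of the Viehweg--Zuo Finsler pseudometric on $T_U$, ensuring that $\widetilde{F}$ is non-trivial for \emph{every} admissible curve $C$ and not merely generically, and (ii) rigorous justification of the Chern--Weil integration for the singular Hermitian metric $\widetilde{F}^{-1}$. The first is handled by the refinement already recalled earlier. For the second, one needs the local weights of $\widetilde{F}$ to be quasi-plurisubharmonic and $L^{1}_{\mathrm{loc}}$; both follow from the analytic nature of the Viehweg--Zuo pseudometric, and the total-mass conclusion is then a standard consequence of Demailly's theory of singular Hermitian metrics on compact Riemann surfaces.
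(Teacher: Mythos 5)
Your proposal has a genuine gap. The crucial unjustified claim is that the Viehweg--Zuo Finsler pseudometric $F$ is \emph{pointwise} positive definite on all of $T_U$. What the paper actually has at its disposal (Theorem~\ref{deng}) is only that $\tau^1$ is a vector bundle injection on a dense Zariski open subset $U^\circ\subset U$; the assertion that the iterated Kodaira--Spencer map is injective at \emph{every} point of $U\setminus(S+T)$ is stated as Conjecture~\ref{pt-inj} and remains open in general (verified only in Kodaira dimension one). Thus for a curve $C$ with $C\setminus D$ finite over $U$ whose image happens to lie entirely in the bad locus $U\setminus U^\circ$, your pullback $\widetilde{F}=\nu^{*}F$ could vanish identically, $\log\|d\nu\|^{2}_{F}$ would fail to be locally integrable, and the Chern--Weil integration becomes vacuous. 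Quasi-finiteness of the moduli map and the refinement in \cite{Deng-18b} do not rescue this: the latter produces Kobayashi hyperbolicity, not a globally non-degenerate negatively curved Finsler metric.

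The paper circumvents this precisely by \emph{not} working with a single Finsler pseudometric. Instead it builds a finite tower of log pairs $\{(X_j,D_j)\}_{j=0,\dots,N}$ by iterating: desingularize each irreducible component $Z_i$ of $X_0\setminus U_0^\circ$ not contained in $D_0$, observe that the restricted family over the resulting $U_i$ is still of maximal variation (again by quasi-finiteness of the original moduli map), run the Viehweg--Zuo construction there via Theorems~\ref{deng} and \ref{curv-ineq-for-criteria} to get new Finsler pseudometrics $h_{i1},\dots,h_{in}$ and K\"ahler forms $\omega_{i1},\dots,\omega_{in}$, and repeat on the new bad loci; dimensions strictly drop so the process terminates. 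For a given curve $C$, one then selects the stratum $(X_i,D_i)$ on which the strict transform of $C$ meets $U_i^\circ$, applies the corresponding curvature inequality \eqref{eq:uniform} to the normalization $\nu_i:\tilde C_i\to C_i$, and integrates to get $2g(\tilde C_i)-2+i(C,D)\geq \int_{\tilde C_i}\nu_i^{*}\omega_{ij}$. Finally, finiteness of the tower allows a uniform $\epsilon>0$ with $\omega_{ij}\geq \epsilon\,\mu_i^{*}\omega_Y$ for all $i,j$, which is what produces the single K\"ahler metric required by Definition~\ref{def:DC}. This stratification step is the substance of the proof and is missing from your argument; once you see its necessity, the integration step you sketch (which is essentially correct on each stratum) completes the picture.
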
 

\subsection{Outline}{
In Section~\ref{sec-VZ}, {we explain in some detail} the construction of Viehweg and the fourth named author in \cite{VZ-2,VZ-1} {with some necessary additions} in our more general setting. 
In sections~\ref{sect_BP} and \ref{sect_BP2}, we give two distinct Nevanlinna theoretic proofs of Theorems \ref{thm:Big Picard} and \ref{big-picard new}, the first being self-contained and  based on  the method of metric and curvature while the second is based on the  lemma on logarithmic derivatives. 
Section~\ref{alg_hyp} proves our theorem on algebraic hyperbolicity.} \\[-4mm]

\subsection{Notation} In general we follow the notations of \cite{VZ-2,VZ-1} on the construction of Higgs bundles. 
We write $u \gtrsim v$ if there exists a constant $c >0$ such that $u(s) \geq c \cdot v(s)$ for all $s \in S$.

\subsection{Acknowledgements}
 {The last three authors would like to thank Phillip Griffiths for his  interest and feedbacks, 
Ariyan Javanpeykar for his motivating original discussions,
Songyan Xie for helpful discussions, the Academy of Mathematics and Systems Science, the School of Mathematical Sciences at the East China Normal University, CIRGET and its members at the Universit\'{e} du Qu\'{e}bec \`{a} Montr\'{e}al for their hospitality.} The first named author would like to thank Professors Jean-Pierre Demailly, Min Ru, Nessim Sibony and Emmanuel Ullmo for their  discussions, encouragements,  and supports and the wonderful provisions of l'Institut des Hautes \'Etudes Scientifiques. 
\\[-3mm]

This article is the converged and expanded version of the two preprints \cite{LSZ19,Den19b} on the arXiv and hence takes precedence over them. 

\section{Recollections on the Viehweg-Zuo construction}\label{sec-VZ}
In \cite{VZ-1,VZ-2}, Viehweg and the fourth named author constructed two graded logarithmic Higgs bundles for any smooth family of projective manifolds $f_0:\, V \to U$ {in order to prove the hyperbolicity of the base $U$. We recall the construction with some simplifications and extensions.} 
\\[-3mm]

{We first note that $f_0$ is of maximal variation if $\mathrm{Var}(f_0) = \mathrm{dim}\, Y$, cf. {\cite[Section~1]{Kawa-85},} {where $\mathrm{Var}(f_0)$ is the dimension of the image of the moduli map. Therefore,}
if the base $U$ of the original family is not smooth, which is the case considered in Section~1 in general, we can always pull back the family by a desingularization of the base to one of smooth base with maximal variation. Since all the arguments from Viehweg-Zuo allow for such birational modifications, we assume from now on unless stated otherwise that the base $U$ of our family is non-singular, relaxing the quasi-finite hypothesis to that of maximal variation. We also assume, replacing $U$ when necessary by the Zariski closure of the holomorphic curve $C$, that the holomorphic curve $C$ under consideration is Zariski dense in $U$. Our proof of Theorem~\ref{big-picard new} then proceeds without loss of generality (see Remark~\ref{rem:proof}).} 

\subsection{Cyclic covering and the comparison map}\label{comparison1}
Let $f:V \to U$ be a smooth algebraic family of polarized projective manifolds with semi-ample canonical bundles and with $U$ nonsingular. 
Then there is a partial good compactification ${\bar f}: X \to Y$ of the this family, meaning {by definition} that: 
\begin{itemize}
\item[1)] $X$ and $Y$ are quasi-projective manifolds, and $U \subset Y$. 
\item[2)] $S:=Y \setminus U$ and $\Delta:={({\bar f}^*S)_{red}}$ are normal crossing divisors.
\item[3)] ${\bar f}$ is a log smooth projective morphism between the log pairs $(X,\Delta)$ and $(Y,S)$, and ${\bar f}^{-1}(U) \to U$ coincides with the original family $V \to U$.
\item[4)] $Y$ has a non-singular projective compactification $\bar{Y}$ such that $\bar{Y} \setminus U$ is a normal crossing divisor and $\mathrm{codim}(\bar{Y} \setminus Y) \geq 2$.
\end{itemize}
{We fix such a compactification and adopt the notations as given in this definition.} \\[-3mm]

The strategy of Viehweg and Zuo \cite{VZ-1,VZ-2} is to exploit the Kawamata-Viehweg positivity result of the direct image sheaves, which they used to first construct cyclic coverings over the total space $X$ that are natural with respect to the family. {This construction is accomplished through:}

\begin{thm}[{\cite[Corollary~4.3]{VZ-1}} or {\cite[Proposition~3.9]{VZ-2}}]
Denote by $\mathcal{L}:= \Omega^n_{X/Y}(\log \Delta)$ the sheaf of top relative log differential forms $(n$ is the relative dimension$\,)$. Suppose that $f={\bar f}|_U$ is of maximal variation. Then there exists an ample line bundle $A$ on $\bar{Y}$ and an integer $\nu \gg 1$ such that $\mathcal{L}^{\nu} \otimes f^*A^{-\nu}$ is globally generated over $V_0:=f^{-1}(U_0)$, where $U_0$ is some open dense subset of $U$. 
\end{thm}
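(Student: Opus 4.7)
The strategy is to deduce the statement from Viehweg's positivity theorems for the direct image sheaf $f_*\mathcal{L}^{\nu}$: first establish weak positivity, then upgrade to bigness using the maximal variation hypothesis, and finally transfer the positivity from $\bar{Y}$ back up to $X$ via adjunction.

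First I would invoke Viehweg's weak positivity theorem: for $\nu$ sufficiently divisible, $f_*\mathcal{L}^{\nu} = f_*(\Omega^n_{X/Y}(\log \Delta))^{\otimes \nu}$ is weakly positive on $Y$ and extends as a weakly positive torsion-free sheaf to $\bar{Y}$ (using the codimension-$2$ condition in property (4) and reflexive hull). This rests on Viehweg's iterated cyclic covering construction, combined with Fujita--Kawamata-type semipositivity for the Hodge bundles associated to those covers. Next, to exploit $\mathrm{Var}(f) = \dim Y$, I would appeal to Viehweg's bigness theorem (following Koll\'ar's strategy): for some $\nu$ sufficiently divisible and some $s\geq 1$, there is an ample line bundle $A'$ on $\bar{Y}$ such that $S^{s}(f_*\mathcal{L}^{\nu})\otimes (A')^{-1}$ is generically globally generated on $\bar{Y}$. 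This is the heart of the argument: maximal variation of the family forces bigness of the determinant of the Hodge bundle, and hence of $f_*\mathcal{L}^{\nu}$ itself.

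Then I would transfer the positivity back to $X$. Composing the multiplication map $S^{s}(f_*\mathcal{L}^{\nu})\to f_*\mathcal{L}^{\nu s}$ (an isomorphism over the smooth locus of $f$ inside $U$) with the adjunction $f^*f_*\mathcal{L}^{\nu s}\to \mathcal{L}^{\nu s}$ yields global sections of $\mathcal{L}^{\nu s}\otimes f^*(A')^{-1}$ that globally generate this sheaf over $V_0 := f^{-1}(U_0)$ for some open dense $U_0\subset Y$. To match the precise form $\mathcal{L}^{\nu}\otimes f^*A^{-\nu}$ stated in the theorem, I would take a large tensor power and select an ample line bundle $A$ on $\bar{Y}$ so that $(A')^{k}\otimes A^{-\nu s k}$ is effective for suitable integers; multiplying by its canonical section preserves global generation on a possibly smaller but still dense $V_0$.

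The main obstacle is the upgrade from weak positivity to bigness under the maximal variation hypothesis. This is the substantive Hodge-theoretic content, relying on Viehweg's delicate iteration of cyclic covers, Esnault--Viehweg vanishing, and the translation of variation of the family into positivity of the direct image. It is precisely here that the failure of Torelli injectivity for a general such family is circumvented. The transfer from $\bar{Y}$ to $X$ and the final rearrangement of exponents are comparatively routine once bigness is secured.
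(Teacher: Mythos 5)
The paper does not give its own proof of this statement; it recalls the theorem from \cite[Corollary~4.3]{VZ-1} and \cite[Proposition~3.9]{VZ-2} and only adds a remark flagging that Viehweg--Zuo obtain it via the \emph{self-fiber-product trick}, at the cost of replacing $X$ by an $r$-fold fiber product $X\times_Y\cdots\times_Y X$ (hence increasing the fiber dimension). With that caveat, your three-step outline (weak positivity $\Rightarrow$ bigness under maximal variation $\Rightarrow$ transfer to $X$) does capture the skeleton of the Viehweg--Zuo argument, so the proposal is in the right spirit. Two points should be corrected, however.

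First, the claim that the multiplication map $S^s(f_*\mathcal{L}^\nu)\to f_*\mathcal{L}^{\nu s}$ is an isomorphism over the smooth locus is false. On a fiber $X_y$, the map $S^sH^0(X_y,\mathcal{L}^\nu)\to H^0(X_y,\mathcal{L}^{\nu s})$ is generally far from injective and is surjective only for $\nu$ sufficiently large and divisible, using that $\omega_{X_y}$ is semi-ample; you would need to record this as the reason the map has the right image. In fact the detour through $f_*\mathcal{L}^{\nu s}$ is unnecessary: once $S^s(f_*\mathcal{L}^\nu)\otimes (A')^{-1}$ is generically globally generated, pull the whole symmetric power back by $f$ and compose with the surjection $S^s\bigl(f^*f_*\mathcal{L}^\nu\bigr)\to S^s(\mathcal{L}^\nu)=\mathcal{L}^{\nu s}$ induced by the relative evaluation map $f^*f_*\mathcal{L}^\nu\twoheadrightarrow\mathcal{L}^\nu$ over $V_0$ (this surjectivity is exactly fiberwise global generation of $\mathcal{L}^\nu$, valid for $\nu$ large divisible by semi-ampleness). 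This is cleaner and avoids the incorrect isomorphism claim.

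Second, you treat the ``upgrade to bigness'' as a citable black box, but this is precisely where the self-fiber-product trick enters (and it is the point the paper's remark emphasizes): Viehweg's positivity/bigness theorems naturally produce statements about tensor powers $\bigotimes^r f_*\mathcal{L}^\nu$, which are then identified with direct images for the fiber-product family $f^{(r)}:X^{(r)}\to Y$, rather than directly about $\hat{S}^s(f_*\mathcal{L}^\nu)$. The theorem, as actually used downstream in \cite{VZ-1,VZ-2} and in this paper, is therefore applied to the new family $X^{(r)}\to Y$ rather than the original one; the remark explains why this replacement is harmless for the hyperbolicity of the base $U$. If you want to land the statement on the original $X$, you need to say explicitly either that you are invoking a symmetric-power version of the bigness theorem (and give a reference), or that you are working with the fiber-product family as Viehweg--Zuo do. As written, your proposal silently elides this point, which is arguably the only non-routine aspect of the theorem that the paper chose to highlight.
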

It follows that the invertible sheaf $\mathcal{L}^{\nu} \otimes f^*A^{-\nu}$ has plenty of nontrivial sections {for $\nu$ large}. A cyclic covering of $X$ is thus obtained by taking the $\nu$-th roots of such a section $s$. We choose $Z$ to be a desingularization of this cyclic covering and denote the induced morphisms by $\psi:\, Z \to X$ and $g:\, Z \to Y$. {The  new family $g$ has in general a larger discriminant locus than $S$ given by $S \cup T$ where $T$ is the discriminant of  the zero divisor $H=(s)$ over $Y$. Thus} the restriction of $g$ over $Y \setminus (S \cup T)$ is smooth, which we denote by $g_0:\, Z_0 \to U_0$.

\subsubsection{The Higgs bundle coming from {the} variation of Hodge structures}
Consider the VHS on $U_0$ induced by the local system ${ \mathcal{V}_0=}R^n{g_0}_*\mathbb{C}_{Z_0}$. By blowing up the closure $\bar{S}+\bar{T}$ of $S+T$ in $\bar{Y}$ if necessary and replacing $\bar{S}+\bar{T}$ by its preimage, we assume that $\bar{S}+\bar{T}$ is a simple normal crossing divisor.  Deligne's quasi-canonical extension {(cf. \cite[\S4]{Sch-73})} then applies and we get a locally free sheaf $\mathcal{V}$ on $\bar{Y}$ {extending $ \mathcal{V}_0$} with the Gauss-Manin connection:
\[
\nabla:\, \mathcal{V} \to \mathcal{V} \otimes \Omega^1_{\bar{Y}}(\log(\bar{S}+\bar{T})).
\]
By the nilpotent orbit theorem (in \cite{Sch-73} and \cite{CKS-86}), the Hodge filtration {$\{ \mathcal{F}_0^p\}$ of $\mathcal{V}_0$ extends as a filtration by subbundles $\{ \mathcal{F}^p\}$ of $\mathcal{V}$} so that the associated Hodge bundle 
$
E := \mathrm{Gr}_{\mathcal{F}^{\bullet}}\mathcal{V}
$ 
is locally free on $\bar{Y}$. The induced Higgs map
\[
\theta:= \mathrm{Gr}_{\mathcal{F}^{\bullet}}\nabla: \, E \to E \otimes \Omega^1_{\bar{Y}}(\log(\bar{S}+\bar{T}))
\]
has logarithmic poles along $\bar{S}+\bar{T}$. One can write the Hodge bundle {summands}  as higher direct image sheaves of log forms if the divisor $\bar{S}+\bar{T}$ is smooth (cf. \cite{Zucker}). 
More precisely, we have
\[
E^{p,q}|_{Y_0} \cong R^qg_* \Omega^p_{Z/Y}(\log \Pi)|_{Y_0} 
\]
for $Y_0:= Y \setminus \mathrm{Sing}(\bar{S}+\bar{T})$ and($q:=n-p$), where $\Pi:=g^{-1}(S \cup T)$ ($\Pi$ is assumed to be normal crossing after birational modification of $Z$). {Clearly,} $\mathrm{codim}(\bar{Y} \setminus Y_0) \geq 2$.
\begin{rmk}
In the construction above we have already changed the birational model of $U$ since we have to blow up $T$ inside $U$. As mentioned,  this is allowed in our application.  
\end{rmk}

\subsubsection{The Higgs bundle coming from deformation theory}
The Hodge bundle $(E,\theta)$ {so constructed above has in general extra, artificially introduced logarithmic poles along $T\cap U$.} To study the hyperbolicity of the original base space $U$, we shall construct a Higgs bundle directly from the original family of maximal variation, whose Higgs map has logarithmic poles only along the boundary $S$. {Recall that we have assumed that $U$ is smooth.}\\[2mm]
As in \cite{VZ-1} and \cite{VZ-2}, we shall use the tautological short exact sequences
\begin{equation}\label{tautol-seq}
0 \to f^*\Omega^1_Y(\log S) \otimes \Omega^{p-1}_{X/Y}(\log \Delta) \to \mathfrak{gr}(\Omega^p_X(\log \Delta)) \to \Omega^p_{X/Y}(\log \Delta) \to 0
\end{equation}
where
\[
\mathfrak{gr}(\Omega^p_X(\log \Delta)) := \Omega^p_X(\log \Delta) / f^*\Omega^2_Y(\log S) \otimes \Omega^{p-2}_{X/Y}(\log \Delta).
\]
Note that the short exact sequence can be established only when $f:\,(X,\Delta) \to (Y,S)$ is log smooth. Denote by $\mathcal{L}=\Omega^n_{X/Y}(\log \Delta)$ as before. We define {on $Y$ the reflexive Higgs sheaf}
\[
F^{p,q}_0:= R^qf_*(\Omega^p_{X/Y}(\log \Delta) \otimes \mathcal{L}^{-1})/\mathrm{torsion}
\]
together with the edge morphisms
\[
\tau^{p,q}_0:\, F^{p,q}_0 \to F^{p-1,q+1}_0 \otimes \Omega^1_Y(\log S)
\]
induced by the exact sequence (\ref{tautol-seq}) tensored with $\mathcal{L}^{-1}$.  
\begin{rmk}
It is easy to see that $\tau^{n,0}_0|_U$ is nothing but the Kodaira-Spencer map of the family. So the Higgs maps $\tau^{p,q}_0$ can be regarded as the \emph{generalized Kodaira-Spencer maps}.  
\end{rmk}
We denote by $F^{p,q}$ the reflexive hull of $F^{p,q}_0$ on $\bar{Y}$. The Higgs maps $\tau^{p,q}_0$ extends automatically since $\mathrm{codim}(\bar{Y}\setminus Y) \geq 2$. So we get the {extended reflexive Higgs sheaf $(F,\tau)$,} defined on $\bar{Y}$.

\subsubsection{The comparison maps}
In \cite{VZ-1,VZ-2} Viehweg and Zuo constructed the following comparison maps $\rho^{p,q}$, which connect $(F,\tau)$ and $(E,\theta)$.
\begin{lem}
Using the same notations {as} introduced above, let
\[
\iota :\, \Omega^1_{\bar{Y}}(\log \bar{S}) \to \Omega^1_{\bar{Y}}(\log (\bar{S} + \bar{T}))
\]
be the natural inclusion. Then there exists morphisms $\rho^{p,q}:\, F^{p,q} \to A^{-1} \otimes E^{p,q}$ such that the following diagram commutes.
\begin{equation}
  \label{eq:3}
\xymatrixcolsep{5pc}\xymatrix{
F^{p,q} \ar[r]^-{\tau^{p,q}} \ar[d]^{\rho^{p,q}} & F^{p-1,q+1} \otimes \Omega^1_{\bar{Y}}(\log \bar{S}) \ar[d]^{\rho^{p-1,q+1} \otimes \iota} \\
A^{-1} \otimes E^{p,q} \ar[r]^-{\mathrm{id} \otimes \theta^{p,q}} &  A^{-1} \otimes E^{p-1,q+1} \otimes \Omega^1_{\bar{Y}}(\log (\bar{S} + \bar{T}))
}  
\end{equation}  
\end{lem}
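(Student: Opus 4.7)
The plan is to construct $\rho^{p,q}$ via the cyclic cover $\psi\colon Z\to X$ and then deduce commutativity of the diagram \eqref{eq:3} from the functoriality of boundary morphisms in the tautological exact sequence.

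For the construction, recall that $\psi$ is the desingularised $\nu$-th root cover of a section $s\in H^0(X,\mathcal{L}^{\nu}\otimes f^*A^{-\nu})$. Under the cyclic Galois action, the sheaf $\psi_*\mathcal{O}_Z$ splits (away from ramification) as $\bigoplus_{i=0}^{\nu-1}\mathcal{L}^{-i}\otimes f^*A^{i}$, and the same decomposition applied to log relative differentials yields a natural inclusion of the $i=1$ eigensheaf
\[
\Omega^p_{X/Y}(\log\Delta)\otimes \mathcal{L}^{-1}\otimes f^*A \hookrightarrow \psi_*\Omega^p_{Z/Y}(\log\Pi).
\]
Twisting by $f^*A^{-1}$, taking $R^qf_*$, and using $R^{\bullet}f_*\circ\psi_*=R^{\bullet}g_*$, one obtains a morphism $F^{p,q}_0\to A^{-1}\otimes R^qg_*\Omega^p_{Z/Y}(\log\Pi)$. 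Over $Y_0=Y\setminus\mathrm{Sing}(\bar S+\bar T)$ the target is identified with $A^{-1}\otimes E^{p,q}|_{Y_0}$ via the Zucker-type description of the Hodge graded piece recalled in the text. Since $F^{p,q}$ was defined as the reflexive hull on $\bar Y$ and $\mathrm{codim}(\bar Y\setminus Y_0)\ge 2$, this extends uniquely to the desired $\rho^{p,q}\colon F^{p,q}\to A^{-1}\otimes E^{p,q}$.

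To verify commutativity of \eqref{eq:3}, observe that $\tau^{p,q}$ is the edge operator of $R^{\bullet}f_*$ applied to \eqref{tautol-seq} tensored with $\mathcal{L}^{-1}$, while $\theta^{p,q}$ is the analogous edge operator of $R^{\bullet}g_*$ applied to the corresponding tautological sequence on $Z$ with $\log\Pi$ in place of $\log\Delta$ and $\log(\bar S+\bar T)$ in place of $\log\bar S$. The pullback by $\psi$, coupled with the canonical $\nu$-th root section on $Z$, induces a morphism between these two short exact sequences whose base-direction factor is precisely the inclusion $\iota\colon\Omega^1_{\bar Y}(\log\bar S)\hookrightarrow\Omega^1_{\bar Y}(\log(\bar S+\bar T))$, accounting for the new poles along $\bar T$ introduced by the cover. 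Functoriality of boundary maps in the associated long exact sequences then delivers \eqref{eq:3} at once.

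The main obstacle is the sheaf-level bookkeeping across the ramification locus of $\psi$: one has to check that the eigenspace inclusion above lands in $\Omega^p_{X/Y}(\log\Delta)\otimes\mathcal{L}^{-1}\otimes f^*A$ rather than in a version with extra poles along the zero divisor $H=(s)$, which forces a careful choice of desingularisation of $Z$ and of the divisor $\Pi$. Granted this, the morphism of tautological sequences and hence the commutativity are essentially formal. This is the heart of the construction in \cite{VZ-1,VZ-2}, and the proof amounts to recalling and assembling these ingredients.
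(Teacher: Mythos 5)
Your reconstruction—split $\psi_*$ into $\mu_\nu$-eigensheaves, identify the $i=1$ piece $\mathcal{L}^{-1}\otimes f^*A$ to get a map into $\psi_*\Omega^{\bullet}_{Z/Y}(\log\Pi)$, twist by $f^*A^{-1}$, push forward, and then invoke functoriality of the edge maps of the two tautological sequences (whose base-direction comparison is exactly $\iota$)—is precisely the Viehweg--Zuo construction that the paper is recalling and citing for this lemma, and you also correctly record the one thing the paper adds, namely the extension from $Y_0$ to $\bar Y$ by reflexivity of $F^{p,q}$, local freeness of $E^{p,q}$ and $\mathrm{codim}(\bar Y\setminus Y_0)\ge 2$. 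The paper gives no further proof beyond the citation to \cite{VZ-1,VZ-2} and that remark, so there is no discrepancy.
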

\begin{rmk}{
\emph{A priori}, our comparison map $\rho^{p,q}$ is defined only on $Y_0$. That is, it is a morphism between $F^{p,q}|_{Y_0} \cong R^qf_*(\Omega^p_{X/Y}(\log \Delta) \otimes \mathcal{L}^{-1})|_{Y_0}/\mathrm{torsion}$ and $E^{p,q}|_{Y_0} \cong R^qg_* \Omega^p_{Z/Y}(\log \Pi)|_{Y_0}$. But as $F^{p,q}$ is reflexive, $E^{p,q}$ locally free and $\mathrm{codim}(\bar{Y} \setminus Y_0) \geq 2$, each comparison map $\rho^{p,q}$ extends to $\bar Y$.}
\end{rmk}

\subsubsection{Injectivity of the comparison map}\label{inj-comparison}
In order to use the comparison map to construct a negatively curved Finsler pseudometric, one {needs some} pointwise injectivity   of the 
comparison map
$$(\rho^{n-1,1}\otimes \iota)\circ \tau^{n,0}: \mathcal{O}_{\bar{Y}}=F^{n,0}\to A^{-1}\otimes E^{n-1,1}\otimes \Omega^1_{\bar{Y}}(\log  \bar{S})$$ 
{on $U\setminus T$ as maps between vector bundles,} as well as that of the induced map
\begin{align}\label{eq:induced}
\tau^1:T_{\bar{Y}}(-\log \bar{S})\to A^{-1}\otimes E^{n-1,1}.
\end{align}

Denote by $\rho^{p,q}_y$ the restriction of {$\rho^{p,q}|_{U \setminus T}$, as a map between vector bundles,} at a point $y\in {U \setminus T}$. In \cite{VZ-1}, the following fact on the injectivity of $\rho^{p,q}_y$ are obtained:
\begin{itemize}
\item[1)] $\rho^{n,0}_y$ is injective for every $y \in U \setminus T$.
\item[2)] If the family is canonically polarized, then $\rho^{p,q}_y$ is an injection for each $(p,q)$ with $p+q=n$ and for every $y\in U\setminus T$.
\end{itemize}

We {remark} that the injectivity of $\rho^{p,q}$ for all $p, q$ follows from the Kodaira-Akizuki-Nakano vanishing theorem, but that the injectivity of $\rho^{n-1,1} $ (as first observed in  \cite{PTW-18}) {only needs the Bogomolov-Sommese vanishing theorem. As the latter vanishing theorem holds for} projective maniofolds of general type, and as the canonical bundle of such a manifold is semiample if and only if it is nef, one obtains:
\begin{thm}
Let $(f:\,V \to U, \mathcal{L})\in \mathcal{M}_h(U)$ be a maximally varying family of polarized projective manifolds with nef and big canonical bundle. Then $(\rho^{n-1,1}\otimes \iota)\circ \tau^{n,0}|_{U \setminus T}$, as a map of vector bundles, is injective at every point in $U \setminus T$ where the Kodaira-Spencer map is injective.
\end{thm}
In our {general} setting of families of polarized manifold with semi-ample canonical divisors, we have the following key theorem of Viehweg-Zuo.

\begin{thm}[Viehweg-Zuo]\label{alg-inj}
Let $(f:\,V \to U, \mathcal{L})\in \mathcal{M}_h(U)$ be the polarized family as that in Theorem \ref{big-picard new}. Then the map $(\rho^{n-1,1}\otimes \iota)\circ \tau^{n,0}$ along any algebraic curve $\gamma: C\to U$  does not vanish.  
\end{thm}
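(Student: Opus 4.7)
\medskip
\noindent\textbf{Proof plan.} The plan is to argue by contradiction: assume $(\rho^{n-1,1}\otimes\iota)\circ\tau^{n,0}$ vanishes identically along an algebraic curve $\gamma:C\to U$, and derive a contradiction with the quasi-finiteness of the classifying map $U\to M_h$. The strategy is to transport the vanishing to a statement about the Higgs field of a polarized variation of Hodge structures (PVHS) on a smooth projective compactification $\bar C$, and then to use positivity of the ample bundle $A$ together with Simpson's correspondence to obtain a splitting of the VHS that forces isotriviality.

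First I would pass to a smooth projective compactification $\bar C$ of $C$ with a lift $\bar\gamma:\bar C\to\bar Y$, and, after possible semistable reduction, pull back the whole Viehweg--Zuo package, namely the family $f:X\to Y$, its cyclic covering $g:Z\to Y$, the Higgs bundles $(F,\tau)$ and $(E,\theta)$, and the comparison morphisms $\rho^{\bullet,\bullet}$, to $\bar C$. This is legitimate because the arguments recalled from \cite{VZ-1,VZ-2} are compatible with birational base change and the needed normal crossing/log-smoothness assumptions can be arranged on $\bar C$. By the commutativity of diagram \eqref{eq:3}, the vanishing hypothesis $(\rho^{n-1,1}\otimes\iota)\circ\tau^{n,0}|_{\bar C}\equiv 0$ is equivalent to $(\mathrm{id}\otimes\theta^{n,0})\circ\rho^{n,0}|_{\bar C}\equiv 0$. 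Since $F^{n,0}=\mathcal O_{\bar Y}$ and $\rho^{n,0}(1)$ is pointwise nonzero on $U\setminus T$ by the first Viehweg--Zuo injectivity assertion, its restriction to $\bar C$ saturates to a nonzero line subbundle $L\subset A^{-1}|_{\bar C}\otimes E^{n,0}|_{\bar C}$ with $\deg L\ge 0$, and the hypothesis reads that $\mathrm{id}\otimes\theta|_{\bar C}$ annihilates $L$.

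Next I would study the sub-Higgs subsheaf of $(A^{-1}|_{\bar C}\otimes E|_{\bar C},\,\mathrm{id}\otimes\theta|_{\bar C})$ generated by $L$. By Griffiths transversality and the vanishing, this subsheaf is exactly $L$ sitting in the $(n,0)$-piece with zero Higgs field. Invoking Simpson's correspondence and Deligne's semisimplicity for the PVHS on $\bar C$ coming from $g_{\bar C}:Z_{\bar C}\to\bar C$, a sub-Higgs bundle with trivial Higgs field corresponds to a unitary local subsystem; in particular $\deg L=0$, so the section $\rho^{n,0}(1)|_{\bar C}$ is everywhere nonzero. Untwisting by $A|_{\bar C}$, this produces an inclusion of the ample line bundle $A|_{\bar C}$ into $E^{n,0}|_{\bar C}$ realized by a flat, constant sub-Hodge-structure of the VHS of $Z_{\bar C}\to\bar C$ in pure Hodge type $(n,0)$.

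The final and most delicate step, which I expect to be the main obstacle, is to translate the constancy of this distinguished sub-VHS of the \emph{cyclic cover} family into isotriviality of the \emph{original} family $X_{\bar C}\to\bar C$. The idea is to use that the cyclic cover $Z_{\bar C}$ is built by taking $\nu$-th roots of a section $s$ of $\mathcal L^\nu\otimes f^*A^{-\nu}$, and that $\mathcal L^\nu\otimes f^*A^{-\nu}$ is globally generated over $V_0$; running the same argument for a spanning set of sections, the constancy of all the associated top Hodge pieces forces the $\nu$-canonical classifying data of $X_{\bar C}\to\bar C$ to be constant, hence $f_{\bar C}$ is birationally isotrivial. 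This contradicts the quasi-finiteness of the moduli map $U\to M_h$ and completes the proof. The difficulty concentrated in this last step is precisely why the semi-ample (rather than ample) canonical bundle case is subtler than the canonically polarized case, where the stronger Kodaira--Akizuki--Nakano injectivity of all $\rho^{p,q}$ yields the conclusion directly.
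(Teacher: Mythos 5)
Your setup --- reduce via diagram~\eqref{eq:3} to the vanishing $(\mathrm{id}\otimes\theta)\circ\rho^{n,0}\equiv 0$ along $\gamma$, saturate the image of $\mathcal{O}_{\bar C}$ under $\rho^{n,0}$ inside $A^{-1}|_{\bar C}\otimes E^{n,0}|_{\bar C}$ to a line bundle $L$ with $\deg L\geq 0$ that is killed by the Higgs field, and then contradict the positivity of $A$ --- is the algebraic shadow of the paper's argument. The paper phrases the same idea analytically: Zuo's semi-negativity of $\mathrm{Ker}(\theta)$ for the Hodge metric, the strict negativity of $A^{-1}$, and the curvature-decreasing property for holomorphic subbundles together make the induced singular metric on $\mathcal{O}_{\bar C}$ have strictly negative curvature current; integrating it over $\bar C$ (the log-growth of the Hodge metric near $\bar S+\bar T$ is what ensures the integral still computes the degree) gives $\deg\mathcal{O}_{\bar C}<0$, which is absurd. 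Your slope inequality is exactly the degree-theoretic version of that curvature integration.

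The genuine gap is at the step invoking Simpson's correspondence, and it propagates through the rest of the proposal. The observation you actually need is that $A|_{\bar C}\otimes L\subset E|_{\bar C}$ is a sub-Higgs bundle (it is killed by $\theta$) of the (Deligne-extended, possibly parabolic) Hodge bundle $(E,\theta)$ of a PVHS, which is semistable of (parabolic) degree zero; hence $\deg(A|_{\bar C}\otimes L)\leq 0$. But $\deg(A|_{\bar C}\otimes L)=\deg A|_{\bar C}+\deg L>0$ since $A$ is ample and $\deg L\geq 0$ --- a contradiction, and the proof should end right here. Instead you assert $\deg L=0$, which does not follow from what you cite (and is in fact impossible given the above), and then proceed to the isotriviality detour of your final paragraph. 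That detour is both unnecessary --- the slope inequality has already closed the argument --- and, more seriously, not actually carried out: the final paragraph offers no proof that constancy of the $(n,0)$-piece of the cyclic-cover VHS for a spanning family of sections of $\mathcal{L}^\nu\otimes f^*A^{-\nu}$ forces isotriviality of the original family, and you yourself flag it as ``the main obstacle.'' If you drop that last paragraph and close the slope inequality correctly, you obtain a clean algebraic counterpart to the paper's curvature argument.
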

We briefly outline the proof of this theorem, 
which used a global argument relying on the Griffiths curvature computation for Hodge metrics: Suppose that $(\rho^{n-1,1}\otimes \iota)\circ \tau^{n,0}$ vanishes along $\gamma(C)$. Then the image $\mathcal O_Y=F^{n,0}\subset E^{n,0}\otimes A^{-1}$ lies in $\mathrm{Ker}(\theta^{n-1,1}) \otimes A^{-1}$.
Note that $\mathrm{Ker}(\theta^{n-1,1})$ is semi-negatively curved for the degenerated Hodge metric (cf. \cite{Zuo-00}), which essentially follows from the Griffiths curvature computation. By taking integration of the curvature form of Hodge metric restricted to $\mathcal O_Y$ one shows that the trivial line bundle is strictly negative because of the curvature decreasing property 
 for holomorphic subbundles. This is of course a contradiction.\\[-3mm]

Very recently, {the first named author observed that the argument of Viehweg-Zuo can be made pointwise, combining with a usual maximal principle argument. His argument runs as follows:
instead of taking integration of the curvature form of Hodge metric, he evaluated} the curvature form on a special point $y_0$ in $U$, where the norm function of the constant section of $\mathcal O_Y$ with respect to the Hodge metric of $E^{n,0}\otimes A^{-1}$ restricted to $\mathcal O_Y$ via $\rho^{n,0}:\, \mathcal{O}_Y \hookrightarrow E^{n,0} \otimes A^{-1}$ { takes the maximal value, the metric being in fact a natural modification by Popa-Taji-Wu in \cite{PTW-18} of the Viehweg-Zuo's metric on $U$ to remove its singularities on $T$. This implies that the curvature form on $\mathcal O_Y$ is semi-positive at this point.}
 On the other hand, if the map $(\rho^{n-1,1}\otimes \iota)\circ \tau^{n,0}$ at this specific point  evaluated in some tangent vector vanishes, then the Griffiths curvature formula and the strict negativity of $A^{-1}$ implies that the the curvature form  on $\mathcal O_Y$ at $y_0$ is strictly negative along this tangent vector, which gives us a contradiction. {We thus have:}

\begin{thm}[\cite{Deng-18}]\label{deng}
Let $f:V\to U$ {be a family of {polarized projective} manifolds with semi-ample canonical sheaf over a {nonsingular} quasiprojective variety $U$. Assume that $f$ is of maximal variation.}  Then the map $\tau^1$ defined in \eqref{eq:induced} is a vector bundle injection on a Zariski open subset $U^\circ$ of $U$. In particular, the analytic version of Theorem~\ref{alg-inj} holds true{; I.e.,  }
the map $(\rho^{n-1,1}\otimes \iota)\circ \tau^{n,0}$ {does not vanish identically} along any holomorphic curve $\gamma: \, C \to U$ with $\gamma(C)\cap U^\circ\neq \varnothing$.
\end{thm}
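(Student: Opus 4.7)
The plan is to make the global Viehweg--Zuo curvature argument pointwise via a maximum-principle argument, exactly as alluded to in the discussion preceding the statement. Since the rank of a coherent sheaf morphism is lower semicontinuous, it suffices to exhibit a single point $y_0\in U$ at which $\tau^1$ is injective on the fiber; equivalently, at which the composition $(\rho^{n-1,1}\otimes \iota)\circ \tau^{n,0}\colon \mathcal{O}_Y\to A^{-1}\otimes E^{n-1,1}\otimes \Omega^1_{\bar Y}(\log \bar S)$ does not annihilate any tangent vector. The Zariski open subset $U^\circ$ can then be taken to be the maximal rank locus of $\tau^1$, and for any holomorphic curve $\gamma\colon C\to U$ meeting $U^\circ$, the pulled-back map $\gamma^*\tau^1$ is generically nonzero, which yields the analytic version of Theorem~\ref{alg-inj}.

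First, I would equip the trivial line bundle $\mathcal{O}_Y$ with the Hermitian metric $h$ induced by the embedding $\rho^{n,0}\colon \mathcal{O}_Y\hookrightarrow E^{n,0}\otimes A^{-1}$, using the degenerate Hodge metric on $E^{n,0}$ and a smooth negatively curved metric on $A^{-1}$. This metric has artificial singularities along $T$ coming from the cyclic cover; the Popa--Taji--Wu regularization modifies it into a metric smooth on all of $U$ that still satisfies the relevant Griffiths-type inequalities. A maximum-principle argument then shows that the function $\varphi:=\log|1|^2_h$ attains its supremum at some point $y_0\in U$, so that $\sqrt{-1}\,\partial\bar\partial\varphi(y_0)\leq 0$ as a $(1,1)$-form; equivalently, the Chern curvature of $(\mathcal{O}_Y,h)$ is semi-positive at $y_0$.

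Next, I would run the Griffiths curvature argument pointwise. Suppose for contradiction that $(\rho^{n-1,1}\otimes \iota)\circ \tau^{n,0}$ vanishes on some tangent vector $v\in T_{y_0}Y$. Then the commutative diagram (\ref{eq:3}) of comparison maps shows that, at $y_0$ and in the direction $v$, the image of the constant section lies in $\ker(\theta^{n-1,1})\otimes A^{-1}$. By Zuo's semi-negativity theorem for the kernels of Kodaira--Spencer maps on Hodge bundles together with the strict negativity of $A^{-1}$, the Chern curvature of $\ker(\theta^{n-1,1})\otimes A^{-1}$ evaluated on $(v,\bar v)$ is strictly negative. Since the curvature of a holomorphic subbundle is bounded above by the restriction of the ambient curvature (the second-fundamental-form contribution being semi-negative for a sub-object), the Chern curvature of $(\mathcal{O}_Y,h)$ at $y_0$ in the direction $v$ would be strictly negative, contradicting the semi-positivity just obtained.

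The main technical obstacle is to ensure that the supremum of $\varphi$ is actually attained at an interior point of $U$: the Hodge metric degenerates along $\bar S+\bar T$ by the norm estimates of Cattani--Kaplan--Schmid, so $\varphi$ need not be bounded above on $U$. One handles this, following Popa--Taji--Wu and the first named author, by an $\varepsilon$-perturbation of the metric by a boundary-defining log-term that forces the modified $\varphi$ to tend to $-\infty$ on $\bar Y\setminus U$, extracting the pointwise curvature inequality at the interior maximizer $y_\varepsilon$, and passing to the limit $\varepsilon\to 0$; it is this analytic input that converts the integral curvature calculation of Viehweg--Zuo into the required pointwise statement and produces the open set $U^\circ$.
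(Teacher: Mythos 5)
Your proposal is correct and follows essentially the same route the paper sketches (citing \cite{Deng-18}): modify the Hodge/$A^{-1}$-induced metric on $\mathcal{O}_Y$ à la Popa--Taji--Wu, locate an interior maximizer of the norm of the constant section to get pointwise semi-positivity of the curvature of $\mathcal{O}_Y$, and contradict it via the Griffiths curvature computation, Zuo's semi-negativity of $\ker(\theta^{n-1,1})$, and the strict negativity of $A^{-1}$ along any vector in the kernel of $\tau^1$; lower semicontinuity of rank then produces $U^\circ$ and the analytic consequence for curves.
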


\begin{conj}\label{pt-inj}
{With the assumption as that in Theorem \ref{deng},}  the map $(\rho^{n-1,1}\otimes \iota) \circ \tau^{n,0}$ is a vector bundle injection {over the open subset of $U$ with quasi-finite moduli map}.
\end{conj}
Conjecture~\ref{pt-inj} has been verified for such a family with members of Kodaira dimension one in a joint paper of the third and fourth named authors with Xin Lu \cite{LSZ}.

\subsection{Iteration of generalized Kodaira-Spencer maps}
We iterate the Higgs maps to get
\[
\tau^{n-q+1,q-1} \circ \cdots \circ \tau^{n,0} :\, F^{n,0} \to F^{n-q,q} \otimes \bigotimes^q \Omega^1_{\bar{Y}}(\log \bar{S}).
\]
{Since the Higgs field $\tau$ satisfies $\tau \wedge \tau=0$, this {iterated} composition  factors through the map}
\[
\tau^q:\, F^{n,0} \to F^{n-q,q} \otimes Sym^q\, \Omega^1_{\bar{Y}}(\log \bar{S}).
\]
As $\mathcal{O}_{\bar{Y}}$ is a subsheaf of $F^{n,0}$, the {following} composition of maps {makes sense}
\[
\xymatrix{
Sym^q \, T_{\bar{Y}}(-\log \bar{S}) \ar[r]^-{\subset} & F^{n,0} \otimes Sym^q \, T_{\bar{Y}}(-\log \bar{S}) \ar[r]^-{\tau^q \otimes \mathrm{id}}  &  F^{n-q,q} \otimes Sym^q\, \Omega^1_{\bar{Y}}(\log \bar{S}) \otimes Sym^q \, T_{\bar{Y}}(-\log \bar{S}) \ar[d]^{\mathrm{id} \otimes <,>} \\
  &  &  F^{n-q,q}.
}
\]
{We will still denote this map as $\tau^q$ by abuse of notation.}\\[2mm]
Composing this $\tau^q$ with the comparison map $\rho^{n-q,q}$, we get the \emph{iterated Kodaira-Spencer map}
\begin{equation}
  \label{eq:4}
Sym^q\, T_{\bar{Y}}(-\log \bar{S}) \xrightarrow{\tau^q} F^{n-q,q} \xrightarrow{\rho^{n-q,q}} A^{-1} \otimes E^{n-q,q}. 
\end{equation}

\subsubsection{Maximal non-zero iteration}
We define the \emph{maximal non-zero iteration of Kodaira-Spencer map} to be the
the $m$-th iterated Kodaira-Spencer map with $\rho^{n-m,m} \circ \tau^m(Sym^m \, T_{\bar{Y}}(-\log \bar{S})) \neq 0$ {and with $m$ being the largest number $m_f$ satisfying this property, if it exists.} More precisely,
\[
\xymatrix{
Sym^q\, T_{\bar{Y}}(-\log \bar{S}) \ar[r]^-{\rho^{n-m,m}\circ \tau^m \neq 0}  \ar@/_2pc/[rd]^-{=0} & A^{-1} \otimes E^{n-m,m} \ar[d]^{\mathrm{id} \otimes \theta^{n-m,m}} \\
  &  A^{-1} \otimes E^{n-m-1,m+1} \otimes \Omega^1_{\bar{Y}}(\log (\bar{S} + \bar{T})).
}
\]
Note that $\{0\}\ne \mathrm{Im}(\rho^{n-m,m} \circ \tau^m) \subset A^{-1} \otimes \mathrm{Ker}(\theta^{n-m,m})$. We call this number ${m_f}$ the \emph{maximal length of iteration} or the \emph{maximal iteration length}.
\begin{lem}\label{gen-inj}
Keeping the   assumptions as {\ref{deng}}, we have that $\rho^{n-1,1} \circ \tau^1$ is injective at the generic point of $U$, evaluated at each tangent vector. 
\end{lem}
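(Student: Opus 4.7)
The plan is to identify the composition $\rho^{n-1,1}\circ \tau^1$ in the statement of the lemma with the map also denoted $\tau^1$ in equation \eqref{eq:induced}, and then apply Theorem \ref{deng} directly. Indeed, the Higgs map $\tau^{n,0}:\mathcal{O}_{\bar Y}=F^{n,0}\to F^{n-1,1}\otimes \Omega^1_{\bar Y}(\log \bar S)$, contracted with tangent vectors, yields the classical Kodaira--Spencer map $\tau^1:T_{\bar Y}(-\log \bar S)\to F^{n-1,1}$; post-composing with the comparison map $\rho^{n-1,1}:F^{n-1,1}\to A^{-1}\otimes E^{n-1,1}$ reproduces exactly the arrow denoted $\tau^1$ in \eqref{eq:induced}. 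Thus what is to be proved is the generic fibrewise injectivity of that arrow.

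By Theorem \ref{deng}, applied to our family (which is smooth, has semi-ample canonical sheaf, and is of maximal variation, the last of these being arranged if necessary by the birational modification of the base discussed at the beginning of Section~\ref{sec-VZ}), this composed map is a vector bundle injection on some Zariski open subset $U^{\circ}\subseteq U$. In particular the induced linear map $T_{\bar Y}(-\log \bar S)_y \to (A^{-1}\otimes E^{n-1,1})_y$ is injective for every $y \in U^{\circ}$, and since $U^\circ$ contains the generic point of $U$, this is exactly the assertion that $\rho^{n-1,1}\circ \tau^1$ is injective at the generic point, evaluated at each tangent vector.

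The only substantive content is Theorem~\ref{deng} itself, whose idea is sketched immediately before its statement: one endows the image of $\rho^{n,0}:\mathcal{O}_{\bar Y}\hookrightarrow E^{n,0}\otimes A^{-1}$ with the Hodge metric (in the Popa--Taji--Wu modification that removes the artificial singularity along $T$), picks a point $y_0 \in U$ where the resulting norm function attains its maximum, and exploits the maximum principle together with the Griffiths curvature formula and the strict negativity of $A^{-1}$ to force $(\rho^{n-1,1}\otimes \iota)\circ\tau^{n,0}$ to be pointwise injective at $y_0$ in every tangent direction; openness of fibrewise injectivity propagates this to a Zariski neighbourhood $U^\circ$ of $y_0$. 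The conceptual difficulty that this pointwise argument circumvents --- and which is the real obstacle to deducing Lemma~\ref{gen-inj} from the earlier Viehweg--Zuo Theorem~\ref{alg-inj} alone --- is that one cannot in general produce algebraic curves tangent at a prescribed point to a prescribed direction inside the putative kernel distribution of $\rho^{n-1,1}\circ \tau^1$, so the curve-wise non-vanishing of Theorem~\ref{alg-inj} does not formally upgrade to pointwise fibrewise injectivity without an extra ingredient such as the maximum principle.
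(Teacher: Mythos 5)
Your proof is correct and takes essentially the same route as the paper, whose one-line proof for the general semi-ample case likewise invokes Theorem~\ref{deng}; your explicit identification of $\rho^{n-1,1}\circ\tau^1$ with the arrow denoted $\tau^1$ in \eqref{eq:induced} is a useful clarification given the overloaded notation. Your closing remark is also well-taken: the curve-wise non-vanishing of Theorem~\ref{alg-inj} would not by itself yield generic pointwise injectivity (the fibrewise kernel of $\rho^{n-1,1}\circ\tau^1$ need not integrate to an algebraic curve), and the paper's citation of Theorem~\ref{alg-inj} for the semi-ample-and-big subcase is arguably a slip for the \emph{unlabeled} Viehweg--Zuo theorem just above it, which gives pointwise injectivity directly via Bogomolov--Sommese vanishing wherever the Kodaira--Spencer map is injective.
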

\begin{proof}
This follows from Theorem~\ref{alg-inj} when the canonical divisor of a general fiber of the family is semi-ample and big and from Theorem~\ref{deng} {in general}.
\end{proof}
\begin{cor}
{Non-zero such iteration on $Y$ exists with maximal iteration length  $m_f \leq n$.}
\end{cor}
\begin{proof}
 $\rho^{n-1,1} \circ \tau^1$ is non-zero by Lemma~\ref{gen-inj}. The upper bound of $m$ follows from $\theta^{0,n}=0$.  
\end{proof}

\subsubsection{Maximal non-zero iteration of Kodaira-Spencer map along an analytic curve}\label{max-iteration-curve}
Let $V\to U$ be the smooth family  as that in Theorem \ref{deng}.  In our application we {only need to} consider an analytic map $\gamma$ from a complex analytic curve $C$ to the base manifold $U$ so that $\gamma(C)\cap U^\circ\neq\varnothing$, where $U^\circ$ is the dense Zariski open set of $U$ in Theorem \ref{deng}.  All the Higgs bundles can be pulled back to $C$, as well as the iteration process. We define the composition
\[
\tau^{p,q}_{\gamma}:\, \gamma^*F^{p,q} \xrightarrow{\gamma^*\tau^{p,q}} \gamma^*F^{p-1,q+1} \otimes \gamma^* \Omega^1_{\bar{Y}}(\log \bar{S}) \xrightarrow{\mathrm{id} \otimes \mathrm{d}\gamma} \gamma^*F^{p-1,q+1} \otimes \Omega^1_C
\]
as the \emph{Higgs map along $\gamma$}. We define $\theta_{\gamma}$ similarly. Then $(\gamma^*F,\tau_{\gamma})$ and $(\gamma^*E,\theta_{\gamma})$ are holomorphic Higgs bundles on the curve $C$. The iterated Kodaira-Spencer maps on $C$ are also defined similarly:
\[
T^{\otimes q}_C \xrightarrow{\tau^q_{\gamma}} \gamma^*F^{n-q,q} \xrightarrow{\gamma^*\rho^{n-q,q}} \gamma^*A^{-1} \otimes \gamma^*E^{n-q,q}.
\]
\begin{cor}  {Non-zero iteration of such maps along $\gamma$ exists. {I.e., it has length} $m' \geq 1$.}
\end{cor}

\begin{proof}
Since $\gamma(C)\subset U$ is Zariski dense, we know from Lemma~\ref{gen-inj} that
$\rho^{n-1,1} \circ \tau^1$ is injective at all the points of $\gamma(C)$  contained in a Zariski open subset of $U$, evaluated at all {the} tangent directions at those points. This implies that at least $\gamma^*(\rho^{n-1,1} \circ \tau^1_{\gamma})(T_C)$ is non-zero.
\end{proof}

By its definition, the maximal non-zero iteration of Kodaira-Spencer map along  $\gamma$ of length $m'$ has the properties: $\gamma^*(\rho^{n-m',m'} \circ \tau^{m'}_{\gamma})(T^{\otimes m'}_C) \neq 0$  and  $\mathrm{Im}(\gamma^*(\rho^{n-m',m'} \circ \tau^{m'}_{\gamma})) \subset \gamma^*A^{-1} \otimes \mathrm{Ker}(\theta_{\gamma})$.  {These} properties are crucial for creating a negatively curved Finsler pseudometric along $\gamma$ in section~\ref{sec_finsler-metric}.
\begin{rmk}
One should be warned that the maximal length of iteration along $\gamma$ could be \emph{shorter} than the maximal length of iteration of the original family. This is because the iterated Kodaira-Spencer maps $Sym^q\, T_{\bar{Y}}(-\log \bar{S}) \xrightarrow{\rho^{n-q,q} \circ \tau^q} A^{-1} \otimes E^{n-q,q}$ with $q>1$ are not injective in general.\\[1mm]
%
\end{rmk}

\subsection{The Finsler (pseudo)metric}\label{sec_finsler-metric}
\begin{defi}[Finsler metric]\label{def:Finsler}
	Let \(E\) be a  holomorphic vector bundle on a complex manifold $X$. A \emph{Finsler pseudometric} on \(E\) is a 
	 \emph{continuous}  function \(h:E\to  \mathclose[ 0,+\infty\mathopen[ \) such that 
	\[h(av) = |a|h(v)\]
	for any \(a\in \C \) and \(v\in  E\). We call $h$ a Finsler metric if it is nondegenerate, i.e, if $h(v)=0$ only when $v=0$.
\end{defi} 
\noindent
\ \ As in \cite{VZ-1}, we construct a Finsler pseudometric via the maximal iterated Kodaira-Spencer map
\begin{equation}
  \label{eq:5}
Sym^m \, T_{\bar{Y}}(-\log \bar{S}) \xrightarrow{\rho^{n-m,m} \circ \tau^m} A^{-1} \otimes E^{n-m,m}.  
\end{equation}
It is given as follows. Consider $g_{A^{-1}} \otimes g_{hod}$ on $A^{-1} \otimes E^{n-m,m}$, where $g_A$ is the Fubini-Study metric of the ample line bundle $A$ and $g_{hod}$ is the Hodge metric on the Hodge bundle $E$. Pulling it back via $\rho^{n-m,m} \circ \tau^m$ and taking {the} $m$-th root, we get our desired Finsler pseudometric on $T_{\bar{Y}}(-\log \bar{S})$.

\subsubsection{Modification along the boundary}\label{rmk*}
In fact, Viehweg and the fourth named author used some modified version of the Finsler pseudometric described above in order to have the right kind of curvature property. 
This method of {modifying metrics on the log-tangent bundle} appeared first in the second named author's thesis {on} extending meromorphic maps \cite{Lu-91}.\\[2mm]
First we construct an auxiliary function associated to the boundary divisor $\bar{S}$. Denote by $\bar{S}_1,\dots, \bar{S}_p$ the  
components of $\bar{S}$. Let $L_i$ be the line bundle with section $s_i$ such that $\bar{S}_i=\mathrm{div}(s_i)$. Equip each $L_i$ with a smooth hermitian metric $g_i$. Let $l_i:= -\text{log}\,\lvert s_i \lvert^2_{g_i}$ and $l_S:= l_1l_2\cdots l_p$. Recall that the Hodge metric $g_{hod}$ has extra degeneration along $\bar{T}$ since the Hodge bundle $(E,\theta)$ has logarithmic poles along $\bar{T}$. To control the asymptotic behaviour of $g_{hod}$ near $\bar{T}$, we construct another auxiliary function associated to the divisor $\bar{T}$, in a similar manner as for $l_S$: Denote by $\bar{T}_1,\dots, \bar{T}_q$ the non-singular components of $\bar{T}$. Let $L'_i$ be the line bundle with section $t_i$ such that $\bar{T}_i=\mathrm{div}(t_i)$. Equip each $L'_i$ with a smooth hermitian metric $g'_i$. Let $l'_i:= -\text{log}\,\lvert t_i \lvert^2_{g'_i}$ and $l_T:= l'_1l'_2\cdots l'_q$.\\[1mm]
Now for each positive integer $\alpha$, we define a new singular hermitian metric $g_{\alpha}:= g_A \cdot l^{\alpha}_S \cdot l^{\alpha}_T$ on $A$.\\[-3mm]

Before entering our setting of the Viehweg-Zuo construction, we 
remark that by suitably modifying a (pseudo)metric on the tautological line bundle of the projective log tangent bundle satisfying the hypothesis of Theorem~\ref{thm:Big Picard}, one can already obtain the strongly negative curvature property 
that the holomorphic sectional curvature is bounded from above by a negative constant.
\begin{prop}\label{abstract-curv-bound}
Let $(X,D)$, $\gamma:\, \mathbb{D}^* \to X \setminus D$ be the same as in Theorem~\ref{thm:Big Picard}. Let $h$ be the Finsler pseudometric on $T_X(-\log D)$ (or equivalently, a semi-norm on $\mathcal{O}_{{\mathbb P}_{\text{alg}}(T_X(-\log D)^\vee)}(-1)$) satisfying the curvature inequality $(\ref{eq:condition})$. Then there is a Finsler pseudometric $h_{\alpha}$ defined as $h \cdot l^{-\alpha}_D$, where $l_D$ is the above auxiliary function of the boundary divisor $D$ and $\alpha$ is some positive integer, such that its holomorphic sectional curvature is bounded from above by a negative constant, i.e.:
\[
-dd^c\log \lvert \gamma'(z) \lvert^2_{h_{\alpha}} \lesssim - \mu_\alpha
\] 
where $\mu_\alpha$ is the semi-positive $(1,1)$ form associated with the possibly degenerate hermitian metric $(\mathbb{P}\gamma')^{-1} (\lvert\ \lvert_{h_{\alpha}}\circ \gamma_*)^2=((\mathbb{P}\gamma')^{-1}{h_{\alpha}})|\gamma_*|^2$ on $\mathbb{D}^*$ and the inequality above holds in the sense of currents.
\end{prop}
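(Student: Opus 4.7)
The plan is to expand $\hess \log |\gamma'(z)|^2_{h_\alpha}$ using the decomposition $h_\alpha = h\cdot l_D^{-\alpha}$, dispatch the $h$-factor by the curvature hypothesis \eqref{eq:condition}, and extract from the damping factor $l_D^{-\alpha}$ a Poincar\'e-type positive term which, for $\alpha$ sufficiently large, absorbs $\mu_\alpha$. Writing $\log l_D=\sum_i\log l_i$ one has
\[
\hess \log|\gamma'(z)|^2_{h_\alpha} \;=\; \hess \log|\gamma'(z)|^2_h \;-\; \alpha\sum_i \gamma^*\hess \log l_i \;\geq\; \gamma^*\omega\;-\;\alpha\sum_i\gamma^*\hess\log l_i
\]
by \eqref{eq:condition}. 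The standard identity
\[
-\hess \log l_i \;=\; \frac{\mathrm{d} l_i\wedge \mathrm{d}^c l_i}{l_i^2} \;-\; \frac{\hess l_i}{l_i}
\]
exhibits the first summand as a semi-positive Poincar\'e-type $(1,1)$-form, while $\hess l_i$ is (up to sign) the Chern form of $(L_i,g_i)$, hence uniformly bounded on $X$. Rescaling each metric $g_i$ by a small positive constant ensures $l_i\geq M$ everywhere for an $M$ as large as desired, so $\hess l_i/l_i$ is $O(M^{-1})$ in $\omega$-norm and can be absorbed into $\tfrac12\gamma^*\omega$, yielding
\[
\hess \log|\gamma'(z)|^2_{h_\alpha} \;\geq\; \tfrac12\gamma^*\omega \;+\; \alpha\sum_i \gamma^*\Bigl(\frac{\mathrm{d} l_i\wedge \mathrm{d}^c l_i}{l_i^2}\Bigr).
\]

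The remaining task is to show that for $\alpha$ sufficiently large the right-hand side is bounded below by a positive multiple of $\mu_\alpha$ uniformly on ${\mathbb D}^*$, which I would verify locally and then globalize by compactness of $X$. Away from $D$ the damping factor $l_D^{-\alpha}(\gamma)$ is bounded and continuity of $h$ on the compact locus $\{l_D\leq N\}$ gives $|\gamma'|^2_{h_\alpha}\lesssim |\gamma'|^2_\omega$, so $\tfrac12\gamma^*\omega$ alone suffices. Near a point of a component $D_1=\{z_1=0\}\subset D$ in local coordinates, continuity of $h$ as a Finsler pseudometric on $T_X(-\log D)$ forces the transverse blow-up $|\gamma'|^2_h \lesssim |\gamma_1'|^2/|\gamma_1|^2+\sum_{\mu\geq 2}|\gamma_\mu'|^2$ along the trajectory, whereas a direct computation gives
\[
\gamma^*\Bigl(\frac{\mathrm{d} l_1\wedge \mathrm{d}^c l_1}{l_1^2}\Bigr)\;\asymp\;\frac{|\gamma_1'|^2\;\sn\,\mathrm{d}z\wedge\mathrm{d}\bar z}{|\gamma_1|^2\,(\log|\gamma_1|^{-2})^2}\qquad\text{and}\qquad l_D^{-\alpha}(\gamma)\;\asymp\;(\log|\gamma_1|^{-2})^{-\alpha}.
\]
Matching powers, the transverse part $(|\gamma_1'|^2/|\gamma_1|^2)\cdot l_D^{-\alpha}$ of $\mu_\alpha$ is dominated by the Poincar\'e contribution precisely when $\alpha\,(\log|\gamma_1|^{-2})^{\alpha-2}\gtrsim 1$, i.e.\ for any $\alpha\geq 2$; the tangential contributions $|\gamma_\mu'|^2\cdot l_D^{-\alpha}$ are absorbed by $\tfrac12\gamma^*\omega$ since $l_D^{-\alpha}\leq 1$ in that regime.

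The main obstacle is exactly this local matching: the logarithmic singularity of the continuous Finsler pseudometric $h$ along $D$, intrinsic to the sheaf $T_X(-\log D)$, must be outpaced by the Poincar\'e contribution of $l_D^{-\alpha}$, and it is this balance that forces both the preliminary rescaling of the metrics $g_i$ and the choice $\alpha\geq 2$. A finite cover of $X$ adapted to the irreducible components of $D$ then promotes the pointwise bounds to the uniform current-level inequality $\hess\log|\gamma'(z)|^2_{h_\alpha}\gtrsim \mu_\alpha$ on ${\mathbb D}^*$, equivalently $-\hess\log|\gamma'(z)|^2_{h_\alpha}\lesssim -\mu_\alpha$, which is the claim.
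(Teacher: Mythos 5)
Your argument is correct and follows the paper's overall strategy: expand $\hess\log|\gamma'|^2_{h_\alpha}$, use the hypothesis \eqref{eq:condition} for the $h$-part, absorb the $\hess l_i/l_i$ term by rescaling the metrics $g_i$ (so $l_i$ is uniformly large), and then dominate $\mu_\alpha$ by compactness of $X$ once $\alpha\geq 2$. The meaningful difference is where the transverse positivity near $D$ comes from. In its displayed inequality chain, the paper drops the semi-positive Poincar\'e term $\alpha\sum_i\gamma^*\bigl(\mathrm{d}l_i\wedge\mathrm{d}^c l_i/l_i^2\bigr)$ and then cites [Lu-91, \S 4, Prop.~1] and [VZ-1, Lemma~7.1] for the bound $\omega_{FS}-\alpha\Sigma\,\hess l_i/l_i\ \geq\ l_D^{-2}\omega_\alpha$ with $\omega_\alpha$ a Hermitian metric on $T_X(-\log D)$; note that this bound cannot come from $\omega_{FS}$ and $-\alpha\,\hess l_i/l_i$ alone, since on the log frame vector $z_1\partial_1$ the left side contributes only $O(|z_1|^2)$ while $l_D^{-2}\omega_\alpha$ contributes $(\log|z_1|^{-2})^{-2}$ — so the cited lemma is necessarily using the Poincar\'e term as its source of positivity. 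You retain that term from the outset and carry out the local matching against $\mu_\alpha$ near each stratum of $D$ explicitly, which is exactly the content delegated to the citation, and your version is therefore self-contained and makes the role of the $\alpha\geq 2$ threshold transparent. (The paper writes $\alpha>2$, you allow $\alpha=2$; both are fine since $\alpha=2$ already yields a uniform constant in the transverse comparison.)
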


\begin{proof}
By direct computation we have 
\begin{align}
\begin{array}{l}
dd^c\log \lvert f'(z) \lvert^2_{h_{\alpha}} = dd^c\log \lvert f'(z) \lvert^2_h  
 - \alpha f^* \Sigma \frac{\mathrm{d} \mathrm{d}^cl_i}{l_i}  + \frac{\sqrt{-1}}{2\pi} \alpha f^* \Sigma \frac{\partial l_i \wedge \overline{\partial l_i}}{l^2_i} \\
\geq dd^c\log \lvert f'(z) \lvert^2_h  
 - \alpha f^* \Sigma \frac{\mathrm{d} \mathrm{d}^cl_i}{l_i} 
\geq f^* \left( \omega_{FS} - \alpha \Sigma \frac{\mathrm{d} \mathrm{d}^cl_i}{l_i}  \right).
\end{array}
\end{align}
Here we have used the inequality $dd^c\log \lvert f'(z) \lvert^2_h \geq f^*\omega_{FS}$. And by the same argument in {\cite[\S4, Proposition~1]{Lu-91}} (see also the proof of Lemma~7.1 in \cite{VZ-1}), we can find a positive definite Hermitian
form $\omega_{\alpha}$ on $T_X(- \log D)$ such that
\[
\omega_{FS} - \alpha \Sigma \frac{\mathrm{d} \mathrm{d}^cl_i}{l_i}  \geq l^{-2}_D  \cdot \omega_{\alpha}.
\]  
Note that $l^{-2}_D \cdot \omega_{\alpha}$ can also be regarded as a semi-norm on the dual of the tautological line bundle. So if we choose $\alpha >2$, then $l^{-2}_D  \cdot \omega_{\alpha} \gtrsim h_{\alpha}$ by the compactness of $X$. Therefore, we get the desired bound on the negative holomorphic sectional curvature 
$
-dd^c\log \lvert f'(z) \lvert^2_{h_{\alpha}} \lesssim - (\mathbb{P}f')^{-1} h_{\alpha}.
$ 
\end{proof}

Now we come back to our setting of the Viehweg-Zuo construction. We first note that

\begin{lem}\label{curv-estimate}
$\Theta(A,g_{\alpha})$ dominates 
the K\"ahler form $\omega_{FS}: = \Theta(A,g_A)$ on $\bar{Y}$ as currents.
\end{lem}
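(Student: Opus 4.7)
The plan is to compute $\Theta(A,g_{\alpha})$ explicitly and show that the extra contributions produced by multiplying $g_A$ by $l_S^{\alpha}\, l_T^{\alpha}$ can be absorbed into a small multiple of $\omega_{FS}$ after rescaling the auxiliary metrics $g_i$ and $g'_j$.

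First, I would record the local weight calculation: writing $|\sigma|^2_{g_{\alpha}}=|\sigma|^2_{g_A}\cdot l_S^{\alpha}\cdot l_T^{\alpha}$, a local weight $\phi_{\alpha}$ for $g_{\alpha}$ satisfies $\phi_{\alpha}=\phi_A-\alpha\log l_S-\alpha\log l_T$, whence
\[
\Theta(A,g_{\alpha})=\omega_{FS}-\alpha\sum_{i=1}^{p}dd^c\log l_i-\alpha\sum_{j=1}^{q}dd^c\log l'_j.
\]
For each boundary factor I would expand via the standard identity
\[
dd^c\log l_i=\frac{dd^c l_i}{l_i}-\frac{dl_i\wedge d^c l_i}{l_i^{\,2}},
\]
observing that the second term is semi-positive (it is a nonnegative multiple of $\partial l_i\wedge\overline{\partial l_i}$). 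Hence $-dd^c\log l_i\ge -\frac{dd^c l_i}{l_i}$ as currents, and analogously for $l'_j$.

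Next I would handle the remaining term using Poincar\'e--Lelong: $dd^c l_i=\Theta(L_i,g_i)-[\bar S_i]$. The delta contribution $-[\bar S_i]/l_i$ vanishes as a current, because $1/l_i\to 0$ on $\bar S_i$, so $dd^c l_i/l_i=\Theta(L_i,g_i)/l_i$ as currents on $\bar Y$. By compactness of $\bar Y$ there is a constant $C>0$ with $|\Theta(L_i,g_i)|\le C\,\omega_{FS}$ and $|\Theta(L'_j,g'_j)|\le C\,\omega_{FS}$ for every $i,j$. By rescaling each $g_i$ and $g'_j$ by a sufficiently small positive constant one can arrange $l_i\ge N$ and $l'_j\ge N$ globally on $\bar Y$ for any prescribed $N>0$; this modification does not change the curvature $\Theta(L_i,g_i)$.

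Combining these estimates yields
\[
\Theta(A,g_{\alpha})\ge \omega_{FS}-\frac{\alpha(p+q)C}{N}\omega_{FS}=\Bigl(1-\tfrac{\alpha(p+q)C}{N}\Bigr)\omega_{FS},
\]
and choosing $N$ large enough that the parenthetical factor is, say, $\ge \tfrac{1}{2}$ gives $\Theta(A,g_{\alpha})\gtrsim\omega_{FS}$. The main technical point is to justify the distributional identity for $dd^c\log l_i$ near $\bar S_i$, where $l_i$ blows up; this is a standard local computation in coordinates where $|s_i|\approx|z_1|$, showing that $\log l_i\in L^1_{\mathrm{loc}}$ and that the expansion above holds as currents. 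After that, the remainder is a clean scaling argument.
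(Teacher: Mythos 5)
Your proposal is correct and follows essentially the same route as the paper: expand $\Theta(A,g_\alpha)=\Theta(A,g_A)-\alpha\sum dd^c\log l_i-\alpha\sum dd^c\log l'_j$, drop the semi-positive cross terms $\frac{\sqrt{-1}}{2\pi}\frac{\partial l_i\wedge\overline{\partial l_i}}{l_i^2}$, observe that $dd^c l_i$ is the fixed smooth form $\Theta(L_i,g_i)$ away from $\bar S_i$, and rescale the $g_i$, $g'_j$ to make $l_i$, $l'_j$ uniformly large so that $\alpha\sum\frac{\Theta(L_i,g_i)}{l_i}$ is absorbed by a small multiple of $\omega_{FS}$. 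You merely spell out the Poincar\'e--Lelong and $L^1_{\rm loc}$ bookkeeping more explicitly than the paper does.
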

\begin{proof}
This follows from the computation
\begin{align*} 
\Theta(A,g_{\alpha}) &= \Theta(A,g_A \cdot l^{\alpha}_S \cdot l^{\alpha}_T)\\ &= \Theta(A,g_A) - \alpha \Sigma \frac{\mathrm{d} \mathrm{d}^cl_i}{l_i} - \alpha \Sigma \frac{\mathrm{d} \mathrm{d}^cl'_i}{l'_i} + \frac{\sqrt{-1}}{2\pi} \alpha \Sigma \frac{\partial l_i \wedge \overline{\partial l_i}}{l^2_i} + \frac{\sqrt{-1}}{2\pi} \alpha \Sigma \frac{\partial l'_i \wedge \overline{\partial l'_i}}{{l'}^2_i}\\
&\geq \Theta(A,g_A) - \alpha \Sigma \frac{\mathrm{d} \mathrm{d}^cl_i}{l_i} - \alpha \Sigma \frac{\mathrm{d} \mathrm{d}^cl'_i}{l'_i} \geq c \cdot \Theta(A,g_A), 
\end{align*}
which holds for some positive constant $c$. Note that one can rescale $g_i$ (respectively $g'_i$) to make $l_i$ (respectively $l'_i$) sufficiently large and leave $\mathrm{d} \mathrm{d}^cl_i$ (respectively $\mathrm{d} \mathrm{d}^cl'_i$) unchanged.
\end{proof}

\begin{rmk} (1) The same computations as above show, by the hypothesis on $h$, that $dd^c\log \lvert f'(z) \lvert^2_{h_{\alpha}}$ dominates $dd^c\log \lvert f'(z) \lvert^2_h$ and $f^*\omega_{FS}$ as currents. 
(2) As we mentioned above, the second named author used this type of modification of metrics to prove his extension theorem (cf. {\cite[\S4]{Lu-91}}). 
 Later, Viehweg and the fourth named author applied it  in {\cite[\S7]{VZ-1}} to the Viehweg-Zuo  metric of the family over $U$ and obtained the curvature estimate in Lemma~\ref{curv-estimate}. Since the family concerned in \cite{VZ-1} is canonically polarized, one can move the branch divisor of the cyclic covering such that the discriminant locus $T$ intersects with the analytic curve $\gamma(C)$ only at the smooth part of $T$, and the intersection is transversal. Then, the monodromy of the pull-back local system around $\gamma^*T$ is finite, and the pull-back Hodge metric $\gamma^*g_{hod}$ is bounded (see section~5 of \cite{VZ-1} for details). Popa-Taji-Wu also observed that a similar modification along $\bar{T}$ applies without violating the curvature estimate and chose $\alpha$ sufficiently large such that the singular hermitian metric $g^{-1}_{\alpha} \otimes g_{hod}$ is bounded (in fact continuous) near $\bar{T}$ without the canonically polarized hypothesis (cf. {\cite[\S3.1]{PTW-18}}). This means that the Finsler pseudometric on the log tangent bundle induced by $g^{-1}_{\alpha} \otimes g_{hod}$ is bounded, which is important for our curvature estimate. Note that here we use the property that the Hodge metrics have at most logarithmic growth along $\bar{S} + \bar{T}$, which is guaranteed by the study of the higher dimensional asymptotic behavior of the Hodge metric in \cite{CKS-86}.    
\end{rmk}

\subsubsection{The curvature inequalities}\label{sec-curv-bound}
Now we consider an analytic map $\gamma$ from a Riemann surface $C$ (for instance, $C= \mathbb{D}^*$) to the base manifold $U$. Let $m$ be the maximal length of iteration along $\gamma$.\\[-3mm]

It is very natural to use the hermitian metric $g^{-1}_{\alpha}\otimes g_{hod}$ and the iterated Kodaira-Spencer map to construct a Finsler pseudometric $F_{\alpha}$ on $T_{\bar{Y}}(-\log \bar{S})$:
\begin{equation}\label{finsler-metric}
\lvert v \lvert^2_{F_{\alpha}} := \lvert \rho^{n-m,m}\circ \tau^m(v^{\otimes m}) \lvert^{2/m}_{g^{-1}_{\alpha} \otimes g_{hod}},  \textrm{      for $v \in T_{\bar{Y}}(-\log \bar{S})$.}
\end{equation}

We first state a curvature inequality associated to $F_{\alpha}$ \emph{which validates the hypothesis in our criterion for big Picard theorem}. This inequality is given, though not explicitly, in {\cite[\S7]{VZ-1}} and holds in general once we have Theorem \ref{deng} in hand. We repeat the proof here only for consistency.
\begin{thm}[The curvature inequality]\label{curv-ineq-for-criteria}
Let  $V \to U$ be the same family as that in Theorem~\ref{deng}.
Fix the analytic map $\gamma:\, C \to U$ so that $\gamma(C)\cap U^\circ\neq\varnothing$ where $U^\circ$ is the dense Zariski open set of $U$ in Theorem \ref{deng} so that $\tau^1|_{U^\circ}:T_{U^\circ}\to A^{-1}\otimes E^{n-1,1}|_{U^\circ}$ is injective. Then for any positive integer $\alpha$,
the Finsler (pseudo)metric $F_{\alpha}$ constructed above satisfies the following curvature inequality
\[
\d\d^c \log \lvert \gamma'(z) \lvert^2_{F_{\alpha}} \gtrsim \gamma^*\omega_{FS}.
\]  
\end{thm}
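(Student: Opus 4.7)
The plan is to reinterpret the left-hand side as $\hess$ applied to the logarithm of the norm of a holomorphic section of a Hermitian vector bundle on $C$, and then bound the curvature. First, by the very definition \eqref{finsler-metric},
\[
\log\lvert\gamma'(z)\rvert^2_{F_\alpha} \;=\; \frac{1}{m}\,\log\lvert\sigma(z)\rvert^2_{g_\alpha^{-1}\otimes g_{hod}},
\qquad\text{where}\qquad
\sigma(z):=\gamma^*(\rho^{n-m,m}\circ\tau^m)\bigl(\gamma'(z)^{\otimes m}\bigr).
\]
Thus $\sigma$ is a holomorphic section of the pullback $\gamma^*(A^{-1}\otimes E^{n-m,m})$ on $C$. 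By the maximality of $m$ along $\gamma$, $\sigma\not\equiv 0$, and since $\mathrm{Im}(\rho^{n-m,m}\circ\tau^m)\subset A^{-1}\otimes\mathrm{Ker}(\theta^{n-m,m})$, the section $\sigma$ actually takes values in the subbundle $K:=\gamma^*\bigl(A^{-1}\otimes\mathrm{Ker}(\theta^{n-m,m})\bigr)$.

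Next I will invoke the standard curvature identity for the norm of a holomorphic section of a Hermitian bundle $(E,h)$:
\[
\hess\,\log\lvert s\rvert^2_h \;\geq\; -\frac{\langle\sn\,\Theta_h(E)\,s,\,s\rangle_h}{\lvert s\rvert^2_h},
\]
valid as an inequality of currents on the base (the zero locus of $s$ contributing positive Dirac-type mass). I will apply this to $s=\sigma$, viewing $\sigma$ as a section of $K$ with the induced subbundle metric. Since the Griffiths curvature of a holomorphic subbundle is bounded above, via its second fundamental form, by the restriction of the ambient curvature, it then suffices to establish
\[
\sn\,\Theta\bigl(A^{-1}\otimes\mathrm{Ker}(\theta^{n-m,m}),\;g_\alpha^{-1}\otimes g_{hod}\bigr) \;\lesssim\; -\omega_{FS}\otimes\mathrm{Id}
\]
in the Griffiths sense.

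For this bound the tensor-product curvature formula splits the left-hand side into two pieces. The contribution from $\mathrm{Ker}(\theta^{n-m,m})$ with the Hodge metric $g_{hod}$ is Griffiths semi-negative by Zuo's theorem \cite{Zuo-00}, the key input from variations of Hodge structure. The contribution from $A^{-1}$ is $-\Theta(A,g_\alpha)$, which by Lemma~\ref{curv-estimate} dominates $-\omega_{FS}$ up to a positive multiplicative constant in the sense of currents. Adding these two yields the desired Griffiths negativity, and dividing by $m$ gives $\hess\log\lvert\gamma'\rvert^2_{F_\alpha}\gtrsim\gamma^*\omega_{FS}$ as claimed.

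The principal technical obstacle is justifying these as inequalities of currents in the face of two degeneracies. First, $g_{hod}$ is singular along $\bar S+\bar T$; this is controlled by the Popa--Taji--Wu modification, i.e. choosing $\alpha$ large enough that $g_\alpha^{-1}\otimes g_{hod}$ is bounded near $\bar S+\bar T$, using the logarithmic-growth estimates of \cite{CKS-86}. Second, the iterated Kodaira--Spencer map may degenerate along proper analytic subsets, so that $K$ is \emph{a priori} only a coherent subsheaf and the second-fundamental-form bound holds only generically. This is handled by working on the Zariski open locus $U^\circ\subset U$ from Theorem~\ref{deng}, which meets $\gamma(C)$ by hypothesis; the resulting current inequality then extends across the bad locus by the standard extension property for plurisubharmonic functions with locally integrable singularities.
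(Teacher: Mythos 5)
Your proposal follows essentially the same route as the paper: rewrite $\hess\log\lvert\gamma'\rvert^2_{F_\alpha}$ via the Poincar\'e--Lelong inequality, split the curvature of the tensor product $A^{-1}\otimes E^{n-m,m}$, use the Griffiths/Zuo semi-negativity on the Hodge-bundle side and Lemma~\ref{curv-estimate} for $\Theta(A,g_\alpha)\gtrsim\omega_{FS}$. The paper works through the intermediate Hermitian metric $(\mathrm{d}\gamma)^*F_\alpha$ on $T_C$ (Poincar\'e--Lelong for that line bundle plus a chain of curvature-decreasing steps through $N\subset\gamma^*T_Y(-\log S)$, $N^{\otimes m}\subset\gamma^* \mathrm{Sym}^mT_Y(-\log S)$, and the image under $\rho^{n-m,m}\circ\tau^m$), whereas you consolidate this into a single application of the $\hess\log\lvert s\rvert^2\geq-\langle\sn\Theta s,s\rangle/\lvert s\rvert^2$ inequality for $\sigma$; both packagings are equivalent.

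There is one point where your argument is imprecise and should be tightened. You write that $\sigma$ ``takes values in the subbundle $K:=\gamma^*\bigl(A^{-1}\otimes\mathrm{Ker}(\theta^{n-m,m})\bigr)$,'' citing $\mathrm{Im}(\rho^{n-m,m}\circ\tau^m)\subset A^{-1}\otimes\mathrm{Ker}(\theta^{n-m,m})$. That containment holds when $m$ is the maximal iteration length on $Y$, but the $m$ that guarantees $\sigma\not\equiv 0$ is the maximal iteration length \emph{along} $\gamma$, which can be strictly shorter (as the paper explicitly warns). For that $m$ one only knows $\theta_\gamma(\sigma)=0$, i.e.\ the image lies in $\mathrm{Ker}(\theta_\gamma)\subset\gamma^*E^{n-m,m}$, not in the pullback of the global kernel $\mathrm{Ker}(\theta^{n-m,m})$. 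The conclusion is unaffected because the Griffiths curvature formula for Hodge bundles gives $\langle\sn\,\Theta(E^{n-m,m},g_{hod})\,\sigma,\sigma\rangle(\gamma',\overline{\gamma'})\leq 0$ as soon as $\theta(\sigma)$ annihilates the direction $\gamma'$ --- which is exactly what $\theta_\gamma(\sigma)=0$ says --- and one then adds the curvature-decreasing inequality for the subsheaf $\mathrm{Ker}(\theta_\gamma)$ over $C$. So the fix is just to replace $\mathrm{Ker}(\theta^{n-m,m})$ by $\mathrm{Ker}(\theta_\gamma)$ and invoke the directional Griffiths estimate rather than Zuo's semi-negativity of the full kernel subbundle. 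Your remarks about the Popa--Taji--Wu boundedness of $g_\alpha^{-1}\otimes g_{hod}$ and working generically then extending as currents are in line with what the paper does.
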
 
\begin{proof}
By the Poincar\'e-Lelong formula, we know that
$\d\d^c \log \lvert \gamma'(z) \lvert^2_{F_{\alpha}} = - \Theta(T_C, (\mathrm{d}\gamma)^*F_{\alpha}) + R \geq - \Theta(T_C, (\mathrm{d}\gamma)^*F_{\alpha})$, where $R$ is the ramification divisor of $\gamma$. Denote by $N$ the saturation of the image of $\d\gamma :\, T_C \to \gamma^*T_Y(-\log S)$. Then we have the following curvature current estimate
\begin{align}\label{est1} 
\Theta(T_C, (\mathrm{d}\gamma)^*F_{\alpha}) &\leq \Theta(N,\gamma^*F_{\alpha}) = \frac{1}{m} \Theta(N^{\otimes m},\gamma^*F^{\otimes m}_{\alpha})\\\nonumber
&\leq \frac{1}{m} \gamma^*\Theta(Sym^m T_Y(-\log S), (\rho^{n-m,m} \circ \tau^m)^*(g^{-1}_{\alpha} \otimes g_{hod}))|_{N^{\otimes m}}   \\\nonumber
&\leq \frac{1}{m} \gamma^* \Theta(A^{-1} \otimes E, g^{-1}_{\alpha} \otimes g_{hod})|_{\gamma^*(\rho^{n-m,m} \circ \tau^m)(N^{\otimes m})}\\\nonumber
&= -\frac{1}{m} \gamma^*\Theta(A,g_{\alpha}) + \frac{1}{m} \gamma^*\Theta(E,g_{hod})|_{\gamma^*A \otimes \gamma^*(\rho^{n-m,m} \circ \tau^m)(N^{\otimes m})}. 
\end{align}
Recall that $m$ is the maximal length of the iteration along $\gamma$ so that $\gamma^*A \otimes \gamma^*(\rho^{n-m,m} \circ \tau^m)(N^{\otimes m})$ lies in the kernel of $\theta_{\gamma}$. Therefore, as the last term in %
the estimate (\ref{est1}) 
is semi-negative by the Griffiths curvature computation (see {\cite[Lemma~(7.18)]{Sch-73}} or \cite{Zuo-00}), we have as currents that
\[
\Theta(T_C, (\mathrm{d}\gamma)^*F_{\alpha}) \leq -\frac{1}{m} \gamma^*\Theta(A,g_{\alpha}).
\]
And hence, we have
\[
\d\d^c \log \lvert \gamma'(z) \lvert^2_{F_{\alpha}} \geq \frac{1}{m} \gamma^*\Theta(A,g_{\alpha}) \gtrsim \gamma^*\omega_{FS},
\]
the last inequality being given by Lemma~\ref{curv-estimate}
\end{proof}
Using the curvature inequality in Theorem~\ref{curv-ineq-for-criteria} and the estimate in Proposition~\ref{abstract-curv-bound}, Viehweg and the fourth named author  
showed that $F_{\alpha}$ is strongly negatively curved along the analytic curve.%
\begin{thm}[Viehweg-Zuo]\label{curv-bound}
Keeping the assumptions on the family $V \to U$ the same as in Theorem~\ref{deng}.
Fix the analytic map $\gamma:\, C \to U$. Then there exists a positive integer $\alpha$ (depending on the maximal length along $\gamma$) and  $c_{\alpha}>0$ (depending on $\alpha$) such that the curvature of $F_\alpha$ satisfies
\[
K_{F_{\alpha}}(v) \leq -c_{\alpha}
\]
for any nonzero tangent vector $v:= \gamma'(z)$ of $\gamma$.
\end{thm}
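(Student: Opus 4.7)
The plan is to derive the strong negative curvature bound $K_{F_\alpha}(\gamma'(z))\leq -c_\alpha$ by combining the current estimate of Theorem~\ref{curv-ineq-for-criteria} with a uniform pointwise comparison between the Finsler pseudometric $F_\alpha$ and a smooth K\"ahler form on $T_{\bar Y}(-\log \bar S)$. Theorem~\ref{curv-ineq-for-criteria} already gives as currents on $C$
\[
\hess \log |\gamma'(z)|^2_{F_\alpha}\gtrsim \gamma^*\omega_{FS},
\]
valid for every positive integer $\alpha$. Since the sought conclusion $K_{F_\alpha}(\gamma'(z))\leq -c_\alpha$ is precisely the inequality $\hess\log|\gamma'(z)|^2_{F_\alpha}\gtrsim |\gamma'(z)|^2_{F_\alpha}\,\omega_C$ for a reference K\"ahler form $\omega_C$ on $C$, the task reduces to establishing the pointwise bound
\[
|v|^2_{F_\alpha}\lesssim |v|^2_{\omega_{FS}}\quad\text{for every } v\in T_{\bar Y}(-\log \bar S),
\]
once $\alpha$ is chosen sufficiently large in terms of the maximal iteration length $m$ along $\gamma$.

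The key technical step is this boundedness of $F_\alpha$. By the defining formula (\ref{finsler-metric}), $F_\alpha^m$ is the pullback via $\rho^{n-m,m}\circ\tau^m$ of the singular Hermitian metric $g_\alpha^{-1}\otimes g_{hod}$ on $A^{-1}\otimes E^{n-m,m}$; since $\rho^{n-m,m}\circ\tau^m$ is a morphism of coherent sheaves on the compact manifold $\bar Y$, it suffices to show that $g_\alpha^{-1}\otimes g_{hod}$ itself extends to a bounded (in fact continuous) Hermitian pseudometric on $A^{-1}\otimes E^{n-m,m}$ over $\bar Y$. By the Cattani-Kaplan-Schmid norm estimates \cite{CKS-86}, the Hodge metric has at most polynomial logarithmic growth along $\bar S + \bar T$, i.e.\ $g_{hod}\leq C\,(l_S l_T)^{N}g_{sm}$ for some smooth reference Hermitian metric $g_{sm}$ and integer $N=N(m)$, while by construction $g_\alpha^{-1}=g_A^{-1}\, l_S^{-\alpha}l_T^{-\alpha}$. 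Choosing $\alpha>N$ leaves factors $l_S^{\alpha-N}l_T^{\alpha-N}$ in the denominator that absorb the logarithmic growth and make $g_\alpha^{-1}\otimes g_{hod}$ continuous across $\bar S+\bar T$; this is the Popa-Taji-Wu refinement of the Viehweg-Zuo modification recalled in \S\ref{rmk*}, which crucially dispenses with the canonically polarized hypothesis. Taking $m$-th roots preserves boundedness, so $F_\alpha$ is dominated by any smooth Hermitian metric on $T_{\bar Y}(-\log \bar S)$.

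With both ingredients in hand, the conclusion is immediate: along $\gamma$ we have $|\gamma'(z)|^2_{F_\alpha}\lesssim |\gamma'(z)|^2_{\omega_{FS}}$, hence
\[
\hess\log|\gamma'(z)|^2_{F_\alpha}\gtrsim \gamma^*\omega_{FS}\gtrsim |\gamma'(z)|^2_{F_\alpha}\,\omega_C,
\]
which is $K_{F_\alpha}(\gamma'(z))\leq -c_\alpha$ for a constant $c_\alpha>0$ depending only on the implied constants, and through them on $\alpha$ (hence on $m$). The main obstacle is the boundedness step: it rests on the precise CKS asymptotic description of the Hodge metric near the normal-crossing boundary, and it is here that $\alpha$ must be chosen large enough to dominate the polynomial logarithmic growth of $g_{hod}$ on $E^{n-m,m}$. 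Once this choice is made, the remainder of the argument is a purely formal chase of inequalities along the curve.
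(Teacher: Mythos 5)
Your overall plan—combine the curvature inequality from Theorem~\ref{curv-ineq-for-criteria} with a pointwise upper bound on $F_\alpha$—is the right idea, and your treatment of the boundedness of $g_\alpha^{-1}\otimes g_{hod}$ via the CKS norm estimates and the Popa--Taji--Wu modification is correct and matches what the paper relies on. However, the crucial comparison step is wrong, and the argument as written does not give a uniform curvature bound.

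The gap is in the asserted inequality $|v|^2_{F_\alpha}\lesssim |v|^2_{\omega_{FS}}$ for $v\in T_{\bar Y}(-\log\bar S)$. Boundedness of $F_\alpha$ on the compact $\bar Y$ gives $F_\alpha\lesssim g_{sm}$ for any \emph{smooth, non-degenerate} Hermitian metric $g_{sm}$ on the log tangent bundle, but it does \emph{not} give $F_\alpha\lesssim\omega_{FS}\big|_{T_{\bar Y}(-\log\bar S)}$: the K\"ahler form $\omega_{FS}$, restricted via the inclusion $T_{\bar Y}(-\log\bar S)\hookrightarrow T_{\bar Y}$, degenerates along $\bar S$ at the rate $|s|^2$ (for $\bar S=\{z_1=0\}$, one has $|z_1\partial_{z_1}|^2_{\omega_{FS}}\sim|z_1|^2$), whereas $F_\alpha$ decays only polynomial-logarithmically, like $l_S^{-(\alpha-N)/m}$. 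Thus near $\bar S$ the bounded metric $F_\alpha$ is actually \emph{much larger} than $\omega_{FS}\big|_{T(-\log\bar S)}$, and the purported inequality is reversed. Concretely, with $\gamma(z)=z$ approaching a smooth point of $\bar S$, the ratio $|\gamma'(z)|^2_{\omega_{FS}}/|\gamma'(z)|^2_{F_\alpha}\sim|z|^2\,l^{c}\to 0$, so your chain $\hess\log|\gamma'|^2_{F_\alpha}\gtrsim\gamma^*\omega_{FS}\gtrsim|\gamma'|^2_{F_\alpha}\omega_C$ yields a curvature bound that degenerates to zero near $\bar S$, not a uniform negative constant.

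What rescues the argument—and what Proposition~\ref{abstract-curv-bound} (following Lu-91, \S4, and Viehweg--Zuo, Lemma~7.1) supplies—is a sharper lower bound on $\Theta(A,g_\alpha)$. The proof of Theorem~\ref{curv-ineq-for-criteria} already gives the stronger inequality $\hess\log|\gamma'|^2_{F_\alpha}\geq\frac1m\gamma^*\Theta(A,g_\alpha)$, and the term $\frac{\sqrt{-1}}{2\pi}\alpha\sum\partial l_i\wedge\overline{\partial l_i}/l_i^2$, dropped in Lemma~\ref{curv-estimate}, is precisely what does not degenerate to order $|s|^2$: retaining it one obtains, for $\alpha$ large, $\Theta(A,g_\alpha)\geq (l_S l_T)^{-2}\,\omega_\alpha$ \emph{as Hermitian forms on} $T_{\bar Y}(-\log\bar S)$, where $\omega_\alpha$ is a smooth positive definite Hermitian form on the log tangent bundle. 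Since $F_\alpha\lesssim (l_Sl_T)^{(N-\alpha)/m}g_{sm}$ by the CKS estimates, choosing $\alpha\geq N+2m$ gives $F_\alpha\lesssim(l_S l_T)^{-2}\omega_\alpha$, and now the comparison $\gamma^*\Theta(A,g_\alpha)\gtrsim|\gamma'|^2_{F_\alpha}\omega_C$ holds with a uniform constant, including near $\bar S$. This is the step you must insert in place of the direct comparison with $\omega_{FS}$.
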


\begin{rmk}
(1) Although $C=\mathbb{C}$ in \cite{VZ-1}, all the arguments above work for a general Riemann surface $C$, except the final step in {\cite[Lemma~7.9]{VZ-1}} where the Ahlfors-Schwarz lemma is used.  \\
(2) Since $(\mathrm{d}\gamma)^*F_{\alpha}$ is locally bounded on $T_C$ by construction, the inequality above entails an inequality in the sense of currents just as that in Theorem~\ref{abstract-curv-bound}.
\end{rmk}

Before entering the next section, we list two crucial points to the the arguments we present there: 
\begin{itemize}
\item \emph{logarithmic growth of the Hodge metric near boundary:} 
In fact this is the crucial point of Viehweg-Zuo's curvature estimates; and those estimates are crucial to our argument.\\[-4mm]
\item \emph{local boundedness of the Finsler pseudometric}: 
Used in defining the order function $T(r)$.
\end{itemize}

\section{Big Picard theorem via negative curvature}\label{sect_BP}

In this section we shall prove Theorem~\ref{thm:Big Picard} and Theorem~\ref{big-picard new} by using the negative curvature method  inspired by the argument in \S9 of Griffiths-King \cite{Gri-King-73}. Now, we have two negatively curved Finsler pseudometrics: $h_{\alpha}$ of Proposition~\ref{abstract-curv-bound} and $F_{\alpha}$ of Theorem~\ref{curv-bound}. Since $h_{\alpha}$ shares the same curvature properties as $F_{\alpha}$ thanks to Proposition~\ref{abstract-curv-bound},  we only present the proof of Theorem~\ref{big-picard new} using $F_{\alpha}$. The proof of Theorem~\ref{thm:Big Picard} using $h_{\alpha}$ is verbatim.\\[-1mm]

We identify $\mathbb{D}^*$ with the inverted punctured unit disk $\mathbb{D}^\circ:=\{ z \in \mathbb{C}; \text{  $|z| > 1$}\}$ in order to match the usual notations in Nevanlinna theory for entire curves. We set $\mathbb{D}_{r_0,r}:= \{ r_0 \leq |z| <r\}\subset \mathbb{C}^*$  and $\mathbb{D}_r:=\{ z \in \mathbb{C}; |z| <r\}$. Denote by $\gamma:\, \mathbb{D}^\circ \to U$ the analytic map in question. Then we want to show that $\gamma$ extends over the point at infinity. We fix an $r_0>1$ from now on.\\[-2mm]

By the constructions in Subsection~\ref{sec_finsler-metric}, there is a Finsler pseudometric $F_{\alpha}$ on $T_Y(-\log S)$ (resp. $h_{\alpha}$ on $T_X(-\log D)$), or equivalently a semi-norm on the tautological line bundle $\mathcal{O}_{{\mathbb P}_{\text{alg}}(T_Y(-\log S)^\vee)}(-1)$, 
with the following inequalities of curvature currents
\begin{align*} 
\gamma^*\omega_{FS} \lesssim dd^c \log ({\mathbb P}\gamma')^{-1}\lvert\ \lvert^2_{F_{\alpha}} \\
  \omega_{\gamma} \lesssim dd^c \log ({\mathbb P}\gamma')^{-1}\lvert\ \lvert^2_{F_{\alpha}} ,
\end{align*}
by Theorems~\ref{curv-ineq-for-criteria} and \ref{curv-bound} (or by the hypothesis in Theorem~\ref{thm:Big Picard} and Proposition~\ref{abstract-curv-bound}) respectively. Here $\omega_{\gamma}$ is the semi-positve (1,1)-form associated to the semi-norm $({\mathbb P}\gamma')^{-1}(\lvert\ \lvert_{F_{\alpha}}\circ \gamma_*)$ on $T_{\mathbb{D}^\circ}$.  

\begin{rmk}\label{rem:proof}
For the construction of $F_{\alpha}$, remember that one needs to change the birational model of $U$ in the construction of those two Higgs bundles $(F,\tau)$ and $(E,\theta)$. In our application, we can always assume that the image of $\gamma$ is Zariski dense by replacing $\bar{Y}$ by the Zariski closure of $\gamma(\mathbb{D}^\circ)$. Then the analytic map lifts to $\tilde{\gamma}:\, \mathbb{D}^\circ \to \tilde{U}$, where $\tilde{U}$ is the new birational model for the new zariski closure base space. Clearly, it suffices to prove the extension property for $\tilde{\gamma}$.  
\end{rmk}

By the above argument, the following Nevanlinna characteristic {(or order)} functions 
\begin{equation} \label{eq:order}
T_{\gamma^*\omega_{FS}}(r) := \int^r_{r_0} \frac{d\rho}{\rho} \int_{\mathbb{D}_{r_0,\rho}} \gamma^*\omega_{FS}
\end{equation}
\begin{equation*} 
T_{\omega_{\gamma}} (r) := \int^r_{r_0} \frac{d\rho}{\rho} \int_{\mathbb{D}_{r_0,\rho}} \omega_{\gamma}
\end{equation*}  
are both dominated (i.e. $\lesssim$) by 
\begin{align}\label{eq:dominant}
\int^r_{r_0} \frac{d\rho}{\rho} \int_{\mathbb{D}_{r_0,\rho}} dd^c \log ({\mathbb P}\gamma')^{-1}\lvert\ \lvert^2_{F_{\alpha}} \le  \int^r_{r_0} \frac{d\rho}{\rho} \int_{\mathbb{D}_{r_0,\rho}} dd^c \log \lvert \gamma'(z) \lvert^2_{F_{\alpha}} 
\end{align}

It is elementary and classical that the asymptotic behavior of $T_{\gamma^*\omega_{FS}}(r)$ as $r\to \infty$ characterizes whether $\gamma$ can be extended over $\infty$  (see \emph{e.g.} \cite[2.11.~cas local]{Dem97b} or \cite[Remark 4.7.4.(\lowerromannumeral{2})]{NW}).  
 \begin{lem}\label{lem:criteria}
	$T_{\gamma^*\omega_{FS}}(r)=O(\log r)$ if and only if $\gamma$ can be  extended holomorphically over $\infty$. \qed
\end{lem}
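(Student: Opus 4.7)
The plan is to establish the equivalence by relating $T_{\gamma^*\omega_{FS}}(r)$ to the monotone area function $A(\rho):=\int_{\mathbb{D}_{r_0,\rho}}\gamma^*\omega_{FS}$, and then invoking a classical finite-area extension result. This reduces the lemma, as the paper itself notes, to well-known results in value distribution theory.

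For the direction \emph{extension implies the order bound}, I would work in the inverted coordinate $w=1/z$, so that extension of $\gamma$ across $\infty$ amounts to extending $\tilde\gamma(w):=\gamma(1/w)$ holomorphically across $w=0$ into $Y$. Granted such an extension, $\tilde\gamma^*\omega_{FS}$ is a smooth $(1,1)$-form on a neighborhood of $0$, so its total mass over the annulus $\{1/r\le |w|\le 1/r_0\}$ is uniformly bounded in $r$. Translating back, $A(\rho)$ is uniformly bounded, and hence
\[
  T_{\gamma^*\omega_{FS}}(r)\ =\ \int_{r_0}^{r}\frac{A(\rho)}{\rho}\,d\rho\ =\ O(\log r).
\]

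For the converse, suppose $T_{\gamma^*\omega_{FS}}(r)\le C\log r$ for all large $r$. Since $A$ is monotone non-decreasing, if $A(\rho)$ were unbounded one could pick $R_0$ with $A(\rho)\ge 2C$ for all $\rho\ge R_0$; integrating gives $T_{\gamma^*\omega_{FS}}(r)\ge 2C\log(r/R_0)$ for $r\ge R_0$, which eventually exceeds $C\log r$ and contradicts the hypothesis. Hence $A(\rho)$ is uniformly bounded, i.e.\ the image of $\gamma$ has finite area in $Y$ with respect to $\omega_{FS}$.

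The only substantive step is then to deduce holomorphic extension into $Y$ from the finite-area condition, and this is the main obstacle. I would apply Bishop's extension theorem to the graph $\Gamma_\gamma\subset(\mathbb{D}^\circ\setminus\{\infty\})\times Y$: finiteness of the area of the $Y$-projection forces the closure of $\Gamma_\gamma$ in a neighborhood of $\{\infty\}\times Y$ to be a one-dimensional analytic subset, and since the source has complex dimension one and $Y$ is projective, this analytic extension of the graph is automatically the graph of a holomorphic map $\bar\gamma:\mathbb{D}^\circ\cup\{\infty\}\to Y$ (any putative indeterminacy point of a meromorphic map from a $1$-dimensional domain to a projective variety can be removed). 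This last step is classical and I would simply quote \cite[2.11, cas local]{Dem97b} or \cite[Remark~4.7.4(ii)]{NW} for the precise statement.
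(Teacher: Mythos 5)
The paper offers no proof of this lemma; it is stated as a known fact (note the tombstone already built into the statement), with the preceding sentence referring the reader to \cite[2.11.~cas local]{Dem97b} and \cite[Remark~4.7.4.(ii)]{NW}. Your argument is a correct reconstruction of the standard proof behind those references. The forward direction is as you say: in the coordinate $w=1/z$, smoothness of $\tilde\gamma^*\omega_{FS}$ near $0$ bounds the annular areas $A(\rho)$ uniformly, giving $T_{\gamma^*\omega_{FS}}(r)=O(\log r)$. For the converse, your elementary monotonicity argument correctly reduces $T_{\gamma^*\omega_{FS}}(r)=O(\log r)$ to boundedness of $A(\rho)$, hence to finiteness of $\int_{\mathbb{D}^*}\tilde\gamma^*\omega_{FS}$; this bounds the $2$-dimensional Hausdorff measure of the graph $\Gamma_{\tilde\gamma}\subset\mathbb{D}^*\times Y$ with respect to any product K\"ahler metric, so Bishop's extension theorem (across the closed analytic set $\{0\}\times Y$) makes $\overline{\Gamma_{\tilde\gamma}}$ analytic in $\mathbb{D}\times Y$. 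One small tightening worth making explicit: take the irreducible component of $\overline{\Gamma_{\tilde\gamma}}$ containing $\Gamma_{\tilde\gamma}$; its projection to $\mathbb{D}$ is a proper analytic cover of degree one, and composing the inverse of its normalization with the $Y$-projection yields the extension, which is automatically holomorphic rather than merely meromorphic because the source is one-dimensional, exactly as you observe.
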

We will need the following Green-Jensen formula for the punctured disk.
\begin{prop}[Green-Jensen formula on punctured disk]
Let $\phi$ be function on $\mathbb{D}^\circ$ such that $\phi$ is differentiable outside a discrete set of points disjoint from ${\mathbb D}_{r_0}$ and $dd^c\phi$ exists as a current. Then for $0<r_0<r$, 
\begin{align}\label{eq:Jensen}
\int^r_{r_0} \frac{d\rho}{\rho} \int_{\mathbb{D}_{r_0,\rho}} dd^c\phi =  \int_{\partial\mathbb{D}_r} \phi \cdot d^c\log |z| - \int_{\partial\mathbb{D}_{r_0}} \phi \cdot d^c\log |z| - (\log \frac r{r_0})\cdot \int_{\partial\mathbb{D}_{r_0}} d^c\phi.  \ \ \ \ \ \ \ \ \ \ \qed
\end{align}
\end{prop}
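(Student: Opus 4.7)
The plan is to reduce the iterated integral on the left to a single weighted integral by Fubini, then apply Green's identity on the annulus against a well-chosen harmonic function. Swapping the order of integration gives
\begin{equation*}
\int^r_{r_0} \frac{d\rho}{\rho} \int_{\mathbb{D}_{r_0,\rho}} dd^c\phi \;=\; \int_{\mathbb{D}_{r_0,r}} \!\Bigl(\int_{|z|}^r \tfrac{d\rho}{\rho}\Bigr) dd^c\phi \;=\; \int_{\mathbb{D}_{r_0,r}} \log\tfrac{r}{|z|}\cdot dd^c\phi,
\end{equation*}
where the right-hand side is paired with the current $dd^c\phi$. The exchange of orders is legitimate once one knows that $dd^c\phi$ restricts to a locally finite signed measure on every compact subannulus, which is precisely the content of the hypothesis that $dd^c\phi$ exists as a current.

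Next, set $G(z):=\log(r/|z|)$, which is harmonic on $\mathbb{C}^*$, so $dd^c G=0$ on $\mathbb{D}_{r_0,r}$. Applying Green's identity to the pair $(G,\phi)$ on the annulus yields
\begin{equation*}
\int_{\mathbb{D}_{r_0,r}} G\cdot dd^c\phi \;=\; \int_{\partial\mathbb{D}_{r_0,r}} \bigl(G\cdot d^c\phi \,-\, \phi\cdot d^c G\bigr),
\end{equation*}
and with the convention $\partial\mathbb{D}_{r_0,r}=\partial\mathbb{D}_r - \partial\mathbb{D}_{r_0}$ the boundary contributions split as follows. On $\partial\mathbb{D}_r$ we have $G\equiv 0$, so only the term $-\phi\cdot d^c G=\phi\cdot d^c\log|z|$ survives, giving $\int_{\partial\mathbb{D}_r}\phi\cdot d^c\log|z|$. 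On $\partial\mathbb{D}_{r_0}$ the function $G$ is the constant $\log(r/r_0)$, so $G\cdot d^c\phi$ integrates to $\log(r/r_0)\int_{\partial\mathbb{D}_{r_0}} d^c\phi$, while $-\phi\cdot d^c G=\phi\cdot d^c\log|z|$ produces $\int_{\partial\mathbb{D}_{r_0}}\phi\cdot d^c\log|z|$; both of these appear with the minus sign coming from the inner boundary orientation. Collecting the three pieces reproduces the claimed identity \eqref{eq:Jensen} exactly.

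The only technical point is that $\phi$ may have a discrete set of singularities $\{p_i\}\subset\mathbb{D}_{r_0,r}$, so that Green's identity is not immediately available. The hypothesis places these away from $\mathbb{D}_{r_0}$, in particular away from the boundary circle $\partial\mathbb{D}_{r_0}$, so the boundary integrals on the right-hand side are unambiguous. To justify the Stokes computation I would remove small disks $B_\epsilon(p_i)$, apply the smooth Green's identity on $\mathbb{D}_{r_0,r}\setminus\bigcup B_\epsilon(p_i)$, and let $\epsilon\downarrow 0$. Near each $p_i$ the existence of $dd^c\phi$ as a current forces a local decomposition $\phi = c_i\log|z-p_i|^2 + \text{(smooth)}$; then $\int_{\partial B_\epsilon(p_i)}G\cdot d^c\phi\to c_i\,G(p_i)$ while $\int_{\partial B_\epsilon(p_i)}\phi\cdot d^c G=O(\epsilon\log\epsilon)\to 0$, and these residues on the right are exactly matched by the Dirac masses of the current $dd^c\phi$ picked up by $\int_{\mathbb{D}_{r_0,r}}G\cdot dd^c\phi$ on the left. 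The main obstacle is therefore bookkeeping: maintaining consistent sign conventions for $d^c$, correctly orienting the inner boundary, and verifying that the smooth identity survives the limit $\epsilon\to 0$; all are standard in Nevanlinna-theoretic treatments of Jensen's formula for currents.
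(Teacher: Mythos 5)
The paper states this proposition with an immediate \qed and offers no proof of its own, so there is nothing internal to compare against; your derivation is precisely the standard one. The Fubini swap correctly yields $\int_{\mathbb{D}_{r_0,r}}\log\frac{r}{|z|}\,dd^c\phi$, and pairing $G(z)=\log(r/|z|)$ with $\phi$ via Green's identity on the annulus produces exactly the three boundary terms of \eqref{eq:Jensen}: $G$ vanishes on $\partial\mathbb{D}_r$, is constant $\log(r/r_0)$ on $\partial\mathbb{D}_{r_0}$, and $d^cG=-d^c\log|z|$ throughout, so the bookkeeping you carry out checks out sign by sign.

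One technical overclaim to flag: it is \emph{not} true that the existence of $dd^c\phi$ as a current forces a local decomposition $\phi = c_i\log|z-p_i|^2 + (\text{smooth})$ near each singular point. A locally integrable $\phi$ can produce a perfectly good current $dd^c\phi$ without any logarithmic pole structure (e.g.\ $\phi=\log\log(1/|z-p_i|)$, or a potential of a non-atomic measure). What the hypothesis does guarantee is that $dd^c\phi$ is a distribution of order zero on compacta away from the boundary circles, and the cleaner way to close the argument is by mollification: replace $\phi$ by $\phi*\chi_\varepsilon$, apply the smooth Green's identity, and pass to the limit using that $dd^c(\phi*\chi_\varepsilon)\to dd^c\phi$ weakly while the boundary terms converge by the differentiability of $\phi$ near $\partial\mathbb{D}_{r_0}$ and (for a.e.\ $r$) near $\partial\mathbb{D}_r$. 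Alternatively, in the actual applications of this paper $\phi=\log|\gamma'(z)|^2_{F_\alpha}$ does have precisely logarithmic singularities at the ramification divisor, so your residue computation is valid there; but as a proof of the proposition as stated, the structural claim about $\phi$ should be replaced by a regularization argument.
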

\vspace{2mm}
By using \eqref{eq:Jensen}, we have for {a fixed} $r_0>1$ that, 
\begin{equation}
  \label{eq:2}
\int^r_{r_0} \frac{d\rho}{\rho} \int_{\mathbb{D}_{r_0,\rho}} dd^c \log \lvert \gamma'(z) \lvert^2_{F_{\alpha}}
 = \int_{\partial\mathbb{D}_r} \log \lvert \gamma'(z) \lvert^2_{F_{\alpha}} \cdot d^c \log |z| + O(\log r).
\end{equation}
We define the first term of the right hand side of (\ref{eq:2}) to be the \emph{modified proximity function} $m_{\omega_{\gamma}}(r)$.
%
\begin{lem}\label{log-derivitive}
Denote by $\frac{d}{d s} := r \cdot \frac{d}{d r}$ the logarithmic derivative. Then
\[
m_{\omega_{\gamma}}(r) \leq \log \frac{d^2 T_{\omega_{\gamma}} (r)}{d s^2} + O(\log r).
\]  
\end{lem}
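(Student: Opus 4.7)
The plan is to express both sides in polar coordinates as averages over the circle $\partial\mathbb{D}_r$ and then conclude by the concavity of $\log$ (that is, Jensen's inequality on probability measures). No hard analytic input should be needed beyond the very definitions.

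First, I would unwind the definitions in polar coordinates $z = re^{i\theta}$. Writing the Finsler length squared as $\phi(z) := |\gamma'(z)|^2_{F_\alpha}$, the semi-positive $(1,1)$-form is $\omega_\gamma = \phi \cdot \tfrac{i}{2\pi} dz\wedge d\bar z = \tfrac{1}{\pi}\phi \, dx\wedge dy = \tfrac{r}{\pi}\phi \, dr\wedge d\theta$. Setting $A(r) := \int_{\mathbb{D}_{r_0,r}}\omega_\gamma$, so $T_{\omega_\gamma}(r) = \int_{r_0}^{r} A(\rho)\,\frac{d\rho}{\rho}$, an immediate differentiation gives
\[
\frac{dT_{\omega_\gamma}}{ds}(r) = r\frac{dT_{\omega_\gamma}}{dr}(r) = A(r), \qquad \frac{d^2 T_{\omega_\gamma}}{ds^2}(r) = r\,\frac{dA}{dr}(r) = \frac{r^2}{\pi}\int_{0}^{2\pi}\phi(re^{i\theta})\,d\theta.
\]
Using the standard identity $\int_{\partial \mathbb{D}_r} f \cdot d^c\log|z| = \frac{1}{2\pi}\int_{0}^{2\pi} f(re^{i\theta})\,d\theta$, the modified proximity function reads
\[
m_{\omega_\gamma}(r) = \frac{1}{2\pi}\int_{0}^{2\pi} \log\phi(re^{i\theta})\,d\theta,
\]
and the preceding formula rewrites as
\[
\frac{1}{2\pi}\int_{0}^{2\pi}\phi(re^{i\theta})\,d\theta \;=\; \frac{1}{2r^2}\,\frac{d^2 T_{\omega_\gamma}}{ds^2}(r).
\]

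Next, I would apply Jensen's inequality, valid because $\log$ is concave and $d\theta/(2\pi)$ is a probability measure on $\partial\mathbb{D}_r$:
\[
\frac{1}{2\pi}\int_{0}^{2\pi}\log\phi(re^{i\theta})\,d\theta \;\le\; \log\!\left(\frac{1}{2\pi}\int_{0}^{2\pi}\phi(re^{i\theta})\,d\theta\right).
\]
Substituting the two computations above yields
\[
m_{\omega_\gamma}(r) \;\le\; \log\!\left(\frac{1}{2r^2}\,\frac{d^2 T_{\omega_\gamma}}{ds^2}(r)\right) \;=\; \log\frac{d^2 T_{\omega_\gamma}}{ds^2}(r) \;-\; 2\log r \;-\; \log 2,
\]
which is the claimed inequality, the $-2\log r - \log 2$ being absorbed in the $O(\log r)$ term.

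The only mild subtlety, not a real obstacle, is to justify that $\log\phi$ is integrable on almost every circle $|z|=r$ so that the left-hand side is well-defined and finite; this follows from the local integrability of $\log|\gamma'|^2_h$ assumed in Theorem~\ref{thm:Big Picard} (and in Theorem~\ref{curv-bound} the analogous property for $F_\alpha$ is ensured by the local boundedness of the Finsler pseudometric near $\bar T$ recorded at the end of Section~\ref{sec-VZ}, combined with the logarithmic growth of the Hodge metric). Apart from this, the argument is purely a calculus-level application of Jensen's inequality, and tracking normalization constants only produces the $O(\log r)$ term.
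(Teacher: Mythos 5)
Your proof is correct and takes essentially the same route as the paper: both express $m_{\omega_\gamma}(r)$ and $\frac{d^2 T_{\omega_\gamma}}{ds^2}(r)$ as circle averages of $\phi = |\gamma'|^2_{F_\alpha}$ (equivalently, via $d^c\log|z|$ on $\partial\mathbb{D}_r$) and then apply the concavity of $\log$ (Jensen's inequality). The paper phrases the second-derivative identity as $\frac{1}{r}\frac{d}{dr}\bigl(r\frac{d}{dr}T_{\omega_\gamma}\bigr) = \int_{\partial\mathbb{D}_r}\xi\,d^c\log|z|$ and absorbs any normalization constants into $O(\log r)$, exactly as you do.
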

\begin{proof}
Note that we can write $\omega_{\gamma} = \lvert \gamma'(z) \lvert^2_{F_{\alpha}} \cdot  \frac{\sqrt{-1}}{2\pi} dz \wedge d\bar{z}$.
Denote by $\xi := \lvert \gamma'(z) \lvert^2_{F_{\alpha}}$ for simplicity.
By direct computation one finds that
\[
\frac{1}{r} \, \frac{d}{dr} \left( r \frac{d}{dr} T_{\omega_{\gamma}} (r) \right) =  \int_{\partial \mathbb{D}_r} \xi \cdot d^c \log |z|.
\]
Using the concavity of the logarithmic function, we obtain
\begin{align*} 
m_{\omega_{\gamma}} (r) &= \int_{\partial\mathbb{D}_r} \log \xi \cdot d^c \log |z|
 \leq \log \int_{\partial \mathbb{D}_r} \xi \cdot d^c \log |z| \\ 
&= \log \left\{ \frac{1}{r} \, \frac{d}{dr} \left( r \frac{d}{dr} T_{\omega_{\gamma}} (r) \right) \right\} 
 = \log \left\{ r^{-2} \cdot \frac{d^2}{ds^2} T_{\omega_{\gamma}} (r) \right\} \\ &= -2 \, \log r + \log \frac{d^2 T_{\omega_{\gamma}} (r)}{d s^2}. 
\end{align*}  \end{proof}
\vspace{-3mm}
Applying the Calculus lemma twice, we obtain for some $ \epsilon,\delta>0$ that $$\frac{d^2 T_{\omega_{\gamma}} (r)}{d s^2} \leq r^{2+\epsilon} \cdot T_{\omega_{\gamma}} (r)^{2+\delta} \,\,\, ||,$$
where $||$ here means that the inequality holds for $r$ outside a set of finite Lebesque measure. Thus 
\[
m_{\omega_{\gamma}}(r) \leq (2+\delta)\cdot \log T_{\omega_{\gamma}} (r) + O(\log r) \, ||.
\]
Combining \eqref{eq:dominant} and \eqref{eq:2}, we get the inequalities
\begin{align*} 
T_{\omega_{\gamma}} (r) &\lesssim m_{\omega_{\gamma}}(r) + O(\log r) \lesssim  \log T_{\omega_{\gamma}} (r) + O(\log r) \, ||; \\
T_{\gamma^*\omega_{FS}} (r) &\lesssim m_{\omega_{\gamma}}(r) + O(\log r) \lesssim  \log T_{\omega_{\gamma}} (r) + O(\log r) \, ||. 
\end{align*}
The first implies that $T_{\omega_{\gamma}} (r) = O(\log r)$, which then combines with the second to yield
\[
T_{\gamma^*\omega_{FS}}(r) = O(\log r). 
\]
By Lemma~\ref{lem:criteria}, $\gamma$ extends holomorphically over infinity and completes our proof of Theorem~\ref{thm:Big Picard}.

\begin{rmk}\label{relaxation}{
One can see that the assumption on the family in Theorem~\ref{big-picard new} can be relaxed to a family of polarized manifolds with maximal variation of moduli if we require the image of the holomorphic map $\gamma:\, \mathbb{D}^* \to U$ to be not contained in a proper subvariety of $U$. We choose such a family whose base is nonsingular and a proper subvariety $Z$ so that the map $\tau^1$ defined in subsection~\ref{inj-comparison} is injective outside $Z$. Since our $\gamma(\mathbb{D}^*)$, being zariski dense, is not contained in $Z$, the arguments in sections 2 and 3 go through, giving the extension of $\gamma$; see out setup just before Section~\ref{comparison1}. This completes the proof Theorem~\ref{big-picard new}. }
\end{rmk}

\begin{rmk} It is a classical fact{, up to adding the term $O(\log r)$, that} the charateristic function $$T_{({\mathbb P}\gamma')^*{\mathcal O}(1)} :=\int^r_2 \frac{d\rho}{\rho} \int_{\mathbb{D}^\circ_\rho} dd^c \log ({\mathbb P}\gamma')^{-1}\lvert\ \lvert^2_{h_{\alpha}}$$ is independent of a \emph{non-degenerate} Finsler metric $h_{\alpha}$ 
chosen on ${\mathcal O}(-1)=\mathcal{O}_{{\mathbb P}_{\text{alg}}(T_Y(-\log S)^\vee)}(-1)$.
So the estimate above also gives us the so called tautological inequality in our case in the form:
\begin{align}\label{taut}
T_{({\mathbb P}\gamma')^*{\mathcal O}(1)}(r) = O(\log r).
\end{align}
\noindent
In fact, the tautological inequality {was first established by McQuillan for entire curves and requires no hypothesis on curvature, albeit with extra terms on the right side of (\ref{taut}).
It generalizes easily to the case of punctured disks and we state a version where the holomorphic curve lies outside the boundary divisor (cf. \cite[\S 29]{Vojta}, which has a nice proof more amenable to arithmetic geometry).}
{\begin{thm}[Tautological Inequality for the punctured disk]\label{Tauto} 
Let $X$ be a projective manifold with a Fubini-Study metric $\omega_{FS}$ and let $D$ be a simple normal crossing divisor on $X$. 
Let $\gamma:\mathbb{D}^\circ\to X\setminus D$  be a  holomorphic map. Then $\displaystyle T_{({\mathbb P}\gamma')^*O(1)}(r) 
\le O(\log^+ T_{\gamma^*\omega_{FS}}(r)) + O(\log r)\ \Vert.\ \ \ \ \ \ \ \ \ \ \ \ \square$
\end{thm}}

Our proof of Theorem~\ref{thm:Big Picard} in the next section also  {proves this inequality, but more geometrically.}
Note that Theorem~\ref{thm:Big Picard} derives easily from this ineqality and (\ref{taut}) by choosing a hermitian metric $h$ dominating the possibly degenerate $h_\alpha$. (The proximity function with respect to the former then dominates that of the latter and so, up to an $O(\log r)$ term, so does the characteristic functions.)
\end{rmk}

\section{Big Picard theorem via the lemma on the logarthmic derivative}\label{sect_BP2}

In a similar vein as that in establishing the fundamental vanishing theorem for (symmetric or jet) differentials, pioneered by Green-Griffiths \cite{GG79} and completed by Siu-Yeung \cite{S-Y97} and Demailly \cite{Dem97b} {via the logarithmic derivative lemma, we give another proof of Theorem~\ref{thm:Big Picard} in this section. {Theorem~\ref{big-picard new} is then an immediate corollary given our setup, as shown in Remark~\ref{relaxation}.}}

\subsection{Preliminary in Nevannlina theory} Let $(X,\omega)$ be a compact K\"ahler manifold. Consider a holomorphic map $\gamma:\mathbb{D}^*\to X$. We identify $\mathbb{D}^*$ with $\mathbb{D}^\circ:=\{ z \in \mathbb{C}; \text{  $|z| > 1$}\}$ via $z\mapsto \frac{1}{z}$ as before. We can extend $\gamma$ to a $\gamma: \mathbb{D}^*\cup \mathbb{D}^\circ \to X$ by setting $\gamma(z)=\gamma(\frac 1{z})$ for $z\in\mathbb{D}^\circ$. 

Fix any $r_0>1$. Recall that the \emph{characteristic function} in \eqref{eq:order} is defined by 
$$
 T_{\gamma^*\omega}(r):= \int_{r_0}^{r}\frac{d\rho}{\rho}\int_{\bD_{r_0,\rho}}\gamma^*\omega.
$$  
Let us first state a couple of useful inequalities.
\begin{lem}
	Write $\log^+x:={\rm max}(\log x ,0)$. Then
	\begin{align}\label{eq:concave}
	\log^+(\sum_{i=1}^{N}x_i)\leq \sum_{i=1}^{N}\log^+x_i+ \log N,&\quad  \log^+\prod_{i=1}^{N}x_i\leq \sum_{i=1}^{N}\log^+x_i\quad\mbox{ for }x_i\geq 0. 
	\end{align}  \\[-8mm]\qed
\end{lem}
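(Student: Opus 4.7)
The plan is to prove each inequality by a short calculation using monotonicity of $\log$ and the fact that $\log^+$ dominates $\log$ pointwise.

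For the first inequality, I would start from the elementary bound $\sum_{i=1}^N x_i \leq N \cdot \max_i x_i$. Applying $\log^+$ (which is nondecreasing) and using the identity $\log^+(ab) \leq \log^+ a + \log^+ b$ for $a,b \geq 0$ (together with $\log^+ N = \log N$ since $N \geq 1$), I would obtain
\[
\log^+\Bigl(\sum_{i=1}^N x_i\Bigr) \leq \log N + \log^+(\max_i x_i).
\]
Since $\log^+(\max_i x_i) \leq \sum_{i=1}^N \log^+ x_i$ (each term on the right being nonnegative and one of them equaling the left side), the first inequality follows.

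For the second inequality, I would split into cases. If $\prod x_i \leq 1$ then $\log^+ \prod x_i = 0$ and the inequality is immediate. Otherwise $\prod x_i > 1$, in which case all factors are positive and
\[
\log^+ \prod_{i=1}^N x_i = \log \prod_{i=1}^N x_i = \sum_{i=1}^N \log x_i \leq \sum_{i=1}^N \log^+ x_i,
\]
where the last step uses $\log t \leq \log^+ t$ pointwise.

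There is no serious obstacle: both inequalities reduce to monotonicity of $\log$ and the triangle-style estimate $\log^+(ab)\leq \log^+ a+\log^+ b$. The only minor care needed is to handle the case where some $x_i$ vanishes in the product version (then $\log^+ \prod x_i = 0$ and the inequality is trivial).
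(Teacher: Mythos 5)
Your proof is correct. The paper states this lemma with an immediate \qed and gives no written argument, treating it as standard; your reduction of both inequalities to the subadditivity $\log^+(ab)\leq\log^+ a+\log^+ b$ and monotonicity of $\log$ is exactly the expected elementary argument, with the case analysis handling the degenerate situations cleanly.
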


The following lemma is well-known (see \emph{e.g.} \cite[Lemme 1.6]{Dem97b}).
\begin{lem}\label{lem:decrease}
	Let $ X$ be a projective manifold equipped with a hermitian metric $\omega$ and let $u:X\to \mathbb{P}^1$ be a rational function. Then for any holomorphic map $\gamma:\mathbb{D}^\circ\to X$, one has
	\begin{align}\label{eq:decrease}
	T_{(u\circ\gamma)^*\omega_{FS}}(r)\leq CT_{\gamma^*\omega}(r)+O(1)
	\end{align} 
	where $\omega_{FS}$ is the Fubini-Study metric for $\mathbb{P}^1$. \qed
\end{lem}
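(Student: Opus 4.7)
The lemma is a standard assertion in Nevanlinna theory: the characteristic function depends, up to $O(1)$, only on the cohomology class of the $(1,1)$-form being pulled back, combined with the observation that after resolving the indeterminacy of $u$ the pullback of $[\omega_{FS}]$ is dominated by a multiple of the K\"ahler class $[\omega]$. My plan is to first perform the resolution step and then carry out the two comparisons (pointwise upstairs, cohomological across the birational modification) in sequence.

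Resolve the indeterminacy of $u$ by a projective birational morphism $\pi:\tilde X\to X$ such that $\tilde u:=u\circ\pi:\tilde X\to \mathbb{P}^1$ is everywhere defined. Since $u\circ\gamma$ is well-defined, $\gamma(\mathbb{D}^\circ)$ is not contained in the indeterminacy locus of $u$, so $\gamma$ lifts uniquely to $\tilde\gamma:\mathbb{D}^\circ\to \tilde X$ with $\pi\circ\tilde\gamma=\gamma$; consequently $T_{(u\circ\gamma)^*\omega_{FS}}(r)=T_{\tilde\gamma^*\tilde u^*\omega_{FS}}(r)$ and $T_{\gamma^*\omega}(r)=T_{\tilde\gamma^*\pi^*\omega}(r)$. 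Fix an auxiliary K\"ahler form $\tilde\omega$ on $\tilde X$. Compactness of $\tilde X$ gives a pointwise inequality $\tilde u^*\omega_{FS}\leq C_1\tilde\omega$ for some $C_1>0$, so the task reduces to bounding $T_{\tilde\gamma^*\tilde\omega}(r)$ by $T_{\tilde\gamma^*\pi^*\omega}(r)$ up to $O(1)$. I would use that $\pi^*[\omega]$ is big on $\tilde X$, so for $C_2$ sufficiently large the class $C_2\pi^*[\omega]-[\tilde\omega]$ is pseudoeffective, furnishing a current decomposition
\[
\tilde\omega\leq C_2\pi^*\omega+dd^c\psi
\]
with $\psi$ quasi-plurisubharmonic on $\tilde X$ and bounded above. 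Applying the Green-Jensen formula \eqref{eq:Jensen} to $\psi\circ\tilde\gamma$ yields $T_{\tilde\gamma^*\tilde\omega}(r)\leq C_2 T_{\tilde\gamma^*\pi^*\omega}(r)+O(1)$, the boundedness-above of $\psi$ controlling the integral $\int_{\partial\mathbb{D}_r}(\psi\circ\tilde\gamma)\,d^c\log|z|$ uniformly in $r$, while the remaining Jensen terms depend only on $r_0$. Composing the two bounds gives \eqref{eq:decrease} with $C=C_1C_2$.

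The chief obstacle is the last comparison, where the potential $\psi$ is only quasi-psh rather than smooth: one must ensure that $\psi\circ\tilde\gamma$ is locally integrable on $\mathbb{D}^\circ$ (which is automatic since $\tilde\gamma$ is non-constant and $\{\psi=-\infty\}$ is pluripolar) and that the residual Jensen-term at the inner boundary $\partial\mathbb{D}_{r_0}$, which formally carries a $\log(r/r_0)$ factor, is absorbed into the $O(1)$. This can be handled by Demailly's regularization, producing a decreasing sequence of smooth quasi-psh approximations of $\psi$ that yields the desired estimate up to a fixed constant.
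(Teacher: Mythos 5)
The paper offers no proof of this lemma, citing only Demailly's Lemme 1.6 in \cite{Dem97b}, so I assess your argument on its own terms. Your route --- resolve the indeterminacy of $u$ by a birational $\pi:\tilde X\to X$, lift $\gamma$ to $\tilde\gamma$, use compactness to get $\tilde u^*\omega_{FS}\leq C_1\tilde\omega$, and then compare $T_{\tilde\gamma^*\tilde\omega}$ with $T_{\tilde\gamma^*\pi^*\omega}$ via bigness of $\pi^*[\omega]$ --- is a natural and essentially standard one. The gap is in the last comparison. The Green-Jensen formula \eqref{eq:Jensen} on the annulus contains the term $-\log(r/r_0)\int_{\partial\mathbb{D}_{r_0}}d^c\phi$; with $\phi=\psi\circ\tilde\gamma$, the constant $\int_{\partial\mathbb{D}_{r_0}}d^c\phi$ is in general nonzero, so this term is genuinely $O(\log r)$, not $O(1)$. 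Your assertion that it is ``absorbed into the $O(1)$'' is false, and Demailly's regularization does not help: smooth approximations $\psi_k\searrow\psi$ with $dd^c\psi_k\geq-\varepsilon_k\tilde\omega$ leave the flux $\int_{\partial\mathbb{D}_{r_0}}d^c(\psi_k\circ\tilde\gamma)$ nonzero in the limit. What your proof actually establishes is $T_{(u\circ\gamma)^*\omega_{FS}}(r)\leq CT_{\gamma^*\omega}(r)+O(\log r)$. That weaker bound is in fact all that the paper uses downstream (see how an $O(\log r)$ is carried along at \eqref{eq:final}), but it does not yield the $O(1)$ statement of the lemma as written.

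A secondary gap: you claim that $\psi\circ\tilde\gamma\not\equiv-\infty$ is ``automatic since $\tilde\gamma$ is non-constant and $\{\psi=-\infty\}$ is pluripolar.'' This is not automatic; a non-constant holomorphic curve can perfectly well lie inside a pluripolar set (for instance inside a divisor, which is the polar set of $\log|s|$). The correct justification is that $\pi^*[\omega]$ is big and nef with augmented base locus equal to the exceptional locus $\mathrm{Exc}(\pi)$, so one can choose the Kähler current, hence $\psi$, with poles supported only on $\mathrm{Exc}(\pi)$; and $\tilde\gamma$, being the strict transform of $\gamma$, is not contained in $\mathrm{Exc}(\pi)$ precisely because $u\circ\gamma$ is assumed well defined. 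Finally, a cosmetic point: $[\omega]$ is a cohomology class only if $\omega$ is closed; since the lemma allows a mere hermitian metric, you should first replace $\omega$ by a comparable Kähler metric using compactness of $X$, which only changes the constant $C$.
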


 The following logarithmic derivative lemma for the punctured disk, see e.g.~\cite{Nog81}[Lemma~2.12],   is crucial in our proof. 
\begin{lem}\label{lem:Noguchi}
	Let $u:\mathbb{D}^\circ\to \mathbb{P}^1$ be any meromorphic function. Then for any $k\geq 1$, we have
	\begin{align}\label{eq:Noguchi}
	\frac{1}{2\pi}\int_{0}^{2\pi}\log ^+|\frac{u^{(1)}(re^{i\theta})}{u(re^{i\theta})}|d\theta \leq C(\log^+ T_{u^*\omega_{FS}}(r)+\log r)   \quad \lVert, 
	\end{align}
	for some constant $C>0$ which does not depend on $r$. 
	Here   the symbol \(\lVert\) means that	the  inequality holds outside a Borel subset of \((r_0,+\infty )\) of finite Lebesgue measure.  \qed
\end{lem}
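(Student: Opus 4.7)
The plan is to adapt Nevanlinna's classical proof of the logarithmic derivative lemma on $\mathbb{C}$ to the setting of the punctured disk $\mathbb{D}^\circ = \{|z|\geq 1\}$, replacing the Poisson--Jensen formula on a disk by its analogue on the annulus $A_{r_0,r}=\{r_0\leq |z|\leq r\}$. For a meromorphic $u$ on $\mathbb{D}^\circ$ with zeros $(a_\mu)$ and poles $(b_\nu)$ in the interior of $A_{r_0,r}$, this formula expresses $\log u(z)$ as a sum of Poisson-type integrals along the two boundary circles $|w|=r_0$ and $|w|=r$, plus Blaschke/Green-function contributions from the divisors of $u$. Differentiating in $z$ yields an integral representation for $u'(z)/u(z)$ on $|z|=r$ whose $\log^+$-norm can be controlled by the Nevanlinna characteristic.

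Concretely, I would proceed as follows. First, I would show that $T_{u^*\omega_{FS}}(r)$ coincides, up to an $O(1)$ error, with the classical Nevanlinna characteristic $T(r,u):=m(r,u)+N(r,u)$ defined via boundary integrals on $|z|=r$ and counting functions for the zeros and poles of $u$ in $A_{r_0,r}$, using Jensen's formula on the annulus and the First Main Theorem. Second, I would differentiate the Poisson--Jensen representation in $z$ to obtain, for $z=re^{i\theta}$, a pointwise bound of the form
\begin{equation*}
\left|\frac{u'(z)}{u(z)}\right| \lesssim \frac{1}{r}\int_{|w|=r}\frac{\bigl|\log|u(w)|\bigr|}{|w-z|^2}\,|dw| \;+\; \sum_\mu \frac{1}{|z-a_\mu|} \;+\; \sum_\nu \frac{1}{|z-b_\nu|} \;+\; (\text{inner-boundary term at }|w|=r_0).
\end{equation*}
Third, I would take $\log^+$, integrate against $d\theta/(2\pi)$ on $|z|=r$, and apply the concavity of $\log$ (Jensen's inequality) to exchange $\log^+$ with the integrals, reducing everything to $\log^+T(r,u)$, the counting functions $N(r,u)+N(r,1/u)\leq 2T(r,u)+O(1)$ obtained from the First Main Theorem, and the fixed inner-boundary contribution. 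Finally, the standard Borel calculus lemma absorbs any remaining derivative of $T$ into $T$ itself, outside a Borel exceptional set of finite Lebesgue measure in $(r_0,\infty)$, yielding the stated bound.

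The main obstacle I expect is the inner-boundary contribution at $|z|=r_0$, which is absent in the classical entire case and which produces the extra $\log r$ factor in the final estimate. Its size is controlled by the logarithmic growth of the Green's function of the annulus $A_{r_0,r}$, of order $\log(r/r_0)$ as $r\to\infty$. Keeping this term at precisely $O(\log r)$, rather than worse, requires fixing $r_0$ once and for all at the outset and using the fact that, with $r_0$ so fixed, $\log|u|$ is integrable on the circle $|z|=r_0$ independently of $r$. The statement for a general $k$-th derivative $u^{(k)}$ (as the \enquote*{for any $k\geq 1$} in the statement suggests) then follows by induction on $k$: one applies the $k=1$ case to $u^{(k-1)}$, together with the standard estimate $T(r,u^{(j)})\leq T(r,u)+O(\log^+T(r,u)+\log r)$ outside an exceptional set, which itself is a direct consequence of the $k=1$ case.
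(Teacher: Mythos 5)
The paper does not prove this lemma: it is stated as a result cited from Noguchi~\cite{Nog81}, Lemma~2.12, and the terminating box simply closes the quotation, so there is no in-paper argument against which to compare your proposal. (Note also that the \enquote*{for any $k\geq 1$} in the statement is vestigial, since only $u^{(1)}/u$ appears.)

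Your outline is a correct and essentially standard reconstruction of a Nevanlinna-type proof on the punctured disk, in the spirit of the cited reference: identify $T_{u^*\omega_{FS}}$ with the classical characteristic $m(r,u)+N(r,u)$ on the annulus via Jensen's formula and the first main theorem, differentiate a Poisson--Jensen representation to bound $u'/u$ pointwise on $|z|=r$, take $\log^+$ and integrate using concavity, and absorb derivatives of $T$ with Borel's calculus lemma. Two caveats on the details. First, the $\log r$ term is not specifically \enquote*{extra} compared to the entire case: it already appears in the classical logarithmic derivative lemma on $\mathbb{C}$ through the Borel argument and the $\log\bigl(1/(\rho-r)\bigr)$ and $\log\rho$ factors from the outer boundary, whereas with $r_0$ fixed and $\log|u|$ integrable on $|z|=r_0$, the differentiated inner-boundary kernel is bounded for $r\gg r_0$ and contributes only a bounded error. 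Second, the Green's function and harmonic measure of an annulus are given by an infinite sum over reflections in the two boundary circles, so the \enquote*{Blaschke/Green-function} terms you invoke require a geometrically convergent series of estimates rather than the single reflected point of the disk case; this works but is more delicate than your sketch suggests. A cleaner route that sidesteps the annulus Green's function entirely is to pull $u$ back to the right half-plane via $z=e^{w}$ and apply the classical argument there with the explicit half-plane Poisson kernel.
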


We also need an elementary lemma (due to E. Borel), called the calculus lemma.
\begin{lem}\label{lem:Borel}
	Let $\phi(r)\geq 0 (r\geq r_0\geq 0)$ be a monotone increasing
	function. For every $\delta>0$, 
	\begin{align}\label{eq:Borel}
	\frac{d}{d r}\phi(r)\leq \phi(r)^{1+\delta}\quad \lVert.
	\end{align}  \\[-8mm]\qed
\end{lem}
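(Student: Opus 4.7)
The plan is to prove the classical Borel calculus lemma by an elementary measure-theoretic argument: identify the exceptional set $E_\delta \subset [r_0,+\infty)$ where the inequality fails, then show that integrating $\phi'/\phi^{1+\delta}$ over $E_\delta$ is forced to converge, bounding $|E_\delta|$ from above.

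First I would reduce to the case $\phi(r_0) > 0$. If $\phi$ vanishes identically then there is nothing to prove, since $\phi'=0$ almost everywhere by monotonicity and the inequality holds trivially; otherwise I translate $r_0$ to a point where $\phi$ is strictly positive and use that $\phi$ remains positive thereafter. Next I invoke the standard fact that a monotone increasing function is differentiable almost everywhere, with $\int_{r_0}^{r}\phi'(s)\,ds \leq \phi(r)-\phi(r_0)$, so the derivative on the left-hand side of \eqref{eq:Borel} makes sense a.e.

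The main step is the estimate. Set
\[
E_\delta := \{\, r \geq r_0 : \phi'(r) > \phi(r)^{1+\delta}\,\}.
\]
On $E_\delta$ the ratio $\phi'(r)/\phi(r)^{1+\delta}$ exceeds $1$, hence
\[
|E_\delta| \;\leq\; \int_{E_\delta}\frac{\phi'(s)}{\phi(s)^{1+\delta}}\,ds \;\leq\; \int_{r_0}^{\infty}\frac{\phi'(s)}{\phi(s)^{1+\delta}}\,ds.
\]
Applying the change of variables $u=\phi(s)$ (valid for monotone $\phi$ via the Lebesgue--Stieltjes formulation, or by first smoothing $\phi$ and passing to the limit) yields
\[
\int_{r_0}^{\infty}\frac{\phi'(s)}{\phi(s)^{1+\delta}}\,ds \;\leq\; \int_{\phi(r_0)}^{\infty}\frac{du}{u^{1+\delta}} \;=\; \frac{1}{\delta\,\phi(r_0)^{\delta}} \;<\; \infty.
\]
Hence $E_\delta$ has finite Lebesgue measure, which is exactly the statement \eqref{eq:Borel}.

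The only mild obstacle is the justification of the substitution and of the inequality $\int \phi' \leq \phi(r)-\phi(r_0)$ under the mere hypothesis of monotonicity, since $\phi$ need not be $C^1$. This is addressed by standard real-variable theory: monotone functions are of bounded variation on compact intervals, hence differentiable a.e. and the absolutely continuous part of $d\phi$ satisfies the substitution rule. If one prefers to avoid this, one can assume $\phi$ is $C^1$ (which suffices for the applications in Sections~\ref{sect_BP} and \ref{sect_BP2}, where $\phi$ is a Nevanlinna-type characteristic function that can be shown to be $C^1$ in $r$ after smoothing), and the argument above becomes an elementary calculus computation.
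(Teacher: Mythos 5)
The paper itself gives no proof of this lemma: it is stated with a terminating $\qed$ as the classical calculus lemma of \'E.~Borel and treated as known. Your proof is the standard one and is correct. The reduction to $\phi(r_0)>0$ (handling the trivial case $\phi\equiv 0$ and translating $r_0$ past any initial zero interval, which only discards a bounded set), the observation that $\phi'/\phi^{1+\delta}>1$ on the exceptional set $E_\delta$, and the change of variables bounding
\[
\int_{r_0}^{\infty}\frac{\phi'(s)}{\phi(s)^{1+\delta}}\,ds\ \le\ \frac{1}{\delta\,\phi(r_0)^{\delta}}
\]
together yield $|E_\delta|<\infty$. The real-variable points you flag are handled exactly as you indicate: a monotone $\phi$ is differentiable a.e., $\int_a^b\phi'\le\phi(b)-\phi(a)$, and the substitution is most cleanly justified by noting that $s\mapsto -\tfrac{1}{\delta}\phi(s)^{-\delta}$ is itself monotone increasing with a.e.\ derivative $\phi'/\phi^{1+\delta}$, so that the same inequality for monotone functions applies; the $C^1$ fallback is also legitimate for the characteristic functions arising in Sections~\ref{sect_BP} and \ref{sect_BP2}.
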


\subsection{Criterion for big Picard   theorem} 
 Now we are ready to give a new proof of Theorem~\ref{thm:Big Picard}.
\begin{proof} Via the isomorphism $\mathbb{D}^\circ\stackrel{\sim}{\rightarrow} \mathbb{D}^*$ by setting $z\mapsto \frac{1}{z}$ as before, we  assume that $\gamma:\mathbb{D}^*\to X-D$ is a holomorphic map from $\mathbb{D}^\circ$ to $X-D$.
	We take a finite affine covering $\{U_\alpha\}_{\alpha\in I}$ of $X$ and rational  functions
	$(x_{\alpha1},\ldots,x_{\alpha n})$ on $X$ which are holomorphic on $U_\alpha$ so that
	\begin{align*}
	dx_{\alpha1}\wedge\cdots\wedge dx_{\alpha n}\neq 0 \ \mbox{ on } U_\alpha\\
	D\cap U_\alpha=(x_{\alpha,s(\alpha)+1} \cdots  x_{\alpha n}=0)
	\end{align*}
	Hence 
	\begin{align}\label{eq:basis}
	(e_{\alpha 1},\ldots,e_{\alpha n}):=	(\frac{\partial }{\partial x_{\alpha 1}},\ldots,\frac{\partial }{\partial x_{\alpha s(\alpha)}},x_{\alpha, s(\alpha)+1}\frac{\partial }{\partial x_{\alpha, s(\alpha)+1}},\ldots,x_{\alpha n}\frac{\partial}{\partial x_{\alpha n}})
	\end{align} 
	is a basis for $T_X(-\log D)|_{U_\alpha}$. Write $$ (\gamma_{\alpha1}(z),\ldots,\gamma_{\alpha n}(z)):=(x_{\alpha 1}\circ \gamma,\ldots,x_{\alpha n}\circ \gamma) $$  
	so that $\gamma_{\alpha j}: \mathbb{D}^\circ\to \mathbb{P}^1$ is a meromorphic function  over $\mathbb{D}^\circ$ for any $\alpha$ and $j$.  With respect to the trivialization of $T_X(-\log D)$ induced by the basis \eqref{eq:basis},   $\gamma'(z)$ can be written as
	$$\gamma'(z)= \gamma_{\alpha1}'(z)e_{\alpha 1}+\cdots+\gamma_{\alpha s(\alpha)}'(z)e_{\alpha s(\alpha)}+(\log \gamma_{\alpha,s(\alpha)+1})'(z)e_{\alpha, s(\alpha)+1}+\cdots+(\log \gamma_{\alpha  n})'(z)e_{\alpha n} $$
	over $U_\alpha$. Let $\{\rho_\alpha \}_{\alpha\in I}$ be a partition of unity subordinated to $\{U_\alpha\}_{\alpha\in I}$. 
	
	 Since $h$ is Finsler pseudometric for $T_X(-\log D)$ which is continuous and locally bounded from above by Definition \ref{def:Finsler}, and $ I$ is a finite set,    there is a constant $C>0$ so that
		\begin{align}\label{eq:unity}
		\rho_\alpha\circ \gamma \cdot|\gamma'(z)|_h^2\leq C \Big( \sum_{j=1}^{s(\alpha)}\rho_\alpha\circ \gamma \cdot|\gamma_{\alpha j}'(z)|^2+\sum_{i=s(\alpha)+1}^{n}|(\log \gamma_{\alpha i})'(z)|^2 \Big) \quad \forall z\in \mathbb{D}^*
		\end{align}
		for any $\alpha$. Hence
		\begin{align}\nonumber
		T_{\gamma^*\omega}(r)&:=\int_{r_0}^{r}\frac{d\rho}{\rho}\int_{\mathbb{D}_{r_0,\rho}}\gamma^*\omega\stackrel{\eqref{eq:condition}}{\leq} \int_{r_0}^{r}\frac{d\rho}{\rho}\int_{\mathbb{D}_{r_0,\rho}}	 \hess \log |\gamma'|_h^2\\\nonumber
		&\stackrel{\eqref{eq:Jensen}}{\leq}  \frac{1}{2\pi}\int_{0}^{2\pi} \log|\gamma'(re^{i\theta})|_h d\theta+O(\log r)\\\nonumber
		&\leq  \frac{1}{2\pi}\int_{0}^{2\pi} \log^+\sum_{\alpha}|\rho_\alpha\circ \gamma\cdot \gamma'(re^{i\theta})|_h d\theta+O(\log r)\\\nonumber
		&\stackrel{\eqref{eq:concave}}{\leq} \sum_{\alpha}\frac{1}{2\pi}\int_{0}^{2\pi} \log^+|\rho_\alpha\circ \gamma\cdot \gamma'(re^{i\theta})|_h d\theta+O(\log r)\\\nonumber
		&\stackrel{\eqref{eq:unity}+\eqref{eq:concave}}{\leq} \sum_{\alpha}\sum_{i=s(\alpha)+1}^n \frac{1}{2\pi}\int_{0}^{2\pi} \log^+|(\log \gamma_{\alpha i})'(re^{i\theta})| d\theta\\\nonumber
		&\quad +\sum_{\alpha}\sum_{j=1}^{s(\alpha)}\frac{1}{2\pi}\int_{0}^{2\pi}\log ^+|\rho_\alpha\circ \gamma\cdot \gamma_{\alpha j}'(re^{i \theta})|d\theta +O(\log r)\\\nonumber
		&\stackrel{\eqref{eq:Noguchi}}{\leq} C_1\sum_{\alpha}\sum_{i=s(\alpha)+1}^{n}\big(\log^+  T_{\gamma_{\alpha i},\omega_{FS}}(r) +\log r\big)\\\nonumber
		&\quad +\sum_{\alpha}\sum_{j=1}^{s(\alpha)}\frac{1}{2\pi}\int_{0}^{2\pi}\log ^+|\rho_\alpha\circ \gamma\cdot \gamma_{\alpha j}'(re^{i \theta})|d\theta+O(\log r)\quad \lVert \\\label{eq:final}
		&\stackrel{\eqref{eq:decrease}}{\leq} C_2(\log^+T_{\gamma^*\omega}(r)+ \log r)+\sum_{\alpha}\sum_{j=1}^{s(\alpha)}\frac{1}{2\pi}\int_{0}^{2\pi}\log ^+|\rho_\alpha\circ \gamma\cdot \gamma_{\alpha j}'(re^{i \theta})|d\theta\quad \lVert.
		\end{align}
		Here $C_1$ and $C_2$ are two positive constants which do not depend on $r$.
		\begin{claim}\label{claim}
			For any $\alpha\in I$, any $j\in \{1,\ldots,s(\alpha)\}$, one has
			\begin{align}\label{eq:new}
			\frac{1}{2\pi}\int_{0}^{2\pi}\log ^+|\rho_\alpha\circ \gamma\cdot \gamma_{\alpha j}'(re^{i \theta})|d\theta\leq C_3(\log^+T_{\gamma^*\omega}(r)+ \log r)+O(1)\quad \lVert
			\end{align} 
			for a	positive constant $C_3$ which does not depend on $r$.
		\end{claim}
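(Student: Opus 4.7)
My plan is to reduce the expression $\rho_\alpha\circ\gamma\cdot\gamma_{\alpha j}'$---which features the \emph{non-holomorphic} cutoff $\rho_\alpha$---to the pure logarithmic derivative $\gamma_{\alpha j}'/\gamma_{\alpha j}$ of the meromorphic function $\gamma_{\alpha j}\colon\mathbb{D}^\circ\to\mathbb{P}^1$, so that Lemma~\ref{lem:Noguchi} and Lemma~\ref{lem:decrease} can be applied in sequence.

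The key observation enabling this reduction is that, since $j\in\{1,\ldots,s(\alpha)\}$, the rational function $x_{\alpha j}$ is holomorphic on the open neighborhood $U_\alpha$ of $\mathrm{supp}(\rho_\alpha)$, so $\rho_\alpha\cdot x_{\alpha j}$ extends by zero to a bounded smooth function on the compact manifold $X$. Consequently there is a constant $M>0$ with $|\rho_\alpha\circ\gamma(z)\cdot\gamma_{\alpha j}(z)|\le M$ on $\mathbb{D}^\circ$. Writing
\[
\rho_\alpha\circ\gamma\cdot\gamma_{\alpha j}'=\bigl(\rho_\alpha\circ\gamma\cdot\gamma_{\alpha j}\bigr)\cdot\frac{\gamma_{\alpha j}'}{\gamma_{\alpha j}}
\]
wherever $\gamma_{\alpha j}\neq 0$ and invoking the subadditivity of $\log^+$ from \eqref{eq:concave} then bounds $\log^+|\rho_\alpha\circ\gamma\cdot\gamma_{\alpha j}'|$ pointwise by $\log^+|\gamma_{\alpha j}'/\gamma_{\alpha j}|+\log^+M$, the inequality persisting trivially at the isolated zeros of $\gamma_{\alpha j}$. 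Integrating over the circle and applying Lemma~\ref{lem:Noguchi} to $\gamma_{\alpha j}$ will yield the desired bound in terms of $\log^+T_{\gamma_{\alpha j}^*\omega_{FS}}(r)+\log r$, and a final application of Lemma~\ref{lem:decrease} to the rational function $x_{\alpha j}\colon X\dashrightarrow\mathbb{P}^1$ replaces $T_{\gamma_{\alpha j}^*\omega_{FS}}(r)$ by $T_{\gamma^*\omega}(r)$ up to a multiplicative constant and an $O(1)$ term, producing \eqref{eq:new}.

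The only genuinely non-formal step is the boundedness of $\rho_\alpha\cdot x_{\alpha j}$ on $X$, and here the hypothesis $j\le s(\alpha)$ is essential: for $j>s(\alpha)$ the function $x_{\alpha j}$ is only logarithmic along $D$ and $\rho_\alpha\cdot x_{\alpha j}$ fails to be bounded, which is precisely why those indices were handled separately in \eqref{eq:final} by applying \eqref{eq:Noguchi} directly to $\gamma_{\alpha i}$. The degenerate case $\gamma_{\alpha j}\equiv\mathrm{const}$, in which the left-hand side of \eqref{eq:new} vanishes identically, will be discarded at the outset.
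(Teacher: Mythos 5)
Your proof is correct, but it takes a genuinely different route from the paper's. The paper bounds the integrand by observing that $\rho_\alpha^2\,\sqrt{-1}\,dx_{\alpha j}\wedge d\bar x_{\alpha j}$ (extended by zero outside $U_\alpha$) is a smooth semipositive $(1,1)$-form on the compact $X$, hence dominated by $C\omega$; pulling back by $\gamma$ gives $|\rho_\alpha\circ\gamma\cdot\gamma_{\alpha j}'|^2\leq C B$ where $\gamma^*\omega=\sqrt{-1}\,B\,dt\wedge d\bar t$, and then the concavity of $\log$ together with two applications of the calculus lemma (Lemma~\ref{lem:Borel}, following \cite[eq.\,(4.7.2)]{NW}) yield \eqref{eq:new} directly. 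You instead factor $\rho_\alpha\circ\gamma\cdot\gamma_{\alpha j}'=(\rho_\alpha\circ\gamma\cdot\gamma_{\alpha j})\cdot\gamma_{\alpha j}'/\gamma_{\alpha j}$, use the compactness of $\mathrm{supp}(\rho_\alpha)\subset U_\alpha$ to bound the first factor, and then apply the logarithmic derivative lemma (Lemma~\ref{lem:Noguchi}) followed by Lemma~\ref{lem:decrease}. Both are valid. Your version is conceptually appealing in that it handles the $j\leq s(\alpha)$ indices by the \emph{same} mechanism (Noguchi's lemma) already used for the $j>s(\alpha)$ indices in \eqref{eq:final}, whereas the paper's version is self-contained at this step (no additional appeal to the log derivative lemma) and avoids any case discussion about zeros of $\gamma_{\alpha j}$.

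One small inaccuracy in your side remark: you claim that for $j>s(\alpha)$ the product $\rho_\alpha\cdot x_{\alpha j}$ ``fails to be bounded.'' In fact $x_{\alpha j}$ is holomorphic on all of $U_\alpha$ for every $j$, so $\rho_\alpha\cdot x_{\alpha j}$ is bounded for $j>s(\alpha)$ too. The true reason those indices are handled separately in \eqref{eq:final} is that the coefficient of $e_{\alpha j}$ in the expansion of $\gamma'(z)$ relative to the basis \eqref{eq:basis} is $(\log\gamma_{\alpha j})'=\gamma_{\alpha j}'/\gamma_{\alpha j}$ rather than $\gamma_{\alpha j}'$, and that quantity is already in the exact form demanded by Lemma~\ref{lem:Noguchi}, so no auxiliary claim is needed. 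This misstatement does not affect the validity of your main argument.
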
 
		\begin{proof}[Proof of Claim \ref{claim}] \
			The proof of the claim is borrowed from \cite[eq.(4.7.2)]{NW}. Pick $C>0$ so that $\rho^2_\alpha\sn dx_{\alpha j}\wedge d\bar{x}_{\alpha j}\leq C\omega$. Write $\gamma^*\omega:=\sn B(t)dt\wedge d\bar{t}$. Then
			\begin{align*}
			&\frac{1}{2\pi}\int_{0}^{2\pi}\log ^+|\rho_\alpha\circ \gamma\cdot \gamma_{\alpha j}'(re^{i \theta})|d\theta = \frac{1}{4\pi}\int_{0}^{2\pi}\log ^+(|\rho^2_\alpha\circ \gamma |\cdot|\gamma_{\alpha j}'(re^{i \theta})|^2)d\theta\\
			&\leq \frac{1}{4\pi}\int_{0}^{2\pi}\log ^+B(re^{i\theta})d\theta+O(1) \leq \frac{1}{4\pi}\int_{0}^{2\pi}\log (1+B(re^{i\theta}))d\theta+O(1)\\
			&\leq \frac{1}{2}\log (1+\frac{1}{2\pi}\int_{0}^{2\pi}   B(re^{i\theta})d\theta)+O(1) 
			= \frac{1}{2}\log (1+\frac{1}{2\pi r}\frac{d}{d r}\int_{\mathbb{D}_{r_0,r}}   rBdrd\theta)+O(1)\\
			&= \frac{1}{2}\log (1+\frac{1}{2\pi r}\frac{d}{d r} \int_{\mathbb{D}_{r_0,r}}\gamma^*\omega)+O(1)\\
			&\stackrel{\eqref{eq:Borel}}{\leq}  \frac{1}{2}\log (1+\frac{1}{2\pi r} (\int_{\mathbb{D}_{r_0,r}}\gamma^*\omega)^{1+\delta})+O(1)\quad \lVert\\
			&=  \frac{1}{2}\log (1+\frac{r^\delta}{2\pi } (\frac{d}{d r} T_{\gamma^*\omega}(r))^{1+\delta})+O(1)\quad \lVert\\ 
			&\stackrel{\eqref{eq:Borel}}{\leq}   \frac{1}{2}\log (1+\frac{r^\delta}{2\pi } ( T_{\gamma^*\omega}(r))^{(1+\delta)^2})+O(1)\quad \lVert\\
			&\leq 4\log^+T_{\gamma^*\omega}(r)+\delta\log r+O(1)\quad \lVert.
			\end{align*}
			Here we pick $0<\delta<1$ and the last inequality follows.
			The claim is proved.
		\end{proof}
		Putting \eqref{eq:new}   to \eqref{eq:final},  one obtains
		$$
		T_{\gamma^*\omega}(r)\leq C(\log^+T_{\gamma^*\omega}(r)+ \log r)+O(1)\quad \lVert 
		$$
		for some positive constant $C$. 
		Hence $T_{\gamma^*\omega}(r)=O(\log r)$.  We  apply Lemma \ref{lem:criteria} to conclude that $\gamma$ extends to the $\infty$.  
\end{proof} 

%

 \section{Algebraic hyperbolicity for moduli spaces of polarized manifolds}\label{alg_hyp}
 In this section we will prove {the} algebraic hyperbolicity of {the} moduli {spaces studied above.} The question was {posed} to the first named author by Erwan Rousseau in February 2019 at {the} CIRM.
\begin{proof}[Proof of Theorem \ref{thm:algebraic hyperbolicity}]
{ Since the moduli map of the polarized family $(f:V\to U,\mathcal{L})\in \mathcal{M}_h(U)$ is quasi-finite, the family $f$ is of maximal variation and we let  $U^\circ\subset U$ be the Zariski open set given {in} Theorem \ref{deng}.	We first take a projective compactification $Z_0$ of the base $U$ and let $D=Z_0 - U$.  
Let $Z_1,\ldots,Z_m$ be the irreducible components of $Z_0-U^\circ$ whose intersection{s with $U$ are} nonempty. Let $\mu_i:X_i\to Z_i$ be a desingularization so that $D_i:=\mu_i^{-1}(D)$ is of simple normal crossing. For each 
$U_i:=X_i-D_i$, as the moduli map of the new family $(f_i:V\times_{U}U_i\to U_i, \mathcal{L}|_{V\times_{U}U_i})$ is generically finite, $f_i$ is also of maximal variation and so yields a Zariski open subset $U_i^\circ\subset U_i$ by Theorem \ref{deng} that allows us to repeat for $i\ge 1$ the construction.  Iterating using Theorem \ref{deng} and applying Theorem \ref{curv-ineq-for-criteria},  we construct a set of  log pairs $\{ (X_j,D_j)\}_{j=0,\ldots,N} $ with the following properties. }
	\begin{enumerate}
	\item There are   morphisms $\mu_i:X_i\to Y$ with $\mu_i^{-1}(D)=D_i$, so that each $\mu_i:X_i\to \mu_i(X_i)$ is a birational morphism.
	\item There are smooth Finsler pseudometrics $h_{i1},\ldots,h_{in}$ for $T_{X_i}(-\log D_i)$.
	\item \label{iso}$\mu_i|_{U^\circ_i}:U^\circ_i\to \mu_i(U^\circ_i)$ is an isomorphism.
	\item \label{curvature} There are   smooth K\"ahler metrics $\omega_{i1},\ldots,\omega_{in}$ on $X_i$ such that for any curve $\gamma:C\to U_i$ with $C$ an open set of $\mathbb{C}$ and $\gamma(C)\cap U^\circ_i\neq 0$, there exists some $h_{ij}$ so that $|\gamma'(t)|^2_{h_{ij}}\not\equiv 0$, and 
	\begin{align}\label{eq:uniform}
	\hess \log |\gamma'|_{h_{ij}}^2\geq \gamma^*\omega_{ij}.   
	\end{align}
	\item   For any $i\in \{0,\ldots,N\}$, either $		\mu_i( U_i)-\mu_i(U_i^\circ)$ is zero dimensional, or  there exists $I\subset  \{0,\ldots,N\}$ so that 
	$$
	\mu_i( U_i)-\mu_i(U_i^\circ)\subset \cup_{j\in I}\mu_j(X_j)
	$$
\end{enumerate} 	 

For any irreducible and reduced curve $C\subset Y$ with $C\not\subset D$. By the above construction, there is some log $(X_i,D_i)$  so that $C\subset \mu_i(X_i)$ and $C\cap \mu_i(U_i^\circ)\neq \varnothing$. By Item \ref{iso}, $C$ is not contained in the exceptional locus of $\mu_i$, and let $C_i\subset X_i$ be the strict transform of $C$ under $\mu_i$.
	Denote by $\nu_i:\tilde{C}_i\to C_i\subset X_i$
	the normalization of $C_i$, and set $P_i:=(\mu_i\circ\nu_i)^{-1}(D)=\mu_i^{-1}(D_i)$. Then  one has
	$$
	d \nu_i:T_{\tilde{C}_i}(-\log P_i)\to \nu_i^*T_{X_i}(-\log D_i).
	$$
	By Item \ref{curvature}, there is a Finsler pseudometric $h_{ij}$ for $T_{X_i}(-\log D_i)$ so that \eqref{eq:uniform} holds.  Consider
 $\tilde{h}_i:=\nu_i^*h_{ij}$, which is a complex semi-norm over $T_{\tilde{C}_i}(-\log P_i)$. By \eqref{eq:uniform}, there is a K\"ahler metric $\omega_{ij}$ on $X_i$ so that the  curvature current satisfies
	$$
	\frac{\sn}{2\pi}\Theta_{\tilde{h}_i^{-1}}(K_{\tilde{C}_i}(\log P_i))\geq \nu_i^*\omega_{ij}
	$$
Since $\mu_i\circ\nu_i:\tilde{C}_i\to C$ is the normalization of $C$, one has
	$$
	2g(\tilde{C}_i)-2+i(C,D)=\int_{\tilde{C}_i}  \frac{\sn}{2\pi}\Theta_{\tilde{h}_i^{-1}}(K_{\tilde{C}_i}(\log P_i))\geq \int_{\tilde{C}_i} \nu_i^*\omega_{ij}
	$$
	Fix a K\"ahler metric $\omega_Y$ on $Y$. Then there is a constant $\ep>0$ so that $\omega_{ij}\geq\ep\mu_i^*\omega_Y$ for any $i=0,\ldots,N$ and $j=1,\ldots,n$. We thus have
	$$
	2g(\tilde{C}_i)-2+i(C,D)  \geq \ep\int_{\tilde{C}_i} \mu_i^*\omega_Y=\ep\deg_{\omega_Y}C
	$$
 This shows the algebraic hyperbolicity of the base $U$.
\end{proof}
\begin{rmk}\label{rem:Arakelov}
	The algebraic hyperbolicity in Theorem \ref{thm:algebraic hyperbolicity} generalizes the Arakelov-type inequalities in \cite{MVZ-06} by M\"oller, Viehweg and the fourth named author, as well as the  weak boundedness of moduli stacks of canonically polarized manifolds in \cite{KL} by Kov\'acs-Lieblich. In \cite[Theorem 0.3]{MVZ-06}, the authors obtained   Arakelov-type inequalities  with \emph{sharp bounds} for semistable families of projective manifolds with semi-ample canonical sheaf over $\mathbb{P}^1$.  In \cite[Definition 2.4]{KL}, the authors introduced the notion of \emph{weak boundedness} for quasi-projective varieties (which is weaker than the notion of algebraic hyperbolicity) and they proved that the moduli stacks of canonically polarized manifolds are weakly bounded.
	\end{rmk}


\begin{thebibliography}{GGLR19} 
	\providecommand{\url}[1]{\texttt{#1}}
	\providecommand{\urlprefix}{URL }
	
	\bibitem[BB64]{Baily-Borel}
	W.~L. Baily, Jr. and A.~Borel.
	\newblock \enquote{On the compactification of arithmetically defined quotients
		of bounded symmetric domains.}
	\newblock \emph{Bull. Amer. Math. Soc.} (1964) vol.~70: 588--593.
	\newblock \urlprefix\url{http://dx.doi.org/10.1090/S0002-9904-1964-11207-6}.
	
	\bibitem[BBT18]{BBT-18}
	Benjamin Bakker, Yohan Brunebarbe, and Jacob Tsimerman.
	\newblock \enquote{O-minimal GAGA and a conjecture of Griffiths.} (2018).
	\newblock ArXiv:1811.12230v2.
	
	\bibitem[Bor97]{B1887}
	Emile Borel.
	\newblock \enquote{Sur les z\'{e}ros des fonctions enti\`eres.}
	\newblock \emph{Acta Math.} (1897) vol.~20~(1): 357--396.
	\newblock \urlprefix\url{http://dx.doi.org/10.1007/BF02418037}.
	
	\bibitem[Bor72]{Borel-72}
	Armand Borel.
	\newblock \enquote{Some metric properties of arithmetic quotients of symmetric
		spaces and an extension theorem.}
	\newblock \emph{J. Differential Geometry} (1972) vol.~6: 543--560.
	\newblock \urlprefix\url{http://projecteuclid.org/euclid.jdg/1214430642}.
	\newblock Collection of articles dedicated to S. S. Chern and D. C. Spencer on
	their sixtieth birthdays.
	
        \bibitem[Cam11]{Cam11}
Fr\'{e}d\'{e}ric Campana.
        \newblock Orbifoldes g\'{e}om\'{e}triques sp\'{e}ciales et classification
  bim\'{e}romorphe des vari\'{e}t\'{e}s k\"{a}hl\'{e}riennes compactes.
        \newblock {\em J. Inst. Math. Jussieu}, 10(4):809--934, 2011.
        \newblock \urlprefix\url{https://doi.org/10.1017/S1474748010000101}.

	\bibitem[CDK95]{CDK-95}
	Eduardo Cattani, Pierre Deligne, and Aroldo Kaplan.
	\newblock \enquote{On the locus of {H}odge classes.}
	\newblock \emph{J. Amer. Math. Soc.} (1995) vol.~8~(2): 483--506.
	\newblock \urlprefix\url{http://dx.doi.org/10.2307/2152824}.
	
	\bibitem[Che04]{Che04}
	Xi~Chen.
	\newblock \enquote{On algebraic hyperbolicity of log varieties.}
	\newblock \emph{Commun. Contemp. Math.} (2004) vol.~6~(4): 513--559.
	\newblock \urlprefix\url{https://doi.org/10.1142/S0219199704001422}.
	
	\bibitem[CKS86]{CKS-86}
	Eduardo Cattani, Aroldo Kaplan, and Wilfried Schmid.
	\newblock \enquote{Degeneration of {H}odge structures.}
	\newblock \emph{Ann. of Math. (2)} (1986) vol. 123~(3): 457--535.
	\newblock \urlprefix\url{http://dx.doi.org/10.2307/1971333}.
	
	\bibitem[Del87]{Del-84}
	P.~Deligne.
	\newblock \enquote{Un th\'{e}or\`eme de finitude pour la monodromie.}
	\newblock In \enquote{Discrete groups in geometry and analysis ({N}ew {H}aven,
		{C}onn., 1984),} vol.~67 of \emph{Progr. Math.}, 1--19,  (Birkh\"{a}user
	Boston, Boston, MA1987).
	\newblock \urlprefix\url{http://dx.doi.org/10.1007/978-1-4899-6664-3_1}.
	
	\bibitem[Dem97a]{Dem97}
	Jean-Pierre Demailly.
	\newblock \enquote{Algebraic criteria for {K}obayashi hyperbolic projective
		varieties and jet differentials.}
	\newblock In \enquote{Algebraic geometry---{S}anta {C}ruz 1995,} vol.~62 of
	\emph{Proc. Sympos. Pure Math.}, 285--360,  (Amer. Math. Soc., Providence,
	RI1997).
	\newblock \urlprefix\url{http://dx.doi.org/10.1090/pspum/062.2/1492539}.
	
	\bibitem[Dem97b]{Dem97b}
	---{}---{}---.
	\newblock \enquote{Vari\'et\'es hyperboliques et \'equations diff\'erentielles
		alg\'ebriques.}
	\newblock \emph{Gaz. Math.} (1997) ~(73): 3--23.
	
	\bibitem[Den18a]{Deng-18b}
	Ya~Deng.
	\newblock \enquote{{Kobayashi hyperbolicity of moduli spaces of minimal
			projective manifolds of general type (with the appendix by Dan Abramovich)}.}
	\newblock \emph{arXiv e-prints} (2018) arXiv:1806.01666.
	
	\bibitem[Den18b]{Deng-18}
	---{}---{}---.
	\newblock \enquote{Pseudo kobayashi hyperbolicity of base spaces of families of
		minimal projective manifolds with maximal variation.} (2018).
	\newblock ArXiv:1809.05891.
	
	\bibitem[Den19a]{Den19b}
	---{}---{}---.
	\newblock \enquote{{Big Picard theorem for moduli spaces of polarized
			manifolds}.}
	\newblock \emph{arXiv e-prints} (2019) arXiv:1912.11442.
	
	\bibitem[Den19b]{Den19}
	---{}---{}---.
	\newblock \enquote{{Hyperbolicity of coarse moduli spaces and isotriviality for
			certain families}.}
	\newblock \emph{arXiv e-prints} (2019) arXiv:1908.08372.
	
	\bibitem[Den20]{Den20}
	---{}---{}---.
	\newblock \enquote{{Big Picard theorem and algebraic hyperbolicity for
			varieties admitting a variation of Hodge structures}.}
	\newblock \emph{arXiv e-prints} (2020) arXiv:2001.04426.
	
	\bibitem[GG80]{GG79}
	Mark Green and Phillip Griffiths.
	\newblock \enquote{Two applications of algebraic geometry to entire holomorphic
		mappings.}
	\newblock In \enquote{The {C}hern {S}ymposium 1979 ({P}roc. {I}nternat.
		{S}ympos., {B}erkeley, {C}alif., 1979),} ,  (Springer, New York-Berlin1980)
	41--74.
	
	\bibitem[GGLR19]{GGRC-19}
	Mark Green, Phillip Griffiths, Radu Laza, and Colleen Robles.
	\newblock \enquote{Period mappings and properties of the hodge line bundle.}
	(2019).
	\newblock ArXiv:1708.09523v2.
	
	\bibitem[GK73]{Gri-King-73}
	Phillip Griffiths and James King.
	\newblock \enquote{Nevanlinna theory and holomorphic mappings between algebraic
		varieties.}
	\newblock \emph{Acta Math.} (1973) vol. 130: 145--220.
	\newblock \urlprefix\url{http://dx.doi.org/10.1007/BF02392265}.
	
	\bibitem[Gri68a]{Gri-1}
	Phillip~A. Griffiths.
	\newblock \enquote{Periods of integrals on algebraic manifolds. {I}.
		{C}onstruction and properties of the modular varieties.}
	\newblock \emph{Amer. J. Math.} (1968) vol.~90: 568--626.
	\newblock \urlprefix\url{http://dx.doi.org/10.2307/2373545}.
	
	\bibitem[Gri68b]{Gri-2}
	---{}---{}---.
	\newblock \enquote{Periods of integrals on algebraic manifolds. {II}. {L}ocal
		study of the period mapping.}
	\newblock \emph{Amer. J. Math.} (1968) vol.~90: 805--865.
	\newblock \urlprefix\url{http://dx.doi.org/10.2307/2373485}.
	
	\bibitem[Gri70a]{Gri-3}
	---{}---{}---.
	\newblock \enquote{Periods of integrals on algebraic manifolds. {III}. {S}ome
		global differential-geometric properties of the period mapping.}
	\newblock \emph{Inst. Hautes \'{E}tudes Sci. Publ. Math.} (1970) ~(38):
	125--180.
	\newblock \urlprefix\url{http://www.numdam.org/item?id=PMIHES_1970__38__125_0}.
	
	\bibitem[Gri70b]{Gri-70}
	---{}---{}---.
	\newblock \enquote{Periods of integrals on algebraic manifolds: {S}ummary of
		main results and discussion of open problems.}
	\newblock \emph{Bull. Amer. Math. Soc.} (1970) vol.~76: 228--296.
	\newblock \urlprefix\url{http://dx.doi.org/10.1090/S0002-9904-1970-12444-2}.
	
	\bibitem[JK18]{Java-Kuch}
	Ariyan Javanpeykar and Robert~A. Kucharczyk.
	\newblock \enquote{Algebraicity of analytic maps to a hyperbolic variety.}
	(2018).
	\newblock ArXiv:1806.09338.
	
	\bibitem[Kaw85]{Kawa-85}
	Yujiro Kawamata.
	\newblock \enquote{Minimal models and the {K}odaira dimension of algebraic
		fiber spaces.}
	\newblock \emph{J. Reine Angew. Math.} (1985) vol. 363: 1--46.
	\newblock \urlprefix\url{http://dx.doi.org/10.1515/crll.1985.363.1}.
	
	\bibitem[KL10]{KL}
	S\'{a}ndor~J. Kov\'{a}cs and Max Lieblich.
	\newblock \enquote{Boundedness of families of canonically polarized manifolds:
		a higher dimensional analogue of {S}hafarevich's conjecture.}
	\newblock \emph{Ann. of Math. (2)} (2010) vol. 172~(3): 1719--1748.
	\newblock \urlprefix\url{http://dx.doi.org/10.4007/annals.2010.172.1719}.
	
	\bibitem[Kob98]{Kobayashi}
	Shoshichi Kobayashi.
	\newblock \emph{Hyperbolic complex spaces}, vol. 318 of \emph{Grundlehren der
		Mathematischen Wissenschaften [Fundamental Principles of Mathematical
		Sciences]},  (Springer-Verlag, Berlin1998).
	\newblock \urlprefix\url{http://dx.doi.org/10.1007/978-3-662-03582-5}.
	
	\bibitem[Kol13]{Kollar-13}
	J\'{a}nos Koll\'{a}r.
	\newblock \enquote{Moduli of varieties of general type.}
	\newblock In \enquote{Handbook of moduli. {V}ol. {II},} vol.~25 of \emph{Adv.
		Lect. Math. (ALM)}, 131--157,  (Int. Press, Somerville, MA2013).
	
	\bibitem[Kov03]{Kov-03}
	S\'{a}ndor~J. Kov\'{a}cs.
	\newblock \enquote{Families of varieties of general type: the {S}hafarevich
		conjecture and related problems.}
	\newblock In \enquote{Higher dimensional varieties and rational points
		({B}udapest, 2001),} vol.~12 of \emph{Bolyai Soc. Math. Stud.}, 133--167,
	(Springer, Berlin2003).
	\newblock \urlprefix\url{http://dx.doi.org/10.1007/978-3-662-05123-8_6}.
	
	\bibitem[Lan87]{Lang}
	Serge Lang.
	\newblock \emph{Introduction to complex hyperbolic spaces},  (Springer-Verlag,
	New York1987).
	\newblock \urlprefix\url{http://dx.doi.org/10.1007/978-1-4757-1945-1}.
	
	\bibitem[LSZ]{LSZ}
	Xin Lu, Ruiran Sun, and Kang Zuo.
	\newblock \enquote{Hyperbolicity of moduli space of polarized manifolds with
		good minimal models and kodaira dimension one.}
	\newblock In preparation.
	
	\bibitem[LSZ19]{LSZ19}
	Steven {Lu}, Ruiran {Sun}, and Kang {Zuo}.
	\newblock \enquote{{Nevanlinna Theory on Moduli Space and the big Picard
			Theorem}.}
	\newblock \emph{arXiv e-prints} (2019) arXiv:1911.02973.
	
	\bibitem[Lu91]{Lu-91}
	Steven Shin-Yi Lu.
	\newblock \enquote{On meromorphic maps into varieties of log-general type.}
	\newblock In \enquote{Several complex variables and complex geometry, {P}art 2
		({S}anta {C}ruz, {CA}, 1989),} vol.~52 of \emph{Proc. Sympos. Pure Math.},
	305--333,  (Amer. Math. Soc., Providence, RI1991).
	
	\bibitem[MVZ06]{MVZ-06}
	Martin M\"{o}ller, Eckart Viehweg, and Kang Zuo.
	\newblock \enquote{Special families of curves, of abelian varieties, and of
		certain minimal manifolds over curves.}
	\newblock In \enquote{Global aspects of complex geometry,} 417--450,
	(Springer, Berlin2006).
	\newblock \urlprefix\url{http://dx.doi.org/10.1007/3-540-35480-8_11}.
	
	\bibitem[Nog81]{Nog81}
	Junjiro Noguchi.
	\newblock \enquote{Lemma on logarithmic derivatives and holomorphic curves in
		algebraic varieties.}
	\newblock \emph{Nagoya Math. J.} (1981) vol.~83: 213--233.
	\newblock \urlprefix\url{http://projecteuclid.org/euclid.nmj/1118786486}.
	
	\bibitem[NW14]{NW}
	Junjiro Noguchi and J\"{o}rg Winkelmann.
	\newblock \emph{Nevanlinna theory in several complex variables and
		{D}iophantine approximation}, vol. 350 of \emph{Grundlehren der
		Mathematischen Wissenschaften [Fundamental Principles of Mathematical
		Sciences]},  (Springer, Tokyo2014).
	\newblock \urlprefix\url{http://dx.doi.org/10.1007/978-4-431-54571-2}.
	
	\bibitem[PR07]{PR07}
	Gianluca Pacienza and Erwan Rousseau.
	\newblock \enquote{On the logarithmic {K}obayashi conjecture.}
	\newblock \emph{J. Reine Angew. Math.} (2007) vol. 611: 221--235.
	\newblock \urlprefix\url{https://doi.org/10.1515/CRELLE.2007.080}.
	
	\bibitem[PTW18]{PTW-18}
	Mihnea Popa, Behrouz Taji, and Lei Wu.
	\newblock \enquote{Brody hyperbolicity of base spaces of certain families of
		varieties.} (2018).
	\newblock ArXiv:1801.05898.
	
	\bibitem[Sch73]{Sch-73}
	Wilfried Schmid.
	\newblock \enquote{Variation of {H}odge structure: the singularities of the
		period mapping.}
	\newblock \emph{Invent. Math.} (1973) vol.~22: 211--319.
	\newblock \urlprefix\url{http://dx.doi.org/10.1007/BF01389674}.
	
	\bibitem[{Sib}85]{Sib85}
	Nessim {Sibony}.
	\newblock \enquote{{Quelques probl\`emes de prolongement de courants en analyse
			complexe.}}
	\newblock \emph{{Duke Math. J.}} (1985) vol.~52: 157--197.
	\newblock \urlprefix\url{https://doi.org/10.1215/S0012-7094-85-05210-X}.
	
	\bibitem[Siu75]{Siu75}
	Yum~Tong Siu.
	\newblock \enquote{Extension of meromorphic maps into {K}\"{a}hler manifolds.}
	\newblock \emph{Ann. of Math. (2)} (1975) vol. 102~(3): 421--462.
	\newblock \urlprefix\url{http://dx.doi.org/10.2307/1971038}.
	
	\bibitem[Som73]{Som-73}
	Andrew~J. Sommese.
	\newblock \enquote{Some algebraic properties of the image of the period
		mapping.}
	\newblock \emph{Rice Univ. Studies} (1973) vol.~59~(2): 123--128.
	\newblock Complex analysis, 1972 (Proc. Conf., Rice Univ., Houston, Tex.,
	1972),Vol. II: Analysis on singularities.
	
	\bibitem[SY97]{S-Y97}
	Yum-Tong Siu and Sai-Kee Yeung.
	\newblock \enquote{Defects for ample divisors of abelian varieties, {S}chwarz
		lemma, and hyperbolic hypersurfaces of low degrees.}
	\newblock \emph{Amer. J. Math.} (1997) vol. 119~(5): 1139--1172.
	\newblock
	\urlprefix\url{http://muse.jhu.edu/journals/american_journal_of_mathematics/v119/119.5siu.pdf}.
	
	\bibitem[TY15]{To-Yeung-15}
	Wing-Keung To and Sai-Kee Yeung.
	\newblock \enquote{Finsler metrics and {K}obayashi hyperbolicity of the moduli
		spaces of canonically polarized manifolds.}
	\newblock \emph{Ann. of Math. (2)} (2015) vol. 181~(2): 547--586.
	\newblock \urlprefix\url{http://dx.doi.org/10.4007/annals.2015.181.2.3}.
	
	\bibitem[TY18]{To-Yeung-18}
	---{}---{}---.
	\newblock \enquote{Augmented {W}eil-{P}etersson metrics on moduli spaces of
		polarized {R}icci-flat {K}\"{a}hler manifolds and orbifolds.}
	\newblock \emph{Asian J. Math.} (2018) vol.~22~(4): 705--727.
	\newblock \urlprefix\url{http://dx.doi.org/10.4310/AJM.2018.v22.n4.a6}.
	
	\bibitem[Vie01]{Vieh-01}
	Eckart Viehweg.
	\newblock \enquote{Positivity of direct image sheaves and applications to
		families of higher dimensional manifolds.}
	\newblock In \enquote{School on {V}anishing {T}heorems and {E}ffective
		{R}esults in {A}lgebraic {G}eometry ({T}rieste, 2000),} vol.~6 of \emph{ICTP
		Lect. Notes}, 249--284,  (Abdus Salam Int. Cent. Theoret. Phys.,
	Trieste2001).
	
	\bibitem[Voj11]{Vojta}
	Paul Vojta.
	\newblock \enquote{Diophantine approximation and {N}evanlinna theory.}
	\newblock In \enquote{Arithmetic geometry,} vol. 2009 of \emph{Lecture Notes in
		Math.}, 111--224,  (Springer, Berlin2011).
	\newblock \urlprefix\url{http://dx.doi.org/10.1007/978-3-642-15945-9_3}.
	
	\bibitem[VZ02]{VZ-2}
	Eckart Viehweg and Kang Zuo.
	\newblock \enquote{Base spaces of non-isotrivial families of smooth minimal
		models.}
	\newblock In \enquote{Complex geometry ({G}\"ottingen, 2000),} 279--328,
	(Springer, Berlin2002).
	
	\bibitem[VZ03]{VZ-1}
	---{}---{}---.
	\newblock \enquote{On the {B}rody hyperbolicity of moduli spaces for
		canonically polarized manifolds.}
	\newblock \emph{Duke Math. J.} (2003) vol. 118~(1): 103--150.
	\newblock \urlprefix\url{http://dx.doi.org/10.1215/S0012-7094-03-11815-3}.
	
	\bibitem[Zuc84]{Zucker}
	Steven Zucker.
	\newblock \enquote{Degeneration of {H}odge bundles (after {S}teenbrink).}
	\newblock In \enquote{Topics in transcendental algebraic geometry ({P}rinceton,
		{N}.{J}., 1981/1982),} vol. 106 of \emph{Ann. of Math. Stud.}, 121--141,
	(Princeton Univ. Press, Princeton, NJ1984).
	
	\bibitem[Zuo00]{Zuo-00}
	Kang Zuo.
	\newblock \enquote{On the negativity of kernels of {K}odaira-{S}pencer maps on
		{H}odge bundles and applications.}
	\newblock vol.~4, 279--301 (2000).
	\newblock \urlprefix\url{http://dx.doi.org/10.4310/AJM.2000.v4.n1.a17}.
	\newblock Kodaira's issue.
	
\end{thebibliography}
\end{document}